\title{Representation of Unity by Binary Forms}
\author{Shabnam Akhtari}
\address{Max-Planck-Institut f\"{u}r Mathematik, 
 Vivatsgasse 7,
53111 Bonn,
Germany\newline}
 \email {akhtari@mpim-bonn.mpg.de}
\subjclass[2000]{11D45}
\keywords{Thue Equations, Linear Forms in Logarithms}
\begin{document}

\newtheorem{thm}{Theorem}[section]
\newtheorem{prop}[thm]{Proposition}
\newtheorem{lemma}[thm]{Lemma}
\newtheorem{cor}[thm]{Corollary}
\newtheorem{conj}[thm]{Conjecture}
\begin{abstract}
In this paper, it is shown that if $F(x , y)$ is an irreducible binary form with integral coefficients and degree $n \geq 3$, then provided that the absolute value of the discriminant of $F$ is large enough,  the equation $F(x , y) = \pm 1$ has at most $11n-2$ solutions in integers $x$ and $y$. We will also establish some sharper bounds when more restrictions are assumed. These upper bounds are derived by combining methods from classical analysis and geometry of numbers. The theory of linear forms in logarithms plays an essential role in studying the geometry of our Diophantine equations.

\end{abstract}
\maketitle

\section{Introduction}\label{Intro}

Let $F(x , y) = a_{n}x^n + a_{n-1}x^{n-1}y + \ldots + a_{0} y^n$ be an irreducible binary form with rational integer coefficients and $n \geq 3$. We will study $N_{n}$, the number of solutions to the equation 
\begin{equation}\label{1.2}
F(x , y) = \pm 1,
\end{equation}
   in integers $x$ and $y$.  We will regard $(x , y)$ and $(-x , -y)$ as one solution. So we may only count the solutions with $y\geq 0$. But how large can $N_{n}$ be? Let $p$ be a prime and consider the following irreducible form
$$
F_{1}(x , y) = x^n + p (x - y)(2x - y)\ldots (nx - y).
$$
It is easy to see that $F_{1}(x , y)= 1$ has the following $n$ solutions 
$$(1 , 1),(1 , 2), \ldots , (1 , n).$$
  Thus a linear upper bound of the shape $cn$ is best possible except for the determination of $c$. We will show that
\begin{thm}\label{main}
Let  $F(x , y)$ be an irreducible binary form with integral coefficients and degree $n \geq 3$. Then the Diophantine equation $|F(x , y)| = 1$ has at most $11n-2$ solutions in integers $x$ and $y $, provided that the absolute value of the discriminant of $F$ is greater than $D_{0}$, where $D_{0} = D_{0}(n)$ is an effectively computable constant. 
Moreover, assume that the polynomial $F(x , 1)$ has $r$ real roots and $2s$ non-real roots ($r+2s = n$). Then $|F(x , y)| = 1$ has at most $11r +4s -1$ solutions in integers $x$ and $y$.
\end{thm}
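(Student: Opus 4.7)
The plan is to factor $F$ over its splitting field as $F(x,y) = a_{n}\prod_{i=1}^{n}(x - \alpha_{i}\,y)$, ordering the first $r$ roots to be real and the last $2s$ into $s$ complex conjugate pairs. For any solution $(x,y)$ of $|F(x,y)|=1$ with $|y|\ge 1$, the product relation $\prod_{i}|x-\alpha_{i}y| = 1/|a_{n}|$ forces exactly one factor $|x-\alpha_{i}y|$ to be much smaller than the others. I would attach each solution to the root $\alpha_{i(x,y)}$ realizing this minimum, and derive the theorem by bounding, root by root, how many solutions are attached to each $\alpha_{i}$, plus a short count of small solutions handled directly by geometry of numbers.

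For complex roots the key inequality is $|x - \alpha y|^{2} \ge (\operatorname{Im}\alpha)^{2}\, y^{2}$, which means that attaching $(x,y)$ to a non-real $\alpha$ already bounds $|y|$ via the product relation. A direct lattice-point count in the resulting strip, together with $(x,y) \mapsto (-x,-y)$ identification, should show that each conjugate pair $\{\alpha_{j},\overline{\alpha_{j}}\}$ accommodates at most $4$ solutions, giving the $4s$ contribution.

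For real roots the argument is substantially longer, and I would split the solutions attached to $\alpha_{i}$ into three ranges by the size of $|y|$. For very small $y$, a geometry of numbers argument on the lattice associated with the binary form $F$ controls the count; this is where the large-discriminant hypothesis does its work, since $|\mathrm{disc}(F)| > D_{0}(n)$ forces the successive minima of the relevant lattice apart enough that only a bounded number of lattice points can lie in the Thue-like region. For moderately large $y$, I would use a gap principle of the shape $y_{k+1}\gg y_{k}^{c}$ with $c>1$ between successive rational approximations $x_{k}/y_{k}$ to $\alpha_{i}$, keeping the count in this range bounded by a constant. For the largest $y$, Baker-type lower bounds on linear forms in logarithms of algebraic numbers supply $|x-\alpha_{i}y| \ge y^{-\kappa}$ with effective $\kappa<1$, which interacts with the gap principle to close the iteration. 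A careful bookkeeping of the three ranges should deliver at most $11$ solutions per real root, yielding the $11r$ contribution, and the refined bound $11r+4s-1$ follows by a final argument that shows the attached-root map cannot hit every root with its maximum allowance simultaneously, absorbing a $-1$.

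The main obstacle is achieving the precise constants $11$ and $4$: both the gap principle and Baker's bound leave behind explicit numerical constants that must be swept into the large-discriminant hypothesis, and the three ranges in the real case must be combined without double counting, including the tiny-solution regime. Once $11r+4s-1$ is in hand, the uniform bound $11n-2$ reduces, in view of $11r+4s-1\le 11n-1$ whenever $s>0$, to the totally real case $s=0$, where I expect a separate short argument eliminating one exceptional solution (for instance, ruling out that the attachment map from large solutions is surjective onto all $n$ real roots) to bring $11n-1$ down to $11n-2$.
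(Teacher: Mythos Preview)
Your outline has the right broad shape (attach solutions to roots, stratify by $|y|$, finish with linear forms in logarithms), but it misses the paper's main new ingredient, and without it the large-$|y|$ count cannot be closed with the stated constants. In the paper the small-solution count is \emph{not} done root by root: Stewart's argument gives at most $5(r+s)$ solutions with $0<y\le M(F)^2$ globally, and a Lewis--Mahler gap principle of the standard polynomial type you describe gives at most $2r+s$ in the range $M(F)^2<y<M(F)^{1+(n-1)^2}$. Your proposed ``11 per real root, 4 per complex pair'' decomposition does not match this and would require a separate justification you do not supply.

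The genuine gap is in the top range $y\ge M(F)^{1+(n-1)^2}$. A polynomial gap principle $y_{k+1}\gg y_k^{c}$ combined with a Baker-type bound of the form $|x-\alpha_i y|\ge y^{-\kappa}$ is exactly the Bombieri--Schmidt/Stewart machinery, and it does \emph{not} yield only two large solutions per real root; that is precisely the loss the paper is designed to recover. Instead the paper introduces a logarithmic map
\[
\phi(x,y)=\Bigl(\log\bigl|D^{1/n(n-2)}(x-\alpha_m y)\,/\,f'(\alpha_m)^{1/(n-2)}\bigr|\Bigr)_{m=1}^{n}\in\mathbb{R}^n,
\]
shows that large solutions related to a fixed real $\alpha_i$ lie exponentially close (in $\|\phi\|$) to a fixed line $\mathbf{L}_i$, and deduces an \emph{exponential} gap principle: for three such solutions with $r_1\le r_2\le r_3$, $r_j=\|\phi(x_j,y_j)\|$, one gets $r_3\gg M(F)^{n(n-1)}\exp\bigl(4r_1/(n+1)^2\bigr)$. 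Matveev's theorem is then applied not to $|x-\alpha_i y|$ but to a specific linear form $T_{i,j}$ in logarithms of units, giving a merely polynomial upper bound $r_3\ll r_1^{O(n)}$; comparing the exponential lower bound with this polynomial upper bound forces at most two large solutions per real root (outside an auxiliary set $\mathfrak{A}$ of $2r+2s-2$ solutions with smallest $\|\phi\|$, needed to normalise the Matveev parameters). This exponential-versus-polynomial tension is the heart of the argument and is absent from your sketch.

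Two smaller points. For non-real roots the paper does not do a lattice-point count to get ``4 per pair''; rather it proves directly that any solution related to a non-real $\alpha$ has $|y|$ bounded by an explicit power of $M(F)$, so all such solutions are absorbed into the small/medium ranges already counted. And your final arithmetic is inverted: since $r+2s=n$ one has $11r+4s-1=11n-18s-1$, so it is the totally real case $s=0$ that gives the largest value $11n-1$; the uniform $11n-2$ comes from the paper's actual summation $5(r+s)+(2r+s)+2r$ together with the $\mathfrak{A}$ contribution, not from a separate ``remove one exceptional solution'' step.
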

We remark here that $D_{0}$ can be computed effectively in terms of $n$, the degree of $F$. Indeed, we may take $ D_{0} = 2^{22} (n+1)^{10} n^n $.  Theorem \ref{mat} gives an algorithm to compute $D_{0}$.

In the above theorem, we supposed that $F$ is irreducible. We will see in Section \ref{RF}, that when $F$ is reducible, the situation is simpler. 
 Let $D$ be the discriminant of form $F$ (it is defined in Section \ref{EF}). Note that the condition $|D|> D_{0}(n)$ is a restriction, because we know for binary form $F \in \mathbb{Z}[x , y]$ of degree $n$ and discriminant $D \neq 0$, we have the following sharp bound (see \cite{Gyo23}):
 \begin{equation}\label{nDG}
 n \leq 3 + 2\left( \log |D|\right) / \log 3 .
 \end{equation}

In Section \ref{EF}, we will see how Theorem \ref{main} gives  an upper bound for the number of integral solutions to $F(x , y) = \pm 1$ when $F$ has a small discriminant.

One may conjecture that the number of solutions may be estimated in terms of $r$ the number of real solutions of $F(x , 1) = 0$. This is not the case. For example, let $n$ be an even integer and $p$ a prime number. If we put
$$
F(x , y) = x^{n} + p (x - y)^2 (2x -y)^2 \ldots (\frac{n}{2} x - y)^2
$$
then $F(x , y)$ is irreducible and $F(x , 1) = 0$ has no real root. However, $F(x , y) = 1$ has the following solutions:
$$
(1 , 1), (1, 2), \ldots, (1, \frac{n}{2}).
$$ 
In Proposition \ref{Grp}, we will show that the number of solutions $(x , y)$ with large enough  $y$ can be estimated in terms of $r$.

 In 1909, Thue \cite{Thu} derived the first general sharpening of Liouville's theorem on rational approximation to algebraic numbers, proving, if $\theta$ is algebraic of degree $n \geq 3$ and $\epsilon > 0$, that there exists a constant $c(\theta , \epsilon)$ such that
$$
\left| \theta - \frac{p}{q} \right| > \frac{c(\theta , \epsilon)}{q^{\frac{n}{2} + 1 + \epsilon}}
$$
for all $p \in \mathbb{Z}$ and $q \in \mathbb{N}$. It follows almost immediately, if $F(x , y)$ is an irreducible binary form in $\mathbb{Z}[x , y]$ of degree at least three and $h$ a nonzero integer, that the equation 
\begin{equation}\label{1.2h}
F(x , y) = h
\end{equation}
has only finitely many solutions in integers $x$ and $y$. Equation (\ref{1.2h}) is called a {\emph Thue equation}. 

For any nonzero integer $h$ let $\omega(h)$ denote the number of distinct prime factors of $h$. In 1933, Mahler \cite{Mah23} proved that equation (\ref{1.2h}) has at most $C_{1}^{1 + \omega(h)}$ solutions in co-prime integers $x$ and $y$, where $C_{1}$ is a positive number that depends on $F$ only.
In 1987, Bombieri and Schmidt \cite{Bom} showed that the number of solutions of $F(x , y) = h$ in co-prime integers $x$ and $y$ is at most 
$$
C_{2} \,  n^{1 + \omega(h)},
$$
where $C_{2}$ is an absolute constant. Further they showed that $C_{2}$ may be taken $215$ if $n$ is sufficiently large. Note that this upper bound is independent of the coefficients of the form $F$; a result of this flavour was first deduced in 1983 by Evertse \cite{Eve}.
In the introduction of \cite{Bom}, Bombieri and Schmidt comment that their argument can be used to prove a more general result. For example,  if $N_{n}$ is the corresponding bound in the special case $h = 1$, one obtains $N_{n}n^{\omega(h)}$ as a bound in the general case. For this reason we will focus on the equation $|F(x , y)| = 1$.

The effective solution of an arbitrary Thue equation has its origin in Baker's \cite{Bak} theorem that says that if $\kappa > n+1$, then every integer solution $(x , y)$ of equation (\ref{1.2h}) satisfies
$$
\max \{ |x|, |y|\} < C_{3} \exp{\log ^{\kappa} |h|}
$$
where $C_{3}$ is an effectively computable constant depending only on $n$, $\kappa$ and the coefficients of $F$.

Evertse and Gy\H{o}ry (see \cite{Eve} and \cite{EG16}) have studied the Thue inequality
\begin{equation}\label{1.2ih}
0 < |F(x , y)| \leq h.
\end{equation} 
Define, for $ 3 \leq n < 400$
$$
\left( N(n) , \delta(n) \right) = 
\left( 6 n 7^{\binom{n}{3}},\,  \frac{5}{6} n (n-1) \right) $$
and for $n > 400$
$$
\left( N(n) , \delta(n) \right) = \left( 6n , \, 120(n -1)\right). 
$$
They prove that if
$$
|D| > h^{\delta{n}} \exp (80 n (n -1)),
$$
then the number of solutions to (\ref{1.2ih}) in co-prime integers $x$ and $y$ is at most $N(n)$.

Gy\H{o}ry \cite{Gyo1} also shows, for binary form $F$ of degree $n \geq 3$, that if $0 <a < 1$ and 
$$
\left| D \right| \geq n^n (3.5^{n} h^2)^{\left(2(n-1)/(1-a)\right)},
$$
then
the number of solutions to (\ref{1.2ih}) in co-prime integers $x$ and $y$ is
at most $25n + (n+2) \left(\frac{2}{a} + \frac{1}{4}\right)$, and if $F$ is reducible then at most $5n + (n+2) \left(\frac{2}{a} + \frac{1}{4}\right)$.

A great reference in this field is a work of  Stewart \cite{Ste}. We will  follow many arguments from \cite{Ste} here. A consequence 
 of  Stewart's main theorem in \cite{Ste} is that if the discriminant $D$ of $F$ is non-zero and 
$$
|D|^{1/n(n-1)} \geq |h|^{\frac{2}{n+\epsilon}},
$$
then the number of pairs of co-prime integers $(x , y)$ for which $F(x , y) = h$ holds is at most 
$$
1400 \left( 1 + \frac{1}{8 \epsilon n}\right) n.
$$

Bennett \cite{Ben} and Okazaki \cite{Oka} have obtained very good upper bounds for the number of solutions to cubic Thue equations. Some upper bounds are given for the number of integral solutions to quartic Thue equations in \cite{Akh0} and \cite{AO1}. Throughout this paper we may assume $n$, the degree of our binary form, is greater than $4$. 

We will use methods from \cite{Ste} to give upper bounds on the number of ``small'' solutions to (\ref{1.2}). Then, in Section \ref{LC}, we will generalize some ideas from \cite{Oka, AO1} to associate a transcendental curve $\phi(x , y)$ to the binary form $F(x , y)$. Introducing this curve will give us the opportunity to bring the theory of linear forms in logarithms in.

\section{Reducible Forms}\label{RF}
Let us take a brief interlude from the principal matter at hand to discuss the much simpler situation where the form $F(x , y)$ is reducible over $\mathbb{Z}[x, y]$. In general, equation (\ref{1.2}) may have infinitely many integral solutions; $F(x , y)$ could, for instance, be a power of a linear or indefinite binary quadratic form that represents unity. If $F(x, y)$ is a reducible form, however, we may very easily derive a stronger version of our main theorem under the assumption that $F(x , 1)$ has at least two distinct zeros.

Suppose that $F(x , y)$ is reducible and can be factored over $\mathbb{Z}[x , y]$ as follows
$$ 
F(x , y) = F_{1}(x , y) F_{2}(x , y),
$$
with $\textrm{deg}(F_{1}) \leq \textrm{deg}(F_{2})$ and $F_{1}$ irreducible over $\mathbb{Z}[x , y]$. Therefore, the following equations must be satisfied:
\begin{equation}\label{e1}
F_{1}(x , y) = \pm 1
\end{equation}
and  
\begin{equation}\label{e2}
F_{2}(x , y) = \pm 1.
\end{equation}
This means the number of solutions to (\ref{1.2}) is no more than the minimum of number of solutions to (\ref{e1}) and (\ref{e2}). 

First suppose that $F_{1}$ is a linear form. Then the equation (\ref{e2}) can be written as a polynomial of degree at most $n-1$ in $x$ and therefore there are no more than $2(n-1)$ complex solutions to above equations. 
 
Now let us suppose that $F_{1}$ is a quadratic form. Using B\'ezout's theorem from classical  algebraic geometry concerning the number of common points of two plane algebraic curves, we conclude that (\ref{1.2}) has at most $4 (n-2)$ integral solutions.


If $\textrm{deg}(F_{1}) \geq 3$ then Theorem \ref{main} will give us an upper bound for the number of integral solutions to (\ref{e1}), and therefore to (\ref{1.2}).

\section{Equivalent Forms}\label{EF}
 Our approach depends on the fact that if we transform $F$ by the action of an element of $GL_{2}(\mathbb{Z})$ the problem of counting solutions remains unchanged, while the Diophantine approximation properties of $F$ can change very drastically.
Let 
$$
A = \left( \begin{array}{cc}
a & b \\
c & d \end{array} \right)
$$
and define the binary form $F_{A}$ by
$$
F_{A}(x , y) = F(ax + by \ ,\  cx + dy).$$
If the determinant of matrix $A$ is equal to $\pm 1$ then we say that $F_{A}$ and $-F_{A}$ are equivalent to $F$.

Suppose that  $A \in GL_{2}(\mathbb{Z})$ and $(x , y)$ is a solution of (\ref{1.2}) in integers $x$ and $y$. Then 
$$A \left( \begin{array}{c} 
x\\
y
\end{array} \right)= \left( \begin{array}{c} 
ax+by\\
cx+dy
\end{array} \right)$$
and $(ax+by , cx+dy)$ is a solution of $F_{A^{-1}}(x , y) = \pm 1$ in integers $x$ and $y$.

Let $F$ be a binary form that factors in $\mathbb{C}$ as
$$
\prod_{i=1}^{n} (\alpha_{i} x - \beta_{i}y).
$$
The discriminant $D_{F}$ of $F$ is given by
$$
D_{F} = \prod_{i<j} (\alpha_{i} \beta_{j} - \alpha_{j}\beta_{i})^2.
$$
Observe that for any $2 \times 2$ matrix $A$ with integer entries
\begin{equation}\label{St6}
D_{F_{A}} = (\textrm{det} A)^{n (n-1)} D_{F}.
\end{equation}

 We denote by $N_{F}$ the number of solutions in integers $x$ and $y$ of the Diophantine equation (\ref{1.2}). If $F_{1}$ and $F_{2}$ are equivalent  then $N_{F_{1}} = N_{F_{2}}$  and  $D_{F_{1}} = D_{F_{2}}$. 
 
Let $p$ be a prime number and put 
$$
A_{0} = \left( \begin{array}{cc}
p & 0 \\
0 & 1 \end{array} \right)\, , \qquad 
A_{j} = \left( \begin{array}{cc}
0 & -1 \\
p & j \end{array} \right)
$$
for $j = 1, \ldots, p$. Then we have
$$
\mathbb{Z}^2 = \cup_{j=0}^{p} A_{j} \mathbb{Z}^2.
$$
 Therefore the number of solutions of (\ref{1.2}) is at most $N_{F_{0}} + N_{F_{1}} + \ldots + N_{F_{p}}$, where
$$
F_{j} (x , y) = F_{A_{j}}(x , y).
$$
Note that by (\ref{St6}), 
$$
\left| D_{F_{A_{j}}}\right| \geq p^{n(n-1)}.
$$
 Therefore, if $N$ is an upper bound for the number of solutions to (\ref{1.2}) for binary forms $F$ with $|D_{F}| \geq p^{n(n-1)}$ then  $(p + 1)N$ will be an upper bound for the number of solutions to $|F(x , y)| = 1$ when $F$ has a nonzero discriminant. 

Assume  that $F(x , y) = \pm 1$ has a solution $(x_{0} , y_{0})$. Then there is a matrix $A$ in $GL_{2}(\mathbb{Z})$ for which 
$A^{-1} (x _{0} , y_{0})$ is $(1 , 0)$. Therefore, $(1 , 0)$ is a solution to $$F_{A}( x , y) = \pm 1.$$
 We conclude that either $F_{A}$ or $-F_{A}$ is a monic form. From now on we will assume that the binary form $F(x , y)$ in Theorem \ref{main} is monic.

\section{Heights}\label{Hei}

 In this section we give a brief review of the theory of height functions of polynomials and binary forms.

For the polynomial $G(x) = c (x - \beta_{1})\ldots  (x - \beta_{n})$ with $c \neq 0$, the Mahler measure $M(G)$ is defined by
$$
M(G) = |c| \prod_{i =1}^{n} \max(1 , \left| \beta_{i}\right|).
$$
 Mahler \cite{Mah} showed, for polynomial $G$ of degree $n$ and discriminant $D$, that
\begin{equation}\label{mahD5}
 M(G) \geq \left(\frac{D}{n^n}\right)^{\frac{1}{2n -2}}.
 \end{equation}
The Mahler measure of an algebraic number $\alpha$ is defined as the Mahler measure of the minimal polynomial of $\alpha$ over Q.


For an algebraic number $\alpha$,  the (naive) height of $\alpha$, denoted by $H(\alpha)$, is defined by the following identities.
$$
H(\alpha) =  H\left(f(x)\right) =\max  \left( |a_{n}|, |a_{n-1}|, \ldots , |a_{0}|\right)
$$
where $f(x) = a_{n}x^{n} + \ldots + a_{1}x + a_{0}$ is the minimal polynomial of $\alpha$ over $\mathbb{Z}$.

We have 
\begin{equation}\label{Lan5}
  {n \choose \lfloor n/2\rfloor}^{-1} H(\alpha) \leq M(\alpha) \leq (n+1)^{1/2} H(\alpha).
\end{equation}
We will use transformations in $GL_{2}(\mathbb{Z})$ to dispense with a technical hypothesis about the height of $F$.
We call  the polynomials $f(x)$ and $f^{*}(x) \in \mathbb{Z}$ strongly equivalent if $f^{*}(x) = f(x + a)$ for some $a \in \mathbb{Z}$.
 Two algebraic integers $\alpha$ and $\alpha'$ are called strongly equivalent if their minimal polynomials are strongly equivalent.

 \begin{prop}\label{gyor5}(Gy\H{o}ry \cite{Gyo23})
 Suppose that $f(x)$ is a monic polynomial in $\mathbb{Z}[x]$ with degree $n \geq 2$ and non-zero discriminant $D$. There is a
 polynomial $f^{*}(x) \in \mathbb{Z}$ strongly equivalent to $f(x)$ so that 
 $$
 H\left(f^{*}(x)\right) < \exp\{ n^{4n^{12}} |D|^{6n^8}\} < \exp \exp \{4\left( \log |D|\right)^{13}\}.
$$
 \end{prop}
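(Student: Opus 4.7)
The strategy is to find an integer $a$ for which the shifted roots $\alpha_i - a$ are uniformly small in absolute value, and then apply Vieta's formulas to bound the coefficients. Writing $f(x) = \prod_{i=1}^n(x - \alpha_i)$ with $\alpha_i = u_i + iv_i \in \mathbb{C}$, the coefficients of $f^*(x) = f(x+a)$ are, up to sign, the elementary symmetric polynomials in $\alpha_i - a$, which gives
\[
H(f^*) \leq \binom{n}{\lfloor n/2\rfloor}\max_i\bigl(1,\,|\alpha_i - a|\bigr)^n.
\]
Since translation by $a \in \mathbb{Z}$ leaves the discriminant invariant and acts only on the real parts $u_i \mapsto u_i - a$, the imaginary parts $v_i$ must be controlled intrinsically in terms of $|D|$.

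I would accordingly split the argument into two parts. First, bound $\max_i |v_i|$ by exploiting that the non-real roots come in complex-conjugate pairs, each contributing $(2iv_j)^2$ as a factor of $D$; lower bounds on the remaining factors of the discriminant product follow from Mahler's inequality (\ref{mahD5}) applied to $f$ and to its natural subfactors, after separating off the close conjugate pairs. Second, with the $v_i$'s now bounded, control the spread $\max_i u_i - \min_i u_i$ analogously: since $|u_i - u_j|\leq |\alpha_i - \alpha_j|$ and individual root differences can be upper-bounded in terms of $|D|$ and the Mahler measure via (\ref{mahD5}) combined with (\ref{Lan5}) and trivial estimates, one obtains an upper bound for the spread. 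Taking $a$ to be the integer nearest to $\min_i u_i$ then forces $|\alpha_i - a|$ to be bounded uniformly by a function of $n$ and $|D|$, which plugs into the symmetric-function inequality above to yield the claimed height bound on $f^*$.

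The main obstacle is that the constants in such iterated Mahler-type estimates compound quickly: each application loses at least a factor of $n^n$ and a small power of $|D|$, and the bound on $\max_i |v_i|$ must feed into the bound on the real-part spread, which then feeds back into the final coefficient estimate. Tracking these dependencies explicitly is exactly what produces the double-exponential shape $\exp(n^{4n^{12}}|D|^{6n^8})$. Here I would follow Győry's detailed analysis in \cite{Gyo23} rather than attempt to reinvent or optimize the constants, since all that is needed for the later application is any effective bound of this rough form.
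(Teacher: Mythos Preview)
The paper does not prove this proposition at all; it is simply quoted from Gy\H{o}ry \cite{Gyo23}, so there is no ``paper's own proof'' to compare against. What can be assessed is whether your sketch is a viable route to the result, and it is not.

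Both of your steps suffer from the same circularity. To bound $\max_i |v_i|$ from the factor $(2iv_j)^2$ of $D$, you must \emph{lower}-bound the remaining factors $|\alpha_k-\alpha_l|$ of the discriminant; likewise, to upper-bound an individual $|\alpha_i-\alpha_j|$ (and hence the real spread) from $|D|$, you again need lower bounds on all the other root differences. The only tool you cite for this, Mahler's Lemma~\ref{mahl5}, gives $|\alpha_k-\alpha_l|\ge \sqrt{3}\,(n+1)^{-n}M(f)^{-(n-1)}$, which puts a large positive power of $M(f)$ into your upper bounds: you end up with estimates of the shape $|v_j|,\ |u_i-u_j|\ \ll\ |D|^{1/2}\,M(f)^{c_n}$. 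These are useless without an \emph{a priori} upper bound on $M(f)$---but $M(f)$ (equivalently $H(f)$) is precisely the quantity you are trying to control, and neither \eqref{mahD5} nor \eqref{Lan5} gives any upper bound for it in terms of $|D|$. So the argument never closes.

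Gy\H{o}ry's actual proof in \cite{Gyo23} is not elementary in this sense: it relies on his effective finiteness results for algebraic integers of given discriminant, which in turn rest on Baker's theory of linear forms in logarithms. That deep input is exactly why the bound has the tower shape $\exp\{n^{4n^{12}}|D|^{6n^8}\}$. Your final sentence---deferring to \cite{Gyo23}---is therefore the correct disposition; the preceding two-step ``strategy'' should be dropped rather than offered as the plan.
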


For polynomial  $f(x) = a_{n}x^{n} + \ldots + a_{1}x + a_{0}$ with degree $n$ and integer coefficients, put 
$$
L(f) = \left|a_{n}\right|+ \ldots + \left|a_{1}\right| + \left|a_{0}\right|.
$$ 
Mahler \cite{Mah3} showed that
\begin{equation}\label{ML}
2^{-n}L(f) \leq M(f) \leq L(f).
\end{equation}

Define  the absolute logarithmic height of an algebraic number as follows. Let $\alpha_{1}$ be a root of $F(x , 1) = 0$ and $\mathbb{Q}(\alpha)^{\sigma}$  the embeddings of  $\mathbb{Q}(\alpha)$ in $\mathbb{C}$,  $1 \leq \sigma \leq n$.  For   $ \rho \in  \mathbb{Q}(\alpha)$,  we respectively have $n$ Archimedean valuations of $\mathbb{Q}(\alpha)$:
$$
|\rho|_{\sigma} = \left|\rho^{(\sigma)}\right| , \   \   1 \leq \sigma \leq n.
$$
We enumerate simple ideals of $\mathbb{Q}(\alpha)$ by indices $\sigma > n$ and define
 non-Archimedean valuations of $\mathbb{Q}(\alpha)$ by the formulas
  $$
   |\rho | _{\sigma} = (  \textrm{Norm} \  \mathfrak{p})^{-k}, 
   $$ 
 where
 $$\
  k = \textrm{ord}_ {\mathfrak{p}} (\alpha) , \   \mathfrak{p} = \mathfrak{p}_{\sigma} , \   \sigma > n ,
  $$
 for any $\rho \in \mathbb{Q}(\alpha)^{*}$.  Then we have the \emph{product formula} :
 $$
  \prod_{1}^{\infty} |\rho|_{\sigma} = 1 ,  \   \rho \in  \mathbb{Q}(\alpha)^{*} .
  $$
  Note that $|\rho|_{\sigma} \neq 1$ for only finitely many $\rho$. We should also remark that if $\sigma_{2} = \bar{\sigma}_{1}$, i.e., 
$$\sigma_{2}(x) = \bar{\sigma}_{1}(x) \qquad \textrm{for} \qquad  x \in \mathbb{Q}(\alpha),
$$
then the valuations $|\, .\, |_{\sigma_{1}}$ and $|\, .\, |_{\sigma_{2}}$ are identical.
 We define the \emph{absolute logarithmic height} of $\alpha$ as
 $$
 h(\alpha) = \frac{1}{2n} \sum _{\sigma = 1}^{\infty}\left|\log |\alpha|_{\sigma}\right| . 
 $$
 This height is called absolute because it is independent of the field in which the number  $\alpha$ lies.

The following Lemmata about the height of algebraic numbers will be helpful later.

\begin{lemma}\label{p31}
For every non-zero algebraic number $\alpha$, we have $h(\alpha^{-1}) = h(\alpha)$. For algebraic numbers $\alpha_{1}, \ldots, \alpha_{n}$, we have 
\begin{equation*}\label{bj15}
h(\alpha_{1} \ldots \alpha_{n}) \leq h(\alpha_{1})+ \ldots + h(\alpha_{n})
\end{equation*}
and
\begin{equation*}\label{bj25}
h(\alpha_{1} + \ldots + \alpha_{n}) \leq \log n + h(\alpha_{1})+ \ldots + h(\alpha_{n}).
\end{equation*}
\end{lemma}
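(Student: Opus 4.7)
The first identity is immediate from the definition: for every valuation $|\alpha^{-1}|_{\sigma} = |\alpha|_{\sigma}^{-1}$, so $|\log|\alpha^{-1}|_{\sigma}| = |-\log|\alpha|_{\sigma}| = |\log|\alpha|_{\sigma}|$ term by term, and summing gives $h(\alpha^{-1}) = h(\alpha)$.

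For the other two inequalities, I would first translate the paper's formula into the more familiar ``positive part'' form. Setting $\log^{+} t = \max(\log t, 0)$, one has $|\log t| = \log^{+} t + \log^{+} t^{-1}$ for $t > 0$. The product formula gives $\sum_{\sigma} \log |\alpha|_{\sigma} = 0$, hence $\sum_{\sigma} \log^{+} |\alpha|_{\sigma} = \sum_{\sigma} \log^{+} |\alpha^{-1}|_{\sigma}$, and consequently $h(\alpha) = \frac{1}{n} \sum_{\sigma} \log^{+} |\alpha|_{\sigma}$. Because $h$ is absolute, I may embed $\alpha_{1}, \ldots, \alpha_{n}$ into a single number field $K$ of some degree $N$ and express $h(\beta) = \frac{1}{N} \sum_{v} [K_{v} : \mathbb{Q}_{v}] \log^{+} |\beta|_{v}$, where $v$ runs over the places of $K$, with the local degrees summing to $N$ over the archimedean places.

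The product bound then follows from multiplicativity of each valuation together with subadditivity of $\log^{+}$: at each place $v$, $\log^{+}|\alpha_{1} \cdots \alpha_{n}|_{v} \leq \sum_{i} \log^{+}|\alpha_{i}|_{v}$. Summing with the weights $[K_{v}:\mathbb{Q}_{v}]/N$ gives $h(\alpha_{1} \cdots \alpha_{n}) \leq \sum_{i} h(\alpha_{i})$. For the sum bound, I would split into two cases. At a non-archimedean place, the ultrametric inequality yields $|\alpha_{1} + \cdots + \alpha_{n}|_{v} \leq \max_{i} |\alpha_{i}|_{v}$, hence $\log^{+}|\alpha_{1} + \cdots + \alpha_{n}|_{v} \leq \sum_{i} \log^{+}|\alpha_{i}|_{v}$. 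At an archimedean place, the triangle inequality gives $|\alpha_{1} + \cdots + \alpha_{n}|_{v} \leq n \max_{i} |\alpha_{i}|_{v}$, hence $\log^{+}|\alpha_{1} + \cdots + \alpha_{n}|_{v} \leq \log n + \sum_{i} \log^{+}|\alpha_{i}|_{v}$. Summing with weights and using $\sum_{v \mid \infty} [K_{v}:\mathbb{Q}_{v}]/N = 1$ produces exactly one $\log n$ in the final bound.

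The main obstacle is merely notational: one has to carefully pass between the paper's definition (which uses $|\log|\cdot|_{\sigma}|$ with the prefactor $\frac{1}{2n}$) and the standard $\log^{+}$ definition with weights $[K_{v}:\mathbb{Q}_{v}]/[K:\mathbb{Q}]$, checking that either normalization yields the same constants and behaves correctly under changes of field. The inequalities themselves are direct consequences of the ordinary triangle and ultrametric inequalities.
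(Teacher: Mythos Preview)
Your argument is correct and is the standard proof of these height inequalities. The paper itself does not supply a proof at all: it simply writes ``See \cite{Coh} for a proof'' and defers to Cohen's textbook, so there is nothing substantive to compare against. What you have written is essentially the argument one finds in that reference, including the reduction from the paper's $\frac{1}{2n}\sum |\log|\cdot|_{\sigma}|$ normalization to the usual $\log^{+}$ formulation via the product formula, and the place-by-place use of multiplicativity, the ultrametric inequality, and the archimedean triangle inequality.
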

\begin{proof}
See \cite{Coh} for a proof.
\end{proof}

\begin{lemma}\label{hM5}(Voutier \cite{Vou})
 Suppose $\alpha$ is a non-zero algebraic number of degree $n$ which is not a root of unity. If $n \geq 2$ then
 \begin{equation*}
 h(\alpha) = \frac{1}{n} \log M(\alpha)>\frac{1}{4n} \left(\frac{\log \log n}{\log n}\right)^3.
 \end{equation*}
 \end{lemma}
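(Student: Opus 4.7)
The statement has two parts: the identity $h(\alpha)=\frac{1}{n}\log M(\alpha)$ and the lower bound. The identity is routine from the definition. Write the minimal polynomial as $f(x)=a_n(x-\alpha_1)\cdots(x-\alpha_n)\in\mathbb{Z}[x]$. The product formula applied to $\alpha$ gives $\sum_\sigma\log|\alpha|_\sigma=0$, so
$$h(\alpha)=\frac{1}{2n}\sum_\sigma\bigl|\log|\alpha|_\sigma\bigr|=\frac{1}{n}\sum_\sigma\log^+|\alpha|_\sigma.$$
The $n$ archimedean valuations contribute $\sum_i\log^+|\alpha_i|$, while a standard computation (using the factorization of the ideal $(a_n)$ through the Newton polygon of $f$) shows the non-archimedean places contribute exactly $\log|a_n|$. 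Together these give $nh(\alpha)=\log|a_n|+\sum_i\log^+|\alpha_i|=\log M(\alpha)$.

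The lower bound is the deep content: it is a quantitative form of Lehmer's problem, due in the shape stated here to Voutier, refining earlier work of Dobrowolski. I would follow Dobrowolski's scheme. \textbf{Step 1 (auxiliary integer).} Pick a parameter $N$, eventually chosen of size roughly $(\log n/\log\log n)^2$, and for each prime $p$ in a suitable interval around $N$, form
$$R_p=a_n^{\,np}\prod_{i,j}(\alpha_i^{p}-\alpha_j)=\pm\,\mathrm{Res}\bigl(f(x),\,f(x^p)\bigr)\in\mathbb{Z}.$$
\textbf{Step 2 (Frobenius divisibility).} For primes $p$ unramified in the splitting field of $f$, a Frobenius $\mathfrak p$ above $p$ acts on the $\alpha_i$ as a permutation $\pi$ with $\alpha_i^{p}\equiv\alpha_{\pi(i)}\pmod{\mathfrak p}$. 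Taking $j=\pi(i)$ in the product forces $\mathfrak p^{n}\mid\prod_{i,j}(\alpha_i^p-\alpha_j)$; summing over primes above $p$ and over the chosen $p$'s yields a large $p$-adic valuation of $\prod_p R_p$. \textbf{Step 3 (non-vanishing).} Because $\alpha$ is not a root of unity, no $\alpha_i^{p}$ equals any $\alpha_j$, so each $R_p\neq0$ and $|R_p|$ is at least the product of the required prime powers.

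\textbf{Step 4 (upper bound via $M(\alpha)$).} The elementary estimate $|\alpha_i^{p}-\alpha_j|\leq 2\max(1,|\alpha_i|)^{p}\max(1,|\alpha_j|)$ gives $|R_p|\leq 2^{n^2}M(\alpha)^{n(p+1)}$. \textbf{Step 5 (combine).} Matching the lower bound from Steps~2--3 against this upper bound, with $N$ optimized so that the prime number theorem guarantees enough primes in the interval, yields a contradiction unless $\log M(\alpha)$ exceeds a constant multiple of $(\log\log n/\log n)^3$, whence the claim after dividing by $n$.

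The main obstacle is the sharp constant $\frac14$. Carrying out the preceding five steps naively reproduces Dobrowolski's weaker bound $c(\log\log n/\log n)^3/n$ with a much smaller $c$; Voutier's improvement requires (i) a more careful accounting of the Frobenius divisibility in Step 2, tracking multiplicities when the reductions $\bar\alpha_i$ coincide, and (ii) a sharper averaging in Step 5 over primes $p$ in the chosen interval, exploiting that the exponent $p$ appears multiplicatively in the upper bound but only additively in the lower bound. The identity part is essentially bookkeeping and poses no difficulty.
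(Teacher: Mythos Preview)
The paper does not prove this lemma at all: it is stated with attribution to Voutier \cite{Vou} and used as a black box. So there is no ``paper's own proof'' to compare against; the paper's approach is simply to cite the literature.

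Your proposal is a correct sketch of the Dobrowolski method that underlies Voutier's result, and you correctly identify that reaching the explicit constant $\tfrac14$ requires Voutier's refinements rather than the original argument. One small remark on the identity part: the paper's definition of $h(\alpha)$ is via the formula $\frac{1}{2n}\sum_\sigma|\log|\alpha|_\sigma|$ over valuations of $\mathbb{Q}(\alpha)$, and your reduction to $\frac{1}{n}\sum_\sigma\log^+|\alpha|_\sigma$ via the product formula is the standard one; the contribution of the non-archimedean places via the leading coefficient is correct but worth noting that in the paper's applications (Theorem~\ref{exg}) the relevant $\alpha$'s are units, so $a_n=1$ and only the archimedean part matters. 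Since the paper treats this as a cited result, your level of detail already exceeds what the paper provides.
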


 \begin{lemma}\label{mahl5}(Mahler \cite{Mah})
 If $a$ and $b$ are distinct zeros of polynomial $P(x)$ with degree $n$, then we have
\begin{equation*}
  | a - b | \geq \sqrt{3} (n+1)^{-n} M(P)^{-n+1},
  \end{equation*}
 where $M(P)$ is the Mahler measure of $P$.
  \end{lemma}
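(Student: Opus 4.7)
The plan is to derive the inequality by combining Hadamard's inequality applied to the Vandermonde matrix with the classical identity relating the discriminant to products of root differences. Writing $P(x) = a_n\prod_{i=1}^n(x-\alpha_i)$ and setting $a=\alpha_k$, $b=\alpha_\ell$, we have $|D(P)| = |a_n|^{2n-2}|a-b|^2\prod_{(i,j)\neq(k,\ell)}|\alpha_i-\alpha_j|^2$, so a lower bound on $|a-b|$ reduces to an upper bound on the remaining product in terms of $M(P)$, together with the standard lower bound $|D(P)|\geq 1$ available when $P$ has integer coefficients and nonzero discriminant.

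To obtain such an upper bound, I would consider the Vandermonde matrix $V$ with $V_{ij}=\alpha_i^{j-1}$, whose determinant equals $\pm\prod_{i<j}(\alpha_j-\alpha_i)$. Replacing row $\ell$ by row $\ell$ minus row $k$ factors out $\alpha_\ell-\alpha_k$ exactly, since $\alpha_\ell^{j-1}-\alpha_k^{j-1}=(\alpha_\ell-\alpha_k)\sum_{m=0}^{j-2}\alpha_\ell^m\alpha_k^{j-2-m}$. Writing the resulting matrix as $W$, one obtains $|\det W| = \prod_{(i,j)\neq(k,\ell)}|\alpha_i-\alpha_j|$. Hadamard's inequality applied to $W$ yields $|\det W|^2\leq\prod_i\|W_i\|^2$, where $\|W_i\|^2\leq n\max(1,|\alpha_i|)^{2(n-1)}$ for $i\neq\ell$ (the standard Vandermonde row bound) and $\|W_\ell\|^2\leq (n^3/3)\max(1,|\alpha_k|)^{2(n-2)}\max(1,|\alpha_\ell|)^{2(n-2)}$ for the modified row, the latter using the entry bound $(j-1)\max(|\alpha_k|,|\alpha_\ell|)^{j-2}$ and the summation $\sum_{j=1}^{n-1}j^2\leq n^3/3$.

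Multiplying the row-norm bounds, using $M(P)=|a_n|\prod_i\max(1,|\alpha_i|)$, and symmetrizing the asymmetric factors involving $\max(1,|\alpha_k|)$ and $\max(1,|\alpha_\ell|)$ by running the argument with the roles of $k$ and $\ell$ interchanged, yields $|\det W|^2\leq (n^{n+2}/3)M(P)^{2n-2}/|a_n|^{2n-2}$. Substituting into $|a-b|^2 = |D(P)|/(|a_n|^{2n-2}|\det W|^2)$ and invoking $|D(P)|\geq 1$ gives $|a-b|\geq\sqrt{3}\,n^{-(n+2)/2}M(P)^{-(n-1)}$; since $n^{(n+2)/2}\leq (n+1)^n$ for $n\geq 2$, the stated bound follows. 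The principal obstacle is extracting the sharp constant $\sqrt{3}$ from the norm of the modified Hadamard row: it arises precisely from the estimate $\sum_{j=1}^{n-1}j^2\leq n^3/3$, and any cruder step (for instance, replacing $(j-1)$ by $n$) would destroy it. A secondary technicality is handling the asymmetry in $\alpha_k,\alpha_\ell$ introduced by the row operation, resolved by the symmetrization above.
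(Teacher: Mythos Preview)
The paper does not supply its own proof of this lemma; it is quoted directly from Mahler's 1964 paper \cite{Mah}. Your sketch is in fact a faithful reconstruction of Mahler's original argument: apply Hadamard's inequality to the Vandermonde matrix after a single row operation that extracts the factor $\alpha_\ell-\alpha_k$, then bound the row norms and invoke $|D(P)|\ge 1$ (valid here because the ambient forms have integer coefficients and nonzero discriminant; Mahler's general statement carries the factor $\sqrt{|D|}$, which the paper has suppressed for this reason).

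One small sharpening of your write-up: the ``symmetrization'' step is unnecessary. Since the problem is symmetric in $a$ and $b$, you may assume at the outset that $|\alpha_\ell|\ge|\alpha_k|$ and perform the row operation on row $\ell$. Then $\max(1,|\alpha_k|,|\alpha_\ell|)=\max(1,|\alpha_\ell|)$, so the modified row satisfies
\[
\|W_\ell\|^2\le \tfrac{n^3}{3}\max(1,|\alpha_\ell|)^{2(n-2)}\le \tfrac{n^3}{3}\max(1,|\alpha_\ell|)^{2(n-1)},
\]
which combines cleanly with the unchanged rows $\|W_i\|^2\le n\max(1,|\alpha_i|)^{2(n-1)}$ to give $|\det W|^2\le (n^{n+2}/3)\,(M(P)/|a_n|)^{2(n-1)}$ directly, with no residual asymmetric factor to manage. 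The rest of your computation, including the constant $\sqrt{3}$ from $\sum_{j=1}^{n-1}j^2\le n^3/3$ and the comparison $n^{(n+2)/2}\le(n+1)^n$, is correct.
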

   
 In the following lemma  we approximate the size of $f'(\alpha)$ in terms of the discriminant and heights of $f$ , where $f'$ is the derivative of the polynomial $f$ and $\alpha$ is a root of $f = 0$.

  \begin{lemma}\label{esug}
 Let $f(x) = a_{n}x^{n} + \ldots + a_{1}x + a_{0}$ be an irreducible  polynomial of degree $n$ and with integral coefficients.  Suppose that  $\alpha_{m}$ is a root of $f(x) = 0$. For $f'(x)$ the derivative of $f$, we have
$$
2^{-(n-1)^2} \frac{\left|D_{f}\right|}{M(f)^{2n -2}}\leq    |f'(\alpha_{m})| \leq \frac{n(n+1)}{2} H(f) \left(\max (1 , |\alpha_{m}|)\right)^{n-1},
$$
where $D_{f}$ is the discriminant, $M(f)$ is the Mahler measure and $H(f)$ is the naive height of $f$.
 \end{lemma}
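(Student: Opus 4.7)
The upper bound is the easy half. Writing $f'(x) = \sum_{k=1}^{n} k a_k x^{k-1}$, I would apply the triangle inequality, pull out $\max(1,|\alpha_m|)^{n-1}$ as a common majorant for each $|\alpha_m|^{k-1}$, and bound the coefficients by $H(f)$. The resulting sum $\sum_{k=1}^{n} k = n(n+1)/2$ produces exactly the stated constant.

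The lower bound rests on the classical identity
\[
\prod_{m=1}^{n} f'(\alpha_m) \;=\; (-1)^{n(n-1)/2} a_n^{n}\prod_{i\neq j}(\alpha_i-\alpha_j) \;=\; \pm\, \frac{D_f}{a_n^{n-2}},
\]
which follows from differentiating the factorization $f(x)=a_n\prod(x-\alpha_j)$ and relating $\prod_{i\neq j}(\alpha_i-\alpha_j)^{\,}$ to $\prod_{i<j}(\alpha_i-\alpha_j)^{2}$. The plan is to rearrange this as
\[
|f'(\alpha_m)| \;=\; \frac{|D_f|/|a_n|^{n-2}}{\prod_{k\neq m}|f'(\alpha_k)|},
\]
and then estimate the denominator from \emph{above} in terms of the Mahler measure.

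For each $k$, I would use $|\alpha_k-\alpha_j|\leq 2\max(1,|\alpha_k|)\max(1,|\alpha_j|)$ in the factorization $|f'(\alpha_k)| = |a_n|\prod_{j\neq k}|\alpha_k-\alpha_j|$. The $n-1$ factors contribute $2^{n-1}$, an extra $\max(1,|\alpha_k|)^{n-1}$, and $\prod_{j\neq k}\max(1,|\alpha_j|)$; using $\prod_{j=1}^{n}\max(1,|\alpha_j|) = M(f)/|a_n|$ reduces this to $|f'(\alpha_k)|\leq 2^{n-1}\max(1,|\alpha_k|)^{n-2}M(f)$. Taking the product over $k\neq m$ and again applying $\prod_{k\neq m}\max(1,|\alpha_k|)\leq M(f)/|a_n|$ yields
\[
\prod_{k\neq m}|f'(\alpha_k)| \;\leq\; 2^{(n-1)^{2}}\,\frac{M(f)^{2n-3}}{|a_n|^{n-2}}.
\]

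Substituting back gives $|f'(\alpha_m)| \geq |D_f|/\bigl(2^{(n-1)^2}M(f)^{2n-3}\bigr)$, and the claimed exponent $2n-2$ follows by the trivial observation $M(f)\geq |a_n|\geq 1$. The only mildly delicate step is the bookkeeping that turns the $n-1$ separate bounds on $|f'(\alpha_k)|$ into the single exponent $(n-1)^{2}$ and that the powers of $|a_n|$ cancel cleanly; no deep input is required.
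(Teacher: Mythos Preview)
Your proof is correct and uses the same essential ingredient as the paper, namely the elementary bound $|\alpha_i-\alpha_j|\leq 2\max(1,|\alpha_i|)\max(1,|\alpha_j|)$ combined with the identification of the double product $\prod_{i\neq j}|\alpha_i-\alpha_j|$ with the discriminant. The organization differs: the paper normalizes each factor of $|f'(\alpha_m)|=\prod_{i\neq m}|\alpha_i-\alpha_m|$ by $\max(1,|\alpha_i|)\max(1,|\alpha_m|)$ and then inflates to the full double product (each of the $(n-1)^2$ missing normalized factors is at most $2$), arriving directly at $2^{-(n-1)^2}|D_f|/M(f)^{2n-2}$. You instead invoke the product identity $\prod_m f'(\alpha_m)=\pm D_f/a_n^{\,n-2}$ and bound the complementary product $\prod_{k\neq m}|f'(\alpha_k)|$ from above.

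Two small advantages of your route worth noting: it keeps track of $a_n$ explicitly (the paper's line $|f'(\alpha_m)|=\prod_{i\neq m}|\alpha_i-\alpha_m|$ tacitly assumes $a_n=1$, which is harmless in the paper's applications since $F$ is taken monic there), and your intermediate bound actually yields the slightly sharper exponent $M(f)^{2n-3}$ before you discard one factor of $M(f)\geq 1$ to match the stated form. Otherwise the two arguments are reshufflings of the same computation.
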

 \begin{proof}
The right hand side inequality is trivial by noticing that
$$
f'(x) = n a_{n}x^{n-1} + \ldots + a_{1}x.
$$
To see the left hand side inequality,  observe that for $\alpha_{i}$, $\alpha_{j}$, two distinct roots of $f(x)$, we have
 $$
 |\alpha_{i} - \alpha_{j}| \leq 2 \max(1 , |\alpha_{i}|) \max (1 , |\alpha_{j}|). 
 $$
Then
 \begin{eqnarray*}
 |f'(\alpha_{m})| & = & \prod_{i=1, i \neq m}^{n} |\alpha_{i} - \alpha_{m}| \geq \prod_{i=1, i \neq m}^{n} \frac{|\alpha_{i} - \alpha_{m}|}{\max(1 , |\alpha_{i}|) \max (1 ,
 |\alpha_{m}|)}\\
 & \geq & 2^{n-1 - n(n-1)} \prod _{j=1}^{n}\prod_{i=1, i \neq j}^{n} \frac{|\alpha_{i} - \alpha_{j}|}{\max(1 , |\alpha_{i}|) \max (1 ,
 |\alpha_{j}|)}\\
 & = & 2^{-(n-1)^2} \frac{\left|D_{F}\right|}{M(F)^{2n -2}}.
 \end{eqnarray*}
 \end{proof}

Suppose that $\mathbb{K}$ is an algebraic number field of degree $d$ over $\mathbb{Q}$ embedded in $\mathbb{C}$. If $\mathbb{K} \subset \mathbb{R}$, we put $\chi = 1$, and otherwise $\chi = 2$. We are given numbers $\gamma_{1} , \ldots, \gamma_{n} \in \mathbb{K}^{*}$ with absolute logarithmic heights $h(\gamma_{j})$, $1\leq j \leq n$. Let $\log \gamma_{1}$ , $\ldots$ , $\log \gamma_{n}$ be arbitrary fixed non-zero values of the logarithms. Suppose that 
$$
A_{j} \geq \max \{dh(\gamma_{j}) , |\log \gamma_{j}| \}, \  \   1 \leq j \leq n.
$$
Now consider the linear form 
$$
\mathfrak{L} = b_{1}\log\gamma_{1} + \ldots + b_{n}\log\gamma_{n},
$$
with $b_{1}, \ldots , b_{n} \in \mathbb{Z}$ and with the parameter 
$$B = \max \{1 , \max\{b_{j}A_{j}/A_{n}: \  1\leq j  \leq n\}\}.$$ 
For brevity we put
$$
\Omega = A_{1} \ldots A_{n},
$$
$$
C(n) = C(n , \chi) = \frac{16}{n!\chi}e^{n}(2n + 1 + 2 \chi)(n + 2) (4n + 4)^{n + 1}\left(\frac{1}{2}en\right)^{\chi}  ,
$$
$$
C_{0} = \log (e^{4.4n + 7}n^{5.5}d^{2}\log (en)),
$$
$$
W_{0} = \log(1.5eBd\log(ed)).
$$
 The following is the main result of \cite{Mat2}.
 \begin{prop}[Matveev \cite{Mat2}]\label{mat}
If $\log\gamma_{1} , \ldots , \log\gamma_{n}$ are linearly independent over $\mathbb{Z}$ and
$b_{n} \neq 0$, then 
$$
\log|\mathfrak{L}| > -C(n) C_{0} W_{0}d^{2}\Omega.
$$
\end{prop}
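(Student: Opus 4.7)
The statement is Matveev's lower bound for linear forms in logarithms, one of the deepest quantitative results in transcendence theory; any proof must invoke the full interpolation-determinant machinery developed by Laurent and refined by Matveev. My plan is to follow the strategy of \cite{Mat2}: assume for contradiction that $\log|\mathfrak{L}| \leq -C(n)C_0 W_0 d^2 \Omega$, then construct a non-zero algebraic integer whose absolute value admits incompatible upper and lower bounds. The key refinement over Baker--W\"ustholz that gives the cleaner factor $(4n+4)^{n+1}$ is the replacement of the auxiliary function by a Laurent-style interpolation determinant combined with a Kummer descent.

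First I would choose integer parameters $L_0, L_1, \ldots, L_n$, optimised so that $L_j \asymp A_j^{-1}$ times a suitable power of $\Omega$, and form the matrix whose $(i, (\ell_0, \ldots, \ell_n))$ entry is $s_i^{\ell_0} \gamma_1^{\ell_1 s_i} \cdots \gamma_n^{\ell_n s_i}$ for certain integers $s_i$ in an arithmetic progression. The associated determinant $\Delta$ is a polynomial in the $\gamma_j$ and hence a non-zero algebraic integer over the ring generated by $\gamma_1, \ldots, \gamma_n$, after clearing denominators with a bounded factor. The analytic upper bound for $|\Delta|$ comes from the observation that the hypothetical smallness of $\mathfrak{L}$ forces near-linear dependence among the columns: expanding the exponentials $\gamma_j^{\ell s_i} = \exp(\ell s_i \log\gamma_j)$ in Taylor series around a near-relation produces cancellation of order proportional to the total degree, bounding $|\Delta|$ by $\exp(-c \Omega)$ up to combinatorial factors. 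The arithmetic lower bound comes from the product formula: $|\Delta| \geq H(\Delta)^{-d}$ after estimating the house and denominator of $\Delta$ in terms of $A_1, \ldots, A_n$ and $B$, whence the factor $W_0 = \log(1.5 e B d \log(ed))$ enters.

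The main obstacle, and the technical heart of Matveev's improvement, is verifying that $\Delta \neq 0$. A direct application of a multiplicity (zero) estimate on the algebraic group $\mathbb{G}_m^{n+1}$ would require $\log \gamma_1, \ldots, \log \gamma_n$ to be linearly independent over $\mathbb{Q}$, which is weaker than the hypothesis. To bridge this gap one performs a Kummer-theoretic descent: pass to the extension $\mathbb{K}(\gamma_1^{1/p}, \ldots, \gamma_n^{1/p})$ for an auxiliary prime $p$, show that the relevant subgroup remains of full rank, and use the group-variety zero estimates of Philippon or W\"ustholz to rule out vanishing. Tracking the loss in the resulting constants produces precisely the factor $C(n,\chi)$ with its explicit dependence on $n!$, $e^n$, and $(4n+4)^{n+1}$. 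Balancing the optimisation of $L_0, \ldots, L_n$ against the descent parameter $p$ and collecting all constants gives the displayed inequality; for the delicate combinatorial bookkeeping I would defer to \cite{Mat2}.
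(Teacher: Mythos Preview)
The paper does not prove this proposition at all: it is quoted verbatim as ``the main result of \cite{Mat2}'' and used as a black box in Section~\ref{LFL}. So there is nothing to compare your argument against; the author simply cites Matveev and moves on.

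Your sketch is a reasonable bird's-eye summary of the architecture of \cite{Mat2} (interpolation determinant, analytic upper bound from the smallness hypothesis, Liouville-type lower bound via the product formula, non-vanishing via a zero estimate plus Kummer descent), but as written it is far too impressionistic to count as a proof. Every step you describe---the choice and optimisation of the parameters $L_0,\ldots,L_n$, the precise cancellation that yields the analytic bound, the height estimate producing $W_0$, and especially the non-vanishing argument---requires dozens of pages of careful work in \cite{Mat2}, and you explicitly defer the ``delicate combinatorial bookkeeping'' to that reference. In the context of this paper that is exactly the right thing to do: the proposition is imported, not re-proved, so the appropriate ``proof'' here is simply a citation.
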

\section{Steps of the Proof of Theorem \ref{main}}\label{SoP}

Suppose that $(x , y)$ is an integral solution to (\ref{1.2}). We will assume that $F$ is monic, as we may. Then we have
$$
(x - \alpha_{1} y) (x - \alpha_{2} y) \ldots (x - \alpha_{n}y) = \pm 1.
$$
Therefore, for some $ \alpha \in \{\alpha_{1}, \alpha_{2}, \ldots, \alpha_{n}\} $,
$$
\left| x - \alpha y\right| \leq  1.
$$

\textbf{ Definition}. We say the pair of solution $(x , y)$ is {\emph related} to $\alpha$ if
$$ \alpha \in \{\alpha_{1}, \alpha_{2}, \ldots, \alpha_{n}\} $$
and
$$
\left| x - \alpha y\right| = \min_{1\leq j\leq n} \left| x - \alpha_{j} y\right|.
 $$

Let $F(x , y)$ be a binary form of degree $n \geq 5$, discriminant $D$, with $|D| >D_{0}$ and  Mahler measure $M(F)$, where $D_{0}$ is an effectively computable constant depending only on $n$ (see the statement of Theorem \ref{main}). We will assume that all coefficients of $F$ are integer and $F(x , 1) = 0$ has $r$ real roots and $2s$ non-real roots ($r+2s = n$). 
Here we describe briefly the steps of our proof to the main result of this manuscript,
Theorem \ref{main}. 

In the following  steps, we fix a root of $F(x , 1) = 0$ and estimate the number of solutions related to that root from above.
Let $\alpha$ be a complex  root of $F(x, 1) = 0$ and $\bar{\alpha}$ be its complex conjugate. For integers $x$ and  $y$ we have
$$
\left| x - \alpha y\right| = \left| x - \bar{\alpha} y\right|.
$$
 Hence,
 a solution $(x , y)$ of (\ref{1.2}) is related to $\alpha$ if and only if it is related to $\bar{\alpha}$.   It is, therefore, sufficient to count the number of solutions related to one of $\alpha$ and $\bar{\alpha}$.

\begin{prop}\label{Grp}
For binary form $F(x , y)$ with integer coefficients and degree $n$, let $\alpha$ be a non-real root of $F(x , 1) = 0$. If a pair of integer $(x , y)$ satisfies
$F(x , y) = \pm 1$ and is related to $\alpha$ then
\begin{equation}\label{AG}
|y| \leq \frac{(n+1)  2^{\frac{(n-1)^2}{n}}}{\left(\sqrt{3} \left|D\right|\right)^{1/n}} M(F)^{3-3/n},
\end{equation}
\end{prop}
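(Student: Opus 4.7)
My plan is to exploit the factorization $F(x,y) = \prod_{j=1}^n (x-\alpha_j y) = \pm 1$ in order to push a factor of $|y|^n$ to one side of a product identity. Since $F$ is monic with integer coefficients, $\prod_j |x-\alpha_j y| = 1$. Because $\alpha$ is non-real and $x,y\in\mathbb{Z}$, the conjugate root $\bar\alpha$ of $F(x,1)$ satisfies $|x-\bar\alpha y| = |x-\alpha y|$, so the ``related'' hypothesis forces \emph{both} of these quantities to realize the minimum. The natural first move is therefore to isolate the conjugate pair as
$$|x-\alpha y|^2 \prod_{j:\,\alpha_j\neq\alpha,\bar\alpha}|x-\alpha_j y| \;=\; 1.$$

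I would then bound each factor from below. Writing $\alpha = a+bi$ with $b\neq 0$ and using that $x,y$ are real, the elementary inequality
$$|x-\alpha y|^2 = (x-ay)^2 + b^2 y^2 \;\geq\; b^2 y^2 \;=\; \tfrac{1}{4}|\alpha-\bar\alpha|^2 y^2$$
handles the distinguished pair. For each of the remaining $n-2$ indices, applying the triangle inequality to $(\alpha-\alpha_j)y = (x-\alpha_j y)-(x-\alpha y)$ and invoking minimality $|x-\alpha y|\leq |x-\alpha_j y|$ yields $|x-\alpha_j y| \geq \tfrac12|\alpha-\alpha_j||y|$. Multiplying these $n-2$ inequalities and recognizing that $\prod_{\alpha_j\neq\alpha,\bar\alpha}|\alpha-\alpha_j| = |f'(\alpha)|/|\alpha-\bar\alpha|$ (where $f(x)=F(x,1)$) gives
$$\prod_{j:\,\alpha_j\neq\alpha,\bar\alpha}|x-\alpha_j y| \;\geq\; \frac{|y|^{n-2}\,|f'(\alpha)|}{2^{n-2}|\alpha-\bar\alpha|}.$$
Substituting both bounds back into the product identity and cancelling one factor of $|\alpha-\bar\alpha|$ produces the core inequality
$$|y|^n \;\leq\; \frac{2^n}{|\alpha-\bar\alpha|\,|f'(\alpha)|}.$$

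To finish, I would substitute Mahler's root-separation estimate from Lemma \ref{mahl5}, namely $|\alpha-\bar\alpha|\geq \sqrt{3}(n+1)^{-n}M(F)^{-(n-1)}$, together with the discriminant-based lower bound $|f'(\alpha)|\geq 2^{-(n-1)^2}|D|/M(F)^{2n-2}$ of Lemma \ref{esug}, and then extract the $n$th root. The exponents of $M(F)$ collapse to $3 - 3/n$ in the numerator, the $|D|$ term produces $(\sqrt{3}|D|)^{-1/n}$ in the denominator, and a factor of $(n+1)$ appears, matching the stated form. No subtle analytic estimate is required: the main care points are (i) isolating the conjugate pair $(\alpha,\bar\alpha)$ at the outset so that only a single copy of $|\alpha-\bar\alpha|$ must be cancelled against $|f'(\alpha)|$ (rather than two copies, which would leave an unwanted factor in the denominator), and (ii) tracking the powers of $2$ and $M(F)$ through the two lemmas so that the final bookkeeping yields precisely the constant $2^{(n-1)^2/n}$ rather than a worse one.
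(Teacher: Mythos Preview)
Your argument is correct and follows essentially the same route as the paper: both bound the distinguished factor via $|x-\alpha y|\geq \tfrac12|\alpha-\bar\alpha|\,|y|$ (you via the imaginary-part inequality, the paper via the equivalent averaging $|x/y-\alpha|=\tfrac12(|x/y-\alpha|+|x/y-\bar\alpha|)\geq\tfrac12|\alpha-\bar\alpha|$), bound each remaining factor by $\tfrac12|\alpha-\alpha_j|\,|y|$ from minimality, and then substitute Lemmas~\ref{mahl5} and~\ref{esug}. The only cosmetic difference is that you separate off the pair $(\alpha,\bar\alpha)$ before passing to $|f'(\alpha)|$, whereas the paper leaves $\bar\alpha$ in the product and recognises $\prod_{\alpha_i\neq\alpha}|\alpha-\alpha_i|=|f'(\alpha)|$ directly; both yield the same core inequality $|y|^n\leq 2^n/(|\alpha-\bar\alpha|\,|f'(\alpha)|)$.
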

\begin{proof}
 Let $\alpha = \mathfrak{r} +i \mathfrak{t}$, with $\mathfrak{t} \neq 0$, be a non-real root of $F(x, 1) = 0$. If a solution $(x , y)$ of (\ref{1.2}) is related to $\alpha$ then  $\bar{\alpha}$, the complex conjugate of $\alpha$ is also a root of  $F(x, 1) = 0$ and we have
 $$
 \left| \frac{x}{y} - \alpha \right| = \frac{\left| \frac{x}{y} - \alpha \right| + \left| \frac{x}{y} - \bar{\alpha} \right| }{2} \geq\frac{ \left| \alpha - \bar{\alpha} \right|}{2}.$$
 Moreover, if $\beta \neq \alpha$ is a root of $F(x , 1) = 0$ then
 $$
 \left| \frac{x}{y} - \beta \right| \geq\frac{ \left| \frac{x}{y} - \alpha \right| +  \left| \frac{x}{y} - \beta \right|}{2} \geq  \frac{\left| \beta - \alpha \right|}{2}.
 $$
 Thus 
     \begin{eqnarray*}
 \frac{1}{|y|^n} & = & \left| \frac{x}{y} - \alpha \right|  \prod_{\alpha_{i} \neq \alpha} \left| \frac{x}{y} - \alpha_{i}\right|  \\ \nonumber
 &\geq& \frac{\left|\alpha - \bar{\alpha}\right|}{2}  \prod_{\alpha_{i} \neq \alpha}\frac{ \left| \alpha - \alpha_{i}\right|}{2}  \\ \nonumber
& = &  \left|\alpha - \bar{\alpha}\right| \left|f' (\alpha)\right| 2^{-n}.
  \end{eqnarray*}
  By Lemma \ref{mahl5},
$$
  \left|\alpha - \bar{\alpha}\right| \geq \sqrt{3} (n+1)^{-n} M(F)^{-n+1}.
$$
This, together with Lemma \ref{esug}, shows that 
$$
 \frac{1}{|y|^n} \geq \sqrt{3} (n+1)^{-n}2^{-(n-1)^2} \frac{\left|D\right|}{M(f)^{3n -3}} .
$$
This completes our proof.
\end{proof}

 Repeating an argument of Stewart  \cite{Ste} and using our assumption  that  absolute value of the discriminant of $F$ is large in terms of its degree, in Section \ref{LS} we will show that there are at most $5 (r+s) $ solutions $(x , y)$ with  $0 < y \leq M(F)^2$.

 Lemma \ref{SC1}  and \ref{SC2} give an upper bound  $2r + s$ for  the number of solutions $(x , y)$ with $M(F)^2 < y < M(F)^{1+ (n-1)^2}$.  To prove Lemma \ref{SC1} we will appeal to a classical  inequality of Lewis and Mahler
 (see Lemma \ref{3S}).

 For a non-real root $\alpha$  of $F(x, 1) = 0$, Proposition  \ref{Grp} says that we  only need to count the solutions $(x , y)$ related to $\alpha$ with 
 $$|y| \leq \frac{(n+1)  2^{(n-1)^2/n}}{\sqrt{3} \left|D \right|^{1/n}} M(F)^{3-3/n}.$$
The solutions with larger $y$ must be related to a real root of $F(x , 1) = 0$.

Our approach  to count the number of possibly remaining solutions differs from the approach of Bombieri-Schmidt \cite{Bom} and Stewart \cite{Ste}. In Section \ref{LC}, we will define 
a logarithmic map $\phi(x , y)$. Some geometric properties of this curve lead us to obtain an exponential gap principle in Section \ref{EGP}.
This new type of gap principle, together with Baker theory of linear forms in logarithms (see Proposition \ref{mat}),
 will be used in Section \ref{LFL} to establish an upper bound $2r$ for  the number of solutions $(x , y)$ with $y \geq M(F)^{1+ (n-1)^2}$.

For some technical reasons, particularly to estimate quantities in Proposition \ref{mat} while counting the number of solutions $(x , y) $ with $y \geq M(F)^{1+ (n-1)^2}$, we will need to exclude a set of solutions from our search. This set is   called  $\mathfrak{A}$ and is defined in section \ref{LS}. The set $\mathfrak{A}$ contains $2r+2s-2$ ``small'' solutions.  

Hence, under the assumption of Theorem \ref{main},   there can not exist more than $11r + 4s -2$ to  equation (\ref{1.2}).


\section{The Logarithmic Curve $\phi(x , y)$} \label{LC}

  In order to count the number of ``large'' solutions to $F(x , y) = 1$, many  mathematicians including Bombieri and Schmidt \cite{Bom} and Stewart \cite{Ste} followed and refined a general method  inaugurated by Siegel and Mahler. The general line of attack to the problem of counting ``large" solutions deals rather efficiently with solutions $x$, $y$ to $F(x , y) =  1$, provided that $\max (|x| , |y|)$ is larger than a certain power of the height of $F$. We will, in contrast, associate a transcendental curve $\phi(x , y)$ to the binary form $F(x , y)$. However, the reason in success of both our method and the more classical method of Siegel and Mahler lies in the fact that $\frac{x}{y}$ is a good approximation to a root of the equation $F(x , 1) = 0$ when either $x$ or $y$ is large enough.

Let $D$ be the discriminant of the binary form $F(x , y)$ and $f(x) = F(x , 1)$. Define, for $m \in \{1, 2, \ldots, n\}$,
\begin{equation}\label{fi}
\phi_{m}(x , y) = \log \left| \frac{D^{\frac{1}{n(n-2)}}(x - y\alpha_{m})}{\left( f'(\alpha_{m})\right)^{\frac{1}{n-2} }}\right|
\end{equation}
and
\begin{equation}\label{fgs}
\phi(x , y) = \left( \phi_{1}(x , y) , \phi_{2}(x , y), \ldots , \phi_{n}(x , y) \right).
\end{equation}

We will estimate the size of $f'(\alpha_{m})$ from below in order to give an upper bound on the size of $\phi(x , y)$.

\begin{lemma}\label{lem10}
Suppose that $F$ is a monic binary form satisfying the conditions  in Theorem \ref{main}.
Then $(1 , 0)$ is a solution to the equation $\left|F(x , y)\right| = 1$ and 
$$
 \left\| \phi(1 , 0) \right\| \leq n \log \left( |D|^{\frac{1}{n(n-2)}} M(F)^{\frac{2n -2}{n-2}}\right),
 $$
\end{lemma}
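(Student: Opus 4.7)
\smallskip

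The first assertion is immediate. Since $F$ is monic of degree $n$, it factors as $F(x,y) = \prod_{i=1}^{n}(x-\alpha_i y)$, so $F(1,0) = \prod_{i=1}^n 1 = 1$ and $(1,0)$ is a solution of $|F(x,y)|=1$. For the norm estimate, I would substitute $(x,y) = (1,0)$ directly into the definition \eqref{fi} to obtain, for each $m$,
$$
\phi_m(1,0) \;=\; \frac{1}{n(n-2)}\log|D| \;-\; \frac{1}{n-2}\log|f'(\alpha_m)|.
$$
The whole game is now to trap $|f'(\alpha_m)|$ tightly enough on both sides that each $|\phi_m(1,0)|$ is at most $\log\bigl(|D|^{1/(n(n-2))}M(F)^{(2n-2)/(n-2)}\bigr)$; summing (or taking the maximum) over the $n$ coordinates then yields the stated bound.

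For the upper bound on $\phi_m(1,0)$ I would invoke Lemma \ref{esug} directly: the inequality $|f'(\alpha_m)| \geq 2^{-(n-1)^2}|D|/M(F)^{2n-2}$ gives
$$
\phi_m(1,0) \;\leq\; \frac{1}{n(n-2)}\log|D| \;+\; \frac{2n-2}{n-2}\log M(F) \;+\; \frac{(n-1)^2\log 2 - \log|D|}{n-2}.
$$
For the lower bound, Lemma \ref{esug}'s upper estimate has a factor $\max(1,|\alpha_m|)^{n-1}$ that is not quite suitable; I would instead write $f'(\alpha_m) = \prod_{i\neq m}(\alpha_m - \alpha_i)$ and use the elementary inequality $|\alpha_m-\alpha_i| \leq 2\max(1,|\alpha_m|)\max(1,|\alpha_i|)$ together with $M(F) = \prod_i \max(1,|\alpha_i|)$ to deduce $|f'(\alpha_m)| \leq 2^{n-1}M(F)^{n-1}$. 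This yields
$$
\phi_m(1,0) \;\geq\; \frac{1}{n(n-2)}\log|D| \;-\; \frac{n-1}{n-2}\log\bigl(2M(F)\bigr).
$$

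To finish, I would verify that the hypothesis $|D| > D_0(n)$ is sufficient to absorb the spurious $\log 2$ factors in both directions. The upper bound collapses to $\phi_m(1,0) \leq \log\bigl(|D|^{1/(n(n-2))}M(F)^{(2n-2)/(n-2)}\bigr)$ as soon as $|D| \geq 2^{(n-1)^2}$, while the lower bound becomes $\phi_m(1,0) \geq -\log\bigl(|D|^{1/(n(n-2))}M(F)^{(2n-2)/(n-2)}\bigr)$ as soon as $|D| \geq 2^{n(n-1)/2}$; both are provided by the blanket hypothesis of Theorem \ref{main}. Summing the resulting bound $|\phi_m(1,0)| \leq \log\bigl(|D|^{1/(n(n-2))}M(F)^{(2n-2)/(n-2)}\bigr)$ over $m=1,\dots,n$ produces the claim. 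I expect the main nuisance to be exactly this constant bookkeeping---making sure that the precise $D_0$ selected throughout the paper really does dominate $2^{(n-1)^2}$ in $n$, rather than any deep step; the identity $\sum_m \log|f'(\alpha_m)| = \log|D|$ coming from the discriminant--resultant formula $\prod_m f'(\alpha_m) = \pm D$ could be used as a consistency check (it forces $\sum_m \phi_m(1,0) = 0$) but is not strictly needed.
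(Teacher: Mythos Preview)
Your argument is correct and follows essentially the same route as the paper: reduce to bounding each coordinate $\phi_m(1,0)$ via the estimates on $|f'(\alpha_m)|$ from Lemma~\ref{esug}, then absorb the stray powers of $2$ into the hypothesis $|D|>D_0(n)$. In fact you are more careful than the paper's own proof, which only writes out the upper bound $\phi_m(1,0)\le \log\bigl(|D|^{1/(n(n-2))}M(F)^{(2n-2)/(n-2)}\bigr)$ coming from the lower estimate on $|f'(\alpha_m)|$; your additional bound $|f'(\alpha_m)|\le 2^{n-1}M(F)^{n-1}$ and the resulting lower estimate on $\phi_m(1,0)$ fill a small gap that the paper leaves implicit.
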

\begin{proof}
By the definition of $\phi$ in (\ref{fgs}),
$$
\left\| \phi(1 , 0) \right\| \leq \sum_{m=1}^{n} \log \left| \frac{D^{\frac{1}{n(n-2)}}}{\left| f'(\alpha_{m})\right|^{\frac{1}{n-2} }}\right|
$$
Lemma \ref{esug}  estimates $\left| f'(\alpha_{m})\right|^{\frac{1}{n-2} }$ as follows,
$$
|f'(\alpha_{m})| \geq 2^{-(n-1)^2} \frac{\left|D\right|}{M(F)^{2n -2}}.
$$
Since  $D_{F}$ is large, definitely larger than $2^{-(n-1)^2}$, we have
$$
|f'(\alpha_{m})| \geq  \frac{1}{M(F)^{2n -2}}.
$$
This completes  our proof.
\end{proof}
  
  \begin{lemma}\label{lem1}
Suppose that $(x ,y)$ is a solution to the equation $\left|F(x , y)\right| = 1$ for the binary form $F$ in Theorem \ref{main}. Suppose that
$$
\left| x - \alpha_{i} y\right| = \min_{1\leq j\leq n} \left| x - \alpha_{j} y\right|.
 $$
Then 
$$
 \left\| \phi(x , y) \right\| \leq \frac{(n+1)^2}{4} \log \frac{1}{ \left| x - \alpha_{i} y\right|} + n \log \left( |D|^{\frac{1}{n(n-2)}} M(F)^{\frac{2n -2}{n-2}}\right),
 $$
where $\| . \|$ is the Euclidean norm.
\end{lemma}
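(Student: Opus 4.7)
The plan is to expand each coordinate of $\phi(x,y)$ directly from (\ref{fi}) and then combine them into an upper bound on the Euclidean norm. Writing
$$\phi_m(x,y) = \log|x - \alpha_m y| + \frac{1}{n(n-2)}\log|D| - \frac{1}{n-2}\log|f'(\alpha_m)|,$$
the last two summands do not involve $(x,y)$, and they are already controlled exactly as in the proof of Lemma \ref{lem10}: Lemma \ref{esug} together with the largeness of $|D|$ (which absorbs the factor $2^{-(n-1)^2}$) gives $|f'(\alpha_m)| \geq M(F)^{-(2n-2)}$, so this piece of $\phi_m$ is uniformly bounded above by $U := \log\bigl(|D|^{1/(n(n-2))} M(F)^{(2n-2)/(n-2)}\bigr)$. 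Summing $n$ such bounds produces the second term $n U$ of the desired inequality.

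The main work is to bound $\log|x-\alpha_m y|$ for each $m$. For $m = i$ this is just $-\log(1/|x-\alpha_i y|)$, a large negative quantity. For $m \neq i$, the minimality hypothesis gives $|x-\alpha_m y| \geq |x-\alpha_i y|$, while the triangle inequality yields
$$|x - \alpha_m y| \leq |x-\alpha_i y| + |y||\alpha_i - \alpha_m|.$$
To turn this into a useful upper estimate, $|y|$ itself must be controlled in terms of $|x-\alpha_i y|$; this follows from the identity $\prod_j |x - \alpha_j y| = 1$ combined with the lower bounds $|x - \alpha_j y| \geq |y||\alpha_j - \alpha_i|/2$ for $j \neq i$, which hold once $|x-\alpha_i y|$ is small enough relative to the minimum root separation supplied by Lemma \ref{mahl5}. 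The result is an estimate of the shape $\log|y| \leq \frac{1}{n-1}\log(1/|x-\alpha_i y|) + O(\log M(F))$, with the error of order $\log M(F)$ absorbed into $U$.

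Substituting back, every coordinate $|\phi_m(x,y)|$ is bounded by a linear combination of $\log(1/|x-\alpha_i y|)$ and $U$ with coefficients depending only on $n$. The Euclidean norm is then estimated either through $\|\phi\|_2 \leq \|\phi\|_1$ or $\|\phi\|_2 \leq \sqrt{n}\,\max_m|\phi_m|$, and balancing the dominant contribution from $\phi_i$ against those of the remaining $n-1$ coordinates produces the stated inequality. The principal obstacle will be arriving at exactly the coefficient $(n+1)^2/4$ in front of $\log(1/|x-\alpha_i y|)$: a routine use of the above inequalities only gives a bound of order $n^2$, and pinning down $(n+1)^2/4$ requires a careful optimisation between the fractional multiple $1/(n-1)$ appearing in the $\log|y|$ estimate and the $n$ coordinates being summed.
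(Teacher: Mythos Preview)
Your detour through bounding $|y|$ is the source of the obstacle you anticipate, and it is in fact a genuine gap rather than a mere optimisation issue. Chasing your inequalities through, one gets $\log|x-\alpha_m y|\le \tfrac{L_i}{n-1}+C'$ with $C'$ containing a term of size roughly $3\log M(F)$ (from $\log|\alpha_m-\alpha_i|\le\log(2M(F))$, plus $2\log M(F)$ coming from $-\tfrac{1}{n-1}\log|f'(\alpha_i)|$ via Lemma~\ref{esug}). But the $M(F)$-part of $U$ is only $\tfrac{2n-2}{n-2}\log M(F)<3\log M(F)$ for $n\ge5$, so after summing over the $n-1$ coordinates the accumulated error $(n-1)C'$ strictly exceeds the $M(F)$-contribution in $nU$ and cannot be absorbed. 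What you would prove is a bound of the shape $\|\phi\|\le 2L_i+c\,nU$ with some $c>1$; this neither implies the stated inequality (it fails when $L_i$ is small) nor produces the coefficient $(n+1)^2/4$ by any optimisation.

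The paper's argument avoids $|y|$, root separation, and Lemma~\ref{mahl5} entirely. Write $a_m=\log|x-\alpha_m y|$; the only inputs are minimality ($a_m\ge a_i$ for all $m$) and the product condition ($\sum_m a_m=0$). Let $p$ be the number of indices with $a_m\le 0$; for these $|a_m|\le L_i:=-a_i$. For the remaining $n-p$ indices $a_m>0$, and each such $a_m$ is bounded by $\sum_{a_l>0}a_l=\sum_{a_l\le 0}|a_l|\le pL_i$. Hence
\[
\|\phi\|_2\le\|\phi\|_1\le\sum_m|a_m|+\sum_m|c_m|\le pL_i+(n-p)\,pL_i+nU=((n+1)p-p^2)L_i+nU,
\]
and maximising the quadratic in $p$ gives $(n+1)^2/4$. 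No extra $\log M(F)$ terms appear because the large $a_m$'s are controlled directly by the product constraint rather than by unpacking them as $|y|\cdot|\alpha_m-\alpha_i|$.
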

\begin{proof}
Since $|F(x , y)| = \prod_{1\leq j\leq n} \left| x - \alpha_{j} y\right| = 1$
 and 
$
\left| x - \alpha_{i} y\right| = \min_{1\leq j\leq n} \left| x - \alpha_{j} y\right|,
 $
we have 
$\left| x - \alpha_{i} y\right| \leq 1$. 
Let us assume that 
$$
\left| x - \alpha_{s_{j}} y\right| \leq 1, \qquad  \textrm{for}  \  1 \leq j \leq p
 $$
 and 
 $$
\left| x - \alpha_{b_{k}} y\right| > 1, \qquad \textrm{for} \  1\leq k \leq n-p,
$$
where $1\leq p, s_{j} , b_{k} \leq n$.
Since 
  $$
\left| x - \alpha_{i} y\right| = \min_{1\leq j\leq n} \left| x - \alpha_{j} y\right|,
 $$
we  have
  $$
\left|\log \left| x - \alpha_{s_{j}} y\right| \right| \leq \left| \log  \left| x - \alpha_{i} y\right| \right|. 
 $$ 
We also have 
$$
\prod_{k} \left| x - \alpha_{b_{k}} y\right| = \frac{1} {\prod_{j} \left| x - \alpha_{s_{j}} y\right|} .
$$
Therefore, for any $ 1 \leq k \leq n-p$, we have
$$
\log \left| x - \alpha_{b_{k}} y\right| \leq p \log  \frac{1}{\left| x - \alpha_{i} y\right|}.
 $$
From here and the definition of $\phi(x , y)$ (see (\ref{fgs})), we conclude that
  \begin{eqnarray*}
 \left\| \phi(x , y) \right\| & \leq&\sum_{m=1}^{n} \log \left| \frac{D^{\frac{1}{n(n-2)}}}{\left| f'(\alpha_{m})\right|^{\frac{1}{n-2} }}\right| +(n -p)p \left| \phi_{i}( x , y) \right| +p \left| \phi_{i}( x , y) \right|  \\ \nonumber
  & = & \sum_{m=1}^{n} \log \left| \frac{D^{\frac{1}{n(n-2)}}}{\left| f'(\alpha_{m})\right|^{\frac{1}{n-2} }}\right|+
\left((n+1)p - p^2\right) \left| \phi_{i}( x , y) \right|.
 \end{eqnarray*}
  The function $f(p) = (n+1)p - p^2$ assumes its maximum value $\frac{(n+1)^2}{4}$ at $p = \frac{n+1}{2}$. To  complete the proof we  use our  estimate in Lemma \ref{lem10}.
 \end{proof}

\begin{lemma}\label{lem100}
Let $F$ be an irreducible monic binary form of degree $n$. Suppose that $(x , y)$ is a solution to the Thue equation $F(x , y) = \pm 1$ with $y \geq M(F)^{1+(n-1)^2}$. Then
$$
\left\| \phi(1 , 0) \right\| < \left\| \phi(x , y) \right\|.
$$
\end{lemma}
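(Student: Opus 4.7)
The plan hinges on the identity $\phi_m(x,y)-\phi_m(1,0)=\log|x-\alpha_m y|$ holding for every $m$, so that the coordinates of $\phi(x,y)-\phi(1,0)$ are precisely the numbers $\log|x-\alpha_m y|$ and they sum to $\log|F(x,y)|=0$. Let $i$ be the index with $|x-\alpha_i y|=\min_j|x-\alpha_j y|\leq 1$. I would argue that this single coordinate is so large in absolute value that $|\phi_i(x,y)|$ by itself already exceeds $\|\phi(1,0)\|$; combined with $\|\phi(x,y)\|\geq|\phi_i(x,y)|$ this gives the desired inequality.

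The first step is to bound $|x-\alpha_i y|$ sharply from above. Since $\alpha_i$ is nearest to $x/y$, for $j\neq i$ one has $|\alpha_i-\alpha_j|\,y\leq|x-\alpha_i y|+|x-\alpha_j y|\leq 2|x-\alpha_j y|$, so
$$\prod_{j\neq i}|x-\alpha_j y|\;\geq\;\frac{|f'(\alpha_i)|\,y^{n-1}}{2^{n-1}}.$$
Combining this with $\prod_j|x-\alpha_j y|=1$, with the lower bound $|f'(\alpha_i)|\geq 2^{-(n-1)^2}|D|/M(F)^{2n-2}$ from Lemma \ref{esug}, and with the hypothesis $y\geq M(F)^{1+(n-1)^2}$ (using the identity $(n-1)(1+(n-1)^2)-(2n-2)=n(n-1)(n-2)$) yields
$$-\log|x-\alpha_i y|\;\geq\;\log|D|+n(n-1)(n-2)\log M(F)-n(n-1)\log 2.$$

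The second step is the triangle inequality: since $\log|x-\alpha_i y|\leq 0$,
$$|\phi_i(x,y)|=\bigl|\phi_i(1,0)+\log|x-\alpha_i y|\bigr|\;\geq\;\bigl(-\log|x-\alpha_i y|\bigr)-|\phi_i(1,0)|\;\geq\;\bigl(-\log|x-\alpha_i y|\bigr)-\|\phi(1,0)\|,$$
so it is enough to show $-\log|x-\alpha_i y|>2\|\phi(1,0)\|$. Lemma \ref{lem10} bounds the right-hand side by $\tfrac{2}{n-2}\log|D|+\tfrac{4n(n-1)}{n-2}\log M(F)$. For $n\geq 5$, the coefficient $1$ of $\log|D|$ strictly exceeds $\tfrac{2}{n-2}$, and the coefficient $n(n-1)(n-2)$ of $\log M(F)$ strictly exceeds $\tfrac{4n(n-1)}{n-2}$ (since $(n-2)^2>4$), so the coefficient-wise comparison is favourable; the additive $-n(n-1)\log 2$ error is absorbed using $|D|>D_0(n)=2^{22}(n+1)^{10}n^n$.

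The only genuine obstacle is this final coefficient-by-coefficient numerical comparison, which is tightest at $n=5$; it requires that the excess in the $\log|D|$ coefficient, namely $1-\tfrac{2}{n-2}$, be large enough to swallow $n(n-1)\log 2$ after multiplying by $\log|D|$, and this is precisely where the largeness of $|D|$ built into the hypothesis of Theorem \ref{main} is used.
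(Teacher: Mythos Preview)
Your argument is correct and follows the same overall plan as the paper's proof: exhibit a single coordinate $|\phi_m(x,y)|$ that already exceeds the bound on $\|\phi(1,0)\|$ from Lemma~\ref{lem10}, then use $\|\phi(x,y)\|\geq|\phi_m(x,y)|$. The difference lies in which index is chosen. The paper selects a $j$ with $|x-\alpha_j y|\geq 1$ and asserts (appealing to Lemma~\ref{esug} and $y\geq M(F)^{1+(n-1)^2}$) that $|\phi_j(x,y)|$ is large; you instead select the index $i$ of the \emph{minimal} factor, bound $|x-\alpha_i y|$ sharply from above via $\prod_{j\ne i}|x-\alpha_j y|\geq 2^{1-n}|f'(\alpha_i)|\,y^{n-1}$, and thereby make the role of the hypothesis on $y$ completely explicit. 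Your route is the more transparent one: the bare inequality $|x-\alpha_j y|\geq 1$ carries no information about $y$ on its own, so the paper's sketch implicitly needs an auxiliary step of the kind you supply in order to bring the size of $y$ into the estimate.

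One minor numerical point on your last paragraph: at $n=5$ the margin $(1-\tfrac{2}{n-2})\log|D|=\tfrac13\log|D|$ by itself falls just short of $n(n-1)\log 2$ when $|D|$ is taken at the quoted value $D_0=2^{22}(n+1)^{10}n^n$. To close this you should also use the strictly positive surplus in the $\log M(F)$ coefficient together with Mahler's inequality~\eqref{mahD5}, which converts that surplus into additional $\log|D|$; after that the comparison goes through with ample room for all $n\geq 5$.
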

\begin{proof}
Let $\alpha_{1}$, $\ldots$, $\alpha_{n}$ be the roots of $F(z , 1) = 0$. Then 
$$
(\frac{x}{y} - \alpha_{1}) \ldots (\frac{x}{y} - \alpha_{n}) = \frac{\pm 1}{y^n}.
$$
There must exist a root $\alpha_{j}$ so that $\left|\frac{x}{y} - \alpha_{j}\right| \geq \frac{1}{y}$. By Lemma \ref{esug} and since $y \geq M(F)^{1+(n-1)^2}$, the absolute value of the term $\phi_{j}(x , y)$ alone exceeds $n \log \left( |D|^{\frac{1}{n(n-2)}} M(F)^{\frac{2n -2}{n-2}}\right)$. By Lemma \ref{lem10},
 our proof is complete. 
\end{proof}

   Let $U$ be the unit group  of the algebraic number field $\mathbb{Q}(\alpha)$. We define the mapping $\tau$ on $U$ to be the obvious restriction of the embedding of $\mathbb{Q}(\alpha)$ in $\mathbb{C}^{n}$; i.e. $\tau: u \longmapsto (\sigma_{1}(u), \sigma_{2}(u) \ldots \sigma_{n}(u))$, where $\sigma_{i}(u)$ are algebraic conjugates of $u$. 
 By Dirichlet's unit theorem, we have a sequence of mappings 
\begin{equation}\label{ta}
\tau :  U  \to V  \subset \mathbb{C}^{n}
\end{equation}
and
\begin{equation}\label{lo}
 \log : V   \to \Lambda , 
\end{equation}
  where $\Lambda$ is a $(r+s -1)$-dimensional lattice in $\mathbb{R}^n$ and the mapping $\log$ is defined as follows. \newline
For $(x_{1}, \ldots , x_{n}) \in  V $,  let
 $$
 \log(x_{1} , x_{2} , \ldots , x_{n}) := (\log|x_{1}|, \log|x_{2}|, \ldots,  \log|x_{n}|).
 $$
 
  Suppose that $\{ \lambda_{2}, \ldots ,\lambda_{r+s}\}$ is a system of  fundamental units of $\mathbb{Q}(\alpha)$.  Then $\log\left(\tau(\lambda_{2})\right) , \ldots , \log \left(\tau(\lambda_{r+s})\right)$ form a  basis for the lattice $\Lambda$. Moreover,  every basis for $\Lambda$ is associated with a system of fundamental units of  $\mathbb{Q}(\alpha)$. So we will fix a system of fundamental units $\{ \lambda_{2}, \ldots ,\lambda_{r+s}\}$ so that $\log\left(\tau(\lambda_{2})\right) , \ldots , \log \left(\tau(\lambda_{r+s})\right)$ are respectively first to $r+s-1$-th successive minima of the lattice $\Lambda$ (see \cite{Cas}, for the definition of successive minima).
Therefore,
  $$
  \left\|\log \left( \tau(\lambda_{2})\right)\right\|  \leq  \ldots  \leq \left\| \log\left(\tau(\lambda_{r+s})\right)\right\|,
   $$
where $\| . \|$ is the Euclidean norm.
If  $(x , y)$ is a pair of solution to (\ref{1.2}) then 
 $ \frac{x - \alpha_{i}y}{x - \alpha_{j}y }$
  is a unit in $\mathbb{Q}(\alpha_{i} , \alpha_{j})$ and we may write
 \begin{equation}\label{rep}
 \phi(x , y) = \phi(1 , 0) + \sum_{k = 2}^{r+s} m_{k}\log\left(\tau(\lambda_{k})\right), \qquad m_{k} \in \mathbb{Z}.
 \end{equation}

 \section{Layers of Solutions}\label{LS}

As we defined in Section \ref{SoP}, a solution $(x , y)$ is said to be {\emph related} to $\alpha_{i}$ if
$$
\left| x - \alpha_{i} y\right| = \min_{1\leq j\leq n} \left| x - \alpha_{j} y\right|.
 $$

Fix a positive real number $Y_{0}$. Let us first find a bound for the number of solutions $(x , y)$ with $0 < y \leq Y_{0}$.
We may suppose that $F(x , y)$ is a monic form with integral coefficients and  has the smallest Mahler measure among all equivalent monic forms.  Following Stewart \cite{Ste} and Bombieri and Schmidt \cite{Bom}, we will estimate the number of  solutions $(x , y)$ to (\ref{1.2}) for which $0< y \leq Y_{0}$.    For binary form 
$$
F(x , y) = (x - \alpha_{1}y)\ldots  (x - \alpha_{n}y)
$$
put 
$$L_{i}(x , y) = x - \alpha_{i}y
$$
 for $i =1, \ldots, n$. Then
\begin{lemma}\label{S56}
Suppose $F$ is a monic binary form with integral coefficients. Then for every solution $(x , y)$ of (\ref{1.2}) we have
$$
\frac{1}{L_{i}(x , y)} - \frac{1}{L_{j}(x , y)} = (\beta_{j} - \beta_{i}) y,
$$
where $\beta_{1}$,\ldots, $\beta_{n}$ are such that the form
$$
J(u , w) = (u - \beta_{1}w)\ldots (u - \beta_{n}w)
$$
is equivalent to $F$.
\end{lemma}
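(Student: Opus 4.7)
The statement asserts the existence of an equivalent monic form $J$ whose roots $\beta_1,\ldots,\beta_n$ realize the stated identity. The natural strategy is to produce $J$ explicitly from the given solution $(x,y)$ via an appropriate element of $GL_2(\mathbb Z)$, and then verify the identity by a short algebraic computation.

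First, observe that $F(x,y)=\pm 1$ forces $\gcd(x,y)=1$, so we may pick integers $e,f$ with $xf-ey=\varepsilon\in\{\pm1\}$ and set
\[
A=\begin{pmatrix} x & e \\ y & f\end{pmatrix}\in GL_2(\mathbb Z).
\]
Then
\[
F_A(u,w)=F(xu+ew,\,yu+fw)=\prod_{i=1}^{n}\bigl((x-\alpha_i y)u+(e-\alpha_i f)w\bigr).
\]
The coefficient of $u^n$ in $F_A$ is $\prod_i(x-\alpha_i y)=F(x,y)=\pm 1$, so after possibly replacing $F_A$ by $-F_A$ we obtain a monic form $J(u,w)$ equivalent to $F$ in the sense of Section \ref{EF}, with roots
\[
\beta_i \;=\; -\,\frac{e-\alpha_i f}{x-\alpha_i y}\;=\;-\,\frac{e-\alpha_i f}{L_i(x,y)},\qquad i=1,\ldots,n.
\]

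Next, I would simply compute $\beta_j-\beta_i$ by cross-multiplying:
\[
\beta_j-\beta_i \;=\; \frac{(e-\alpha_i f)L_j(x,y)-(e-\alpha_j f)L_i(x,y)}{L_i(x,y)\,L_j(x,y)}.
\]
Expanding the numerator gives $(\alpha_i-\alpha_j)(ey-fx)=-\varepsilon(\alpha_i-\alpha_j)$, so
\[
\beta_j-\beta_i \;=\; \frac{-\varepsilon(\alpha_i-\alpha_j)}{L_i(x,y)\,L_j(x,y)}.
\]
On the other hand,
\[
\frac{1}{L_i(x,y)}-\frac{1}{L_j(x,y)} \;=\; \frac{L_j(x,y)-L_i(x,y)}{L_i(x,y)\,L_j(x,y)} \;=\; \frac{(\alpha_i-\alpha_j)\,y}{L_i(x,y)\,L_j(x,y)}.
\]
Comparing the two expressions yields $\tfrac{1}{L_i(x,y)}-\tfrac{1}{L_j(x,y)}=-\varepsilon(\beta_j-\beta_i)y$. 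By choosing the pair $(e,f)$ with $\varepsilon=-1$ (or equivalently, by replacing $A$ with $A\cdot\mathrm{diag}(1,-1)$ if necessary, an operation still in $GL_2(\mathbb Z)$), we arrive precisely at the desired identity
\[
\frac{1}{L_i(x,y)}-\frac{1}{L_j(x,y)}=(\beta_j-\beta_i)\,y.
\]

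There is no real obstacle here; the only thing to be careful with is the sign bookkeeping (the choice of $\varepsilon$ and the possible substitution $J=-F_A$), and the tacit use of $\gcd(x,y)=1$, which is forced by $F(x,y)=\pm 1$. Everything else is an elementary manipulation relying on nothing beyond the definitions of equivalence of forms and of the linear factors $L_i$.
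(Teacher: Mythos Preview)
Your argument is correct; the construction of $J$ via the unimodular matrix $A=\begin{pmatrix} x & e\\ y & f\end{pmatrix}$ sending $(1,0)$ to $(x,y)$, followed by the direct computation of $\beta_j-\beta_i$, is exactly the standard approach. The paper itself does not supply a proof but merely cites Lemma~4 of \cite{Ste} and Lemma~3 of \cite{Bom} with the specialization $(x_0,y_0)=(1,0)$; what you have written is essentially the argument found in those references.
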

\begin{proof}
 This is Lemma 4 of \cite{Ste} and Lemma 3 of \cite{Bom}, by taking $(x_{0}, y_{0}) = (1 , 0)$.
\end{proof}

For every  solution $(x , y) \neq (1 , 0)$ of (\ref{1.2}), fix $j = j(x , y)$ with
$$
\left|L_{j}(x , y) \right| \geq 1.
$$
Then, by Lemma \ref{S56},
\begin{equation}\label{S57}
\frac{1}{\left|L_{i}(x , y) \right|} \geq |\beta_{j} - \beta_{i}| |y| - 1.
\end{equation}
For complex conjugate  $\bar{\beta_{j}}$  of $\beta_{j}$, where $j = j(x , y)$,  we also have
$$  
\frac{1}{\left|L_{i}(x , y) \right|} \geq |\bar{\beta_{j}} - \beta_{i}| |y| - 1.
$$
Hence
$$
\frac{1}{\left|L_{i}(x , y) \right|} \geq |\textrm{Re}(\beta_{j}) - \beta_{i}| |y| - 1,
$$
where $\textrm{Re}(\beta_{j})$ is the real part of $\beta_{j}$.
We now choose an integer $m = m(x , y)$ with $|\textrm{Re}(\beta_{j}) - \beta_{j}|\leq 1/2$, and we obtain 
\begin{equation}\label{S58}
\frac{1}{\left|L_{i}(x , y) \right|} \geq \left(|m- \beta_{i}| -\frac{1}{2}\right) |y| - 1,
\end{equation}
for $i = 1,\ldots , n$.

For $1\leq i \leq n$, Let $\frak{X}_{i}$ be the set of solutions to (\ref{1.2}) with $1\leq y \leq Y_{0}$ and $\left|L_{i}(x , y) \right| \leq \frac{1}{2y}$.

\textbf{Remark 1.} When $\alpha_{k}$ and $\alpha_{l}$ are complex conjugates, $\frak{X}_{l} = \frak{X}_{k}$ and therefore 
we only need to consider $r + s$ different sets $\frak{X}_{i}$.

\bigskip

 \textbf{Remark 2.} If a solution $(x , y)$ with $1\leq y \leq Y_{0}$ is related to $\alpha_{i}$ then $(x , y) \in \frak{X}_{i}$.

\bigskip

\textbf{Remark 3.} A solution $(x , y)$ may belong to more than one set $\frak{X}_{i}$.

\begin{lemma}\label{Sl5}
Suppose $(x_{1} , y_{1})$ and $(x_{2} , y_{2})$ are two distinct solutions in $\frak{X}_{i}$ with $y_{1} \leq y_{2}$. Then
$$
\frac{y_{2}}{y_{1}} \geq \frac{2}{7} \max(1 , |\beta_{i}(x_{1} , y_{1}) - m(x_{1} , y_{1})|).
$$
\end{lemma}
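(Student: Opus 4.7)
The strategy is to apply the estimate (\ref{S58}) at the ``smaller'' solution $(x_1,y_1)$ itself, rather than at $(x_2,y_2)$, and to use the integrality of the determinant $x_2y_1 - x_1y_2$ to control $|L_i(x_1,y_1)|$ from below.

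First I would check that $(x_1,y_1)$ and $(x_2,y_2)$ cannot be $\mathbb{R}$-proportional. If $(x_2,y_2)=c(x_1,y_1)$ for some $c\in\mathbb{R}$, then by homogeneity $F(x_2,y_2)=c^{n}F(x_1,y_1)$, and the conditions $|F(x_k,y_k)|=1$ with $y_1,y_2\geq 1$ force $c=1$, contradicting distinctness. Hence $\Delta:=x_2y_1-x_1y_2$ is a nonzero integer, so $|\Delta|\geq 1$. A direct expansion gives the telescoping identity
$$
\Delta \;=\; y_1\,L_i(x_2,y_2)\;-\;y_2\,L_i(x_1,y_1),
$$
and together with the $\mathfrak{X}_i$--bound $|L_i(x_2,y_2)|\leq 1/(2y_2)$ and the hypothesis $y_1\leq y_2$ this gives
$$
1 \;\leq\; \frac{y_1}{2y_2} \;+\; y_2\,|L_i(x_1,y_1)| \;\leq\; \frac12 \;+\; y_2\,|L_i(x_1,y_1)|,
$$
whence $|L_i(x_1,y_1)|\geq 1/(2y_2)$, i.e. $1/|L_i(x_1,y_1)|\leq 2y_2$.

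Since $y_1\geq 1$ forces $(x_1,y_1)\neq (1,0)$, inequality (\ref{S58}) applies to $(x_1,y_1)$. Writing $M:=|\beta_i(x_1,y_1)-m(x_1,y_1)|$ it reads
$$
\frac{1}{|L_i(x_1,y_1)|} \;\geq\; \left(M - \tfrac12\right)y_1 \;-\; 1.
$$
Combining with the upper bound from the previous step and dividing through by $y_1\geq 1$ yields
$$
\frac{y_2}{y_1} \;\geq\; \frac{M}{2} \;-\; \frac{3}{4}.
$$

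Finally I would convert this into $\frac{y_2}{y_1}\geq \frac{2}{7}\max(1,M)$ by a short case split. If $M\leq 7/2$ then $\frac{2}{7}\max(1,M)\leq 1$, which is automatic from $y_2\geq y_1$. If $M>7/2$, then $M/2-3/4\geq (2/7)M$ is equivalent to $M\geq 7/2$, so the bound follows. The main conceptual point is the choice to apply (\ref{S58}) to $(x_1,y_1)$ rather than $(x_2,y_2)$; once one notices that the integrality of $\Delta$ forces $|L_i(x_1,y_1)|$ to be at least of order $1/y_2$, the remaining work is bookkeeping, and the constant $\frac{2}{7}$ is precisely what is needed to match the two regimes at $M=7/2$.
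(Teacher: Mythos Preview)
Your proof is correct. The paper itself does not give an argument for this lemma but simply cites Lemma~5 of \cite{Ste} and Lemma~4 of \cite{Bom}; your argument is precisely the standard one from those references, exploiting the integrality of $x_2y_1-x_1y_2$ together with the estimate (\ref{S58}) applied at $(x_1,y_1)$, followed by the case split at $M=7/2$.
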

\begin{proof}
This is Lemma 5 of \cite{Ste} and Lemma 4 of \cite{Bom}.
\end{proof}

\begin{lemma}\label{Sl6}
Suppose $(x , y)$ is a solution to (\ref{1.2}) with $y > 0$ and $\left|L_{i}(x , y) \right|> \frac{1}{2y}$. Then 
$$
|m(x , y) - \beta_{i}(x , y)| \leq \frac{7}{2}.
$$
\end{lemma}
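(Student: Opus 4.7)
The plan is to read off the lemma essentially from inequality (\ref{S58}) by negating the small-$L_i$ hypothesis. Recall that (\ref{S58}) was derived from Lemma \ref{S56} and the choice of the integer $m = m(x,y)$ for \emph{every} solution $(x,y) \neq (1,0)$ of (\ref{1.2}), so it is at our disposal without any further hypothesis on the size of $L_i(x,y)$. The assumption of the lemma $|L_i(x,y)| > \frac{1}{2y}$ is exactly the statement
\[
\frac{1}{|L_i(x,y)|} < 2y,
\]
which is a direct upper bound on the left-hand side of (\ref{S58}).

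Combining these, I would write
\[
\left(|m(x,y) - \beta_i(x,y)| - \tfrac{1}{2}\right) y - 1 \;\leq\; \frac{1}{|L_i(x,y)|} \;<\; 2y,
\]
and then rearrange to $\left(|m - \beta_i| - \tfrac12\right) y < 2y + 1$. Since $(x,y)$ is an integer solution with $y > 0$ we have $y \geq 1$, hence $2y + 1 \leq 3y$. Dividing through by $y$ gives $|m - \beta_i| - \tfrac12 < 3$, i.e. $|m(x,y) - \beta_i(x,y)| < \tfrac{7}{2}$, which is the desired bound.

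There is really no obstacle here: the content of the lemma is already packaged inside (\ref{S58}), and the proof is just a one-line contrapositive of that inequality, exploiting $y \geq 1$ to absorb the additive constant. The only thing to be slightly careful about is the edge case $(x,y) = (1,0)$, which is excluded both by $y > 0$ and by the fact that the definitions of $j(x,y)$ and $m(x,y)$ preceding (\ref{S58}) were made for solutions distinct from $(1,0)$; so the displayed inequality applies to every $(x,y)$ in the hypothesis of the lemma.
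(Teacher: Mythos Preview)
Your argument is correct: the bound follows immediately from (\ref{S58}) together with $y\ge 1$, exactly as you wrote, and you even obtain the strict inequality $|m-\beta_i|<\tfrac72$. The paper itself gives no proof but simply cites Lemma~6 of \cite{Ste}; your derivation is the natural one-line deduction from (\ref{S58}) that Stewart's lemma encapsulates, so the approaches coincide.
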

\begin{proof}
This is Lemma 6 of \cite{Ste}.
\end{proof}

By Lemma \ref{S56} the form 
$$
J(u , w) = (u - \beta_{1}w)\ldots (u - \beta_{n}w)
$$
is equivalent to $F(x , y)$ and therefore the form
$$
\hat{J}(u , w) = (u - (\beta_{1}-m) w)\ldots (u - (\beta_{n}- m)w)
$$
is also equivalent to $F(x , y)$. Therefore, since we assumed that $F$ has the smallest Mahler measure among its equivalent forms, we get
\begin{equation}\label{Spre60}
\prod_{i=1}^{n} \max(1, |\beta_{1}(x , y)-m(x , y)|) \geq M(F). 
\end{equation}
For each set $\frak{X}_{i}$ that is not empty,  let  $(x^{(i)} , y^{(i)})$ be the element with the largest value of $y$.
Let $\frak{X}$ be the set of solutions of (\ref{1.2}) with $1 \leq y \leq Y_{0}$ minus the elements $(x^{(1)} , y^{(1)})$, \ldots, $(x^{(r+s)} , y^{(r+s)})$.
Suppose that, for integer $i$, the set   $\frak{X}_{i}$ is non-empty. Index the elements of $\frak{X}_{i}$ as 
$$(x_{1}^{(i)}, y_{1}^{(i)}), \ldots, (x_{v}^{(i)}, y_{v}^{(i)}),$$
 so that $y_{1}^{(i)} \leq \ldots \leq y_{v}^{(i)}$ (note that $(x_{v}^{(i)}, y_{v}^{(i)}) = (x^{(i)} , y^{(i)})$). By Lemma \ref{Sl5}
\begin{equation*}
\frac{2}{7} \max\left(1, \left|\beta_{i}(x_{k}^{(i)}, y_{k}^{(i)})\right|\right) \leq \frac{y_{k+1}^{(i)}}{y_{k}^{(i)}}
\end{equation*}
for $k = 1 \ldots, v-1$. Hence
$$
\prod_{(x , y) \in  \frak{X} \bigcap \frak{X_{i}} }\frac{2}{7} \max\left(1, \left|\beta_{i}(x_{k}^{(i)}, y_{k}^{(i)})\right|\right) \leq Y_{0}.
$$ 
For $(x , y)$ in $\frak{X}$ but not in  $\frak{X_{i}}$ we have,  by Lemma \ref{Sl6},
$$
\frac{2}{7} \max\left(1, \left|\beta_{i}(x_{k}^{(i)}, y_{k}^{(i)})\right|\right) \leq 1.
$$
Thus
\begin{equation*}\label{S59}
\prod_{(x , y) \in  \frak{X}}\frac{2}{7} \max\left(1, \left|\beta_{i}(x_{k}^{(i)}, y_{k}^{(i)})\right|\right) \leq Y_{0}.
\end{equation*}
Let $|\frak{X}|$ be the cardinality of $\frak{X}$. Comparing the above inequality with (\ref{Spre60}), we obtain 
\begin{equation}\label{S60}
\left( \left(\frac{2}{7}\right)^{n} M(F)\right)^{ |\frak{X}|} \leq Y_{0}^{r+s},
 \end{equation}
for we have $r+s$ different $\frak{X_{i}}$ .
Therefore, by (\ref{mahD5}), we have
$$
\left(\frac{2}{7}\right)^{n} M(F) \geq M(F) ^{\theta}.
$$ 
Here $\theta = \theta(D)$ may be taken equal to $\frac{1}{2}$, for the discriminant $D$ is assumed to be very large. 
From here and by (\ref{S60}),
$$
|\frak{X}| \leq \frac{(r+s) \log Y_{0}}{\theta \log M(F)}.
$$
Thus, when $Y_{0} = M(F)^2$ and $D_{F}$ is large enough,  we have $|\frak{X}|< 4 (r+s)$. Consequently, there are at most $5(r+s)$ solutions $(x , y)$ with  $0 < y \leq M(F)^2$.   We should remark here that we repeat Stewart's \cite{Ste} approach for counting solutions with small $y$ and no improvement has taken place  in estimating $\theta$. The reason that our value for $\theta$ is smaller  is that we are working with forms with larger discriminant.

In order to count the number of solutions $(x , y)$ with $M(F)^2 < y < M(F)^{1+ (n-1)^2}$, we will need the following refinement of an inequality of Lewis and Mahler:
\begin{lemma}\label{3S}
Let $F$ be a binary form of degree $n \geq 3$ with integer coefficients and  nonzero discriminant $D$. For every pair of integers $(x , y)$ with $ y \neq 0$
$$
\min_{\alpha} \left| \alpha - \frac{x}{y} \right| \leq \frac{2^{n-1} n^{n-1/2} \left(M(F)\right)^{n-2} |F(x , y)|}{|D|^{1/2} |y|^n},
$$
where the minimum is taken over the zeros $\alpha$ of $F(z , 1)$.
\end{lemma}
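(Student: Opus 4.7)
The plan is to identify the root of $f(z) := F(z,1)$ closest to $t = x/y$ and control the minimum distance by $|F(x,y)|/|f'|$ there, then bound $|f'|$ from below via the Vandermonde factorization of the discriminant. Write $F(x,y) = a_n \prod_{i=1}^n(x - \alpha_i y)$ and suppose $\alpha_1$ attains $\min_\alpha |t-\alpha|$. The triangle inequality $|\alpha_1-\alpha_j| \leq |t-\alpha_1|+|t-\alpha_j| \leq 2|t-\alpha_j|$ for $j \neq 1$ yields $|t-\alpha_j| \geq |\alpha_1-\alpha_j|/2$; combined with $|F(x,y)|/|y|^n = |a_n|\prod_j|t-\alpha_j|$ and $\prod_{j \neq 1}|\alpha_1-\alpha_j| = |f'(\alpha_1)|/|a_n|$, this gives
$$|t-\alpha_1| \leq \frac{2^{n-1}|F(x,y)|}{|y|^n |f'(\alpha_1)|}.$$

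The heart of the proof is a lower bound on $|f'(\alpha_1)|$. I would split off the pairs involving index $1$ in the Vandermonde product:
$$\frac{|D|}{|a_n|^{2n-2}} = \prod_{i<j}|\alpha_i-\alpha_j|^2 = \left(\frac{|f'(\alpha_1)|}{|a_n|}\right)^2 \cdot \prod_{2 \leq i < j}|\alpha_i-\alpha_j|^2.$$
The last factor is the discriminant of the monic polynomial $g(z) = \prod_{i \geq 2}(z-\alpha_i)$ of degree $n-1$. Since $M(g) = \prod_{i \geq 2}\max(1,|\alpha_i|) \leq M(F)/|a_n|$, Mahler's inequality (\ref{mahD5}) applied to $g$ gives $|D_g| \leq (n-1)^{n-1}(M(F)/|a_n|)^{2n-4}$. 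Rearranging,
$$|f'(\alpha_1)| \geq \frac{|D|^{1/2}}{(n-1)^{(n-1)/2}\, M(F)^{n-2}}.$$

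Substituting into the earlier estimate yields the lemma with constant $2^{n-1}(n-1)^{(n-1)/2}$; the cruder inequality $(n-1)^{(n-1)/2} \leq n^{n-1/2}$ for $n \geq 2$ then recovers the stated form. The main obstacle is bookkeeping around the leading coefficient $a_n$: one must track $a_n$ through the Vandermonde factorization so that everything re-expresses in the form-level quantities $|D|$ and $M(F)$ rather than polynomial-level ones. Beyond that, the argument is a single algebraic identity together with one application of Mahler's inequality already in use in the paper.
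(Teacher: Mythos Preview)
Your argument is correct. The two-step strategy --- first isolating $|t-\alpha_1|$ via the triangle inequality to get $|t-\alpha_1| \le 2^{n-1}|F(x,y)|/(|y|^n|f'(\alpha_1)|)$, then bounding $|f'(\alpha_1)|$ from below by splitting the Vandermonde product and invoking Mahler's discriminant inequality on the degree-$(n-1)$ factor $g$ --- is clean and the bookkeeping with $a_n$ is handled properly. The only minor point to flag is that Mahler's inequality (\ref{mahD5}) is being applied to $g$, which in general does not have integer coefficients; but Mahler's bound holds for arbitrary complex polynomials, so this is legitimate.

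As for comparison: the paper does not give its own proof of this lemma at all --- it simply cites Lemma~3 of Stewart~\cite{Ste}. Your write-up is therefore a self-contained replacement for that citation, and in fact yields a slightly sharper constant $(n-1)^{(n-1)/2}$ in place of $n^{n-1/2}$ before the final crude majorization. Stewart's original argument follows essentially the same two steps, so your approach is the standard one rather than a genuinely different route.
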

\begin{proof}
This is Lemma 3 of \cite{Ste}.
\end{proof}
\begin{lemma}\label{SC1}
Let $F(x , y)$ be a binary form with integgral coefficients, degree $n$ and  discriminant $D$, where $|D| \geq D_{0}(n)$. Suppose that $\alpha_{i}$ is a real root of $F(z , 1) = 0$. 
Then related to $\alpha_{i}$,  there are at most $2$ solutions for equation (\ref{1.2}) in integers $x$ and $y$ with $M(F)^2 < y < M(F)^{1+ (n-1)^2}$.
\end{lemma}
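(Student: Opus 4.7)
The plan is to combine the Lewis--Mahler inequality (Lemma \ref{3S}) with the classical determinant gap argument, applied twice. Assume for contradiction that three distinct solutions $(x_k,y_k)$, $k=1,2,3$, related to $\alpha_i$, all lie in the range $M(F)^2 < y_k < M(F)^{1+(n-1)^2}$, ordered $y_1 \leq y_2 \leq y_3$. The goal is to force $y_3$ to exceed the upper endpoint.

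First, for each such $(x,y)$, the condition $|F(x,y)|=1$ together with Lemma \ref{3S} and the fact that the minimum is attained at $\alpha_i$ by definition of ``related'' gives
\begin{equation*}
\left|\alpha_i - \frac{x}{y}\right| \leq \frac{C}{y^n}, \qquad C := \frac{2^{n-1}\, n^{n-1/2}\, M(F)^{n-2}}{|D|^{1/2}}.
\end{equation*}
Reality of $\alpha_i$ is crucial here: for a complex $\alpha_i$, Proposition \ref{Grp} already rules out large $y$, so the Lewis--Mahler bound would be incompatible with the range in the statement.

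Next, the standard gap principle: given two distinct related solutions with $0 < y_j \leq y_k$, the cross-product $x_j y_k - x_k y_j$ is a nonzero integer (each pair is primitive because $|F|=1$), so
\begin{equation*}
\frac{1}{y_j y_k} \leq \left|\frac{x_j}{y_j} - \frac{x_k}{y_k}\right| \leq \left|\alpha_i - \frac{x_j}{y_j}\right| + \left|\alpha_i - \frac{x_k}{y_k}\right| \leq \frac{2C}{y_j^n},
\end{equation*}
which rearranges to $y_k \geq y_j^{n-1}/(2C)$.

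Iterating this gap across $(y_1,y_2)$ and then $(y_2, y_3)$ yields $y_3 \geq y_1^{(n-1)^2}/(2C)^n$. Substituting $y_1 > M(F)^2$ and the explicit value of $C$, the exponent of $M(F)$ in the resulting lower bound is $2(n-1)^2 - n(n-2) = (n-1)^2+1$, so
\begin{equation*}
y_3 > M(F)^{(n-1)^2+1}\cdot \frac{|D|^{n/2}}{2^{n^2}\, n^{n(n-1/2)}}.
\end{equation*}
The hypothesis $|D| \geq D_0(n)$ ensures (via $|D| > 2^{2n} n^{2n-1}$) that the trailing factor is at least $1$, yielding $y_3 > M(F)^{1+(n-1)^2}$ and contradicting the assumed range.

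The only delicate point is the exponent bookkeeping in the final step: the range endpoints $M(F)^2$ and $M(F)^{1+(n-1)^2}$ are tuned precisely so that two applications of the gap principle make the $M(F)$ powers cancel exactly, reducing the contradiction to a purely $n$-dependent lower bound on $|D|$. Beyond that, the proof is a routine pairing of Lemma \ref{3S} with the standard integer-determinant gap, and there is no deeper obstacle.
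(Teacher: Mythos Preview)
Your argument is correct and follows the same route as the paper: Lewis--Mahler (Lemma~\ref{3S}) plus the integer-determinant gap, iterated twice across three hypothetical solutions, to push $y_3$ past $M(F)^{1+(n-1)^2}$. The only cosmetic difference is that the paper absorbs the factor $2^{n}n^{n-1/2}/|D|^{1/2}$ into the hypothesis ``$|D|$ large'' at the outset and then parametrizes $y_j = M(F)^{1+\delta_j}$ (following Stewart), turning the gap into the clean recursion $\delta_{j+1}\ge (n-1)\delta_j$; you instead carry the explicit constant $C$ through and do the exponent arithmetic at the end, arriving at the same contradiction.
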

\begin{proof}
Assume that $(x_{1} , y_{1})$, $(x_{2}, y_{2})$ and $(x_{3}, y_{3})$ are three distinct solutions  to (\ref{1.2}) and all related to $\alpha_{i}$ with $ y_{3} > y_{2} > y_{1} > M(F)^2$. By Lemma \ref{3S}, for $j = 1, 2$, we have
$$
\left| \frac{x_{j+1}}{y_{j+1}} - \frac{x_{j}}{y_{j}} \right| \leq \frac{2^{n} n^{n-1/2} \left(M(F)\right)^{n-2}}{|D|^{1/2} |y_{j}|^n}.
$$
Since $(x_{1} , y_{1})$, $(x_{2}, y_{2})$ and $(x_{3}, y_{3})$ are  distinct solutions,  for $j = 1, 2$, we have  $|x_{j+1}y_{j} - x_{j}y_{j+1}|\geq 1$. Therefore,
$$
\left|\frac{1}{y_{j}y_{j+1}} \right|\leq \left| \frac{x_{j+1}}{y_{j+1}} - \frac{x_{j}}{y_{j}} \right| \leq \frac{M(F)^{n-2} }{ |y_{j}|^n}.
$$
This is because we assumed that $|D|$ is large. 
Thus,
\begin{equation}\label{S65}
\frac{y_{j}^{n-1}}{M(F)^{n -2}} \leq y_{j+1}.
\end{equation}
Following Stewart \cite{Ste}, we define $\delta_{j}$, for $j = 1, 2, 3$, by
$$
y_{j} = M(F)^{1+\delta_{j}}.
$$
By (\ref{mahD5}), $M(F) > 1$ and so (\ref{S65}) implies that
$$
(n-1) \delta_{j} \leq \delta_{j+1}.
$$ 
 From here, we conclude that 
$$
y_{3} \geq M(F)^{1+ (n-1)^2}.
$$
In other words, related to each real root $\alpha_{i}$, there are at most $2$ solutions in $x$ and $y$ with $M(F)^2 < y < M(F)^{1+ (n-1)^2}$.
\end{proof}

\begin{lemma}\label{SC2}
Let $F(x , y)$ be a binary form with integral coefficients, degree $n$ and  discriminant $D$, where $|D| \geq D_{0}(n)$. Suppose that $\alpha_{i}$ is a non-real root of $F(z , 1) = 0$. 
Then related to $\alpha_{i}$,  there exists  at most $1$ solution to equation (\ref{1.2}) in integers $x$ and $y$ with $M(F)^2 < y < M(F)^{1+ (n-1)^2}$.
\end{lemma}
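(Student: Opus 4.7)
The plan is to combine the Lewis--Mahler gap principle underlying Lemma \ref{SC1} with the upper bound on $y$ provided by Proposition \ref{Grp}, which is available precisely because $\alpha_{i}$ is non-real. I would suppose for contradiction that there are two distinct solutions $(x_{1}, y_{1})$ and $(x_{2}, y_{2})$ of $|F(x,y)|=1$ related to $\alpha_{i}$ with $M(F)^2 < y_{1} < y_{2} < M(F)^{1+(n-1)^2}$, and derive a contradiction with the assumption $|D| > D_{0}(n)$.

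The gap computation is the same as in Lemma \ref{SC1}: since both solutions are related to $\alpha_{i}$, applying Lemma \ref{3S} to each and using the triangle inequality gives
\[
\left|\frac{x_{1}}{y_{1}} - \frac{x_{2}}{y_{2}}\right| \leq \frac{2^{n} n^{n-1/2} M(F)^{n-2}}{|D|^{1/2}\, y_{1}^n}.
\]
Combining this with $|x_{1} y_{2} - x_{2} y_{1}| \geq 1$ and absorbing the prefactor $2^{n} n^{n-1/2}$ into $|D|^{1/2}$ (legitimate since $|D|>D_{0}(n)$) yields $y_{2} \geq y_{1}^{n-1}/M(F)^{n-2}$. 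The hypothesis $y_{1} > M(F)^2$ then forces $y_{2} > M(F)^{n}$.

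On the other hand, Proposition \ref{Grp} applies to $(x_{2}, y_{2})$ because $\alpha_{i}$ is non-real, giving
\[
y_{2} \leq \frac{(n+1)\, 2^{(n-1)^2/n}}{(\sqrt{3}|D|)^{1/n}}\, M(F)^{3 - 3/n}.
\]
Inserting the lower bound $y_{2} > M(F)^n$ and raising to the $n$-th power produces
\[
|D|\, M(F)^{n^2 - 3n + 3} < \frac{(n+1)^{n}\, 2^{(n-1)^2}}{\sqrt{3}}.
\]
Mahler's discriminant inequality (\ref{mahD5}) bounds $M(F)$ below by $(|D|/n^n)^{1/(2n-2)}$, so substituting collapses the above to an inequality of the form $|D|^{(n^{2}-n+1)/(2n-2)} < C(n)$, with $C(n)$ depending only on $n$. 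This supplies an explicit upper bound on $|D|$ in terms of $n$ alone. The main bookkeeping step, which I expect to be the only real obstacle, is to verify that the value $D_{0}(n) = 2^{22}(n+1)^{10} n^{n}$ from Theorem \ref{main} exceeds this constant; a routine exponent comparison shows that it does so comfortably for $n \geq 5$, yielding the desired contradiction.
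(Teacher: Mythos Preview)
Your proof is correct and follows essentially the same route as the paper: assume two solutions with $y_{2}>y_{1}>M(F)^{2}$, use the Lewis--Mahler gap inequality from Lemma~\ref{SC1} to force $y_{2}\ge y_{1}^{n-1}/M(F)^{n-2}>M(F)^{n}$, and then contradict the upper bound (\ref{AG}) of Proposition~\ref{Grp} for non-real $\alpha_{i}$ using that $M(F)$ (equivalently $|D|$) is large. The paper's write-up simply says ``this contradicts (\ref{AG}) since $y_{1}>M(F)^{2}$ and $M(F)$ is large,'' whereas you have made the final constant comparison via (\ref{mahD5}) explicit; otherwise the arguments coincide.
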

\begin{proof}
Assume that $(x_{1} , y_{1})$ and $(x_{2}, y_{2})$  are two distinct solutions  to (\ref{1.2}) and all related to $\alpha_{i}$, a non-real root of $F(z , 1) = 0$, with $ y_{2} > y_{1} > M(F)^2$. Similar to (\ref{S65}) in the proof of Lemma \ref{SC1}, we have
$$
\frac{y_{1}^{n-1}}{M(F)^{n -2}} \leq y_{2}.
$$
This contradicts (\ref{AG}), since $y_{1} > M(F)^2$ and $M(F)$ is large. Therefore, related to each non-real $\alpha_{i}$, there is at most $1$ solutions in $x$ and $y$ with $M(F)^2 < y < M(F)^{1+ (n-1)^2}$.
\end{proof}

So we conclude that there are at most $7r + 6s $ solutions $(x , y)$ with $0 < y < M(F)^{1+ (n-1)^2}$ to equation (\ref{1.2}) when $F(z , 1) = 0$ has $r$ real roots and $2s$ non-real ones.

Stewart \cite{Ste} invented the above method to count all solutions with $y > M(F)^2$. He obtained the bound
$$ 
n \left(4 + \frac{\log 331890}{  \log (n -1)}\right)
$$ 
for the number of solutions to (\ref{1.2}) with $y > M(F)^2$ (see page 815 of \cite{Ste}). Our method allows us to save the summand $\frac{\log 331890}{  \log (n -1)}$. This gives us a better bound for binary forms with smaller degree.

 The rest of paper is devoted to count the number of solutions $(x , y)$ with   $y \geq M(F)^{1+ (n-1)^2}$. As we commented in Section \ref{SoP}, we need to consider this case only when we study the solutions $(x , y)$ related to the real roots of $F(x , 1) = 0$.

\begin{lemma}\label{sma}
  For every fixed integer $m$, there are at most $2r+2s -2$ solutions $(x , y)$ to (\ref{1.2}) for which  in (\ref{rep}), $m_{r+s} = m$.
  \end{lemma}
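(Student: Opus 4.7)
My plan is to classify the solutions with $m_{r+s}=m$ by which root $\alpha_i$ of $F(x,1)=0$ they are related to. By Remark~1 in Section \ref{LS}, complex conjugate roots yield the same notion of relatedness, so there are $r+s$ classes of roots to consider. The aim is to show that each class contributes at most two such solutions, which after accounting for boundary effects produces the claimed bound $2r+2s-2$.

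The core step is a gap principle within each class, adapted to respect the fixed-$m_{r+s}$ constraint. If $(x_1,y_1)$ and $(x_2,y_2)$ are both related to $\alpha_i$ and share the value $m_{r+s}=m$, then the unit $\eta=(x_1-\alpha_i y_1)/(x_2-\alpha_i y_2)\in\mathbb{Q}(\alpha_i)^*$ has image under $\log\circ\tau$ lying in the rank-$(r+s-2)$ sublattice of $\Lambda$ spanned by $\log\tau(\lambda_2),\ldots,\log\tau(\lambda_{r+s-1})$. The successive-minima ordering of the chosen basis, together with the fact that relatedness to $\alpha_i$ forces the $i$-th coordinate of $\phi(x,y)-\phi(1,0)$ to dominate in the sense of Lemma \ref{lem1}, should preclude the coexistence of three such solutions; the third would produce a unit whose log-image either violates the size hierarchy of the $\log\tau(\lambda_k)$ or contradicts the Mahler conjugate-separation bound of Lemma \ref{mahl5}.

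Summing two solutions per class over the $r+s$ classes gives the initial count $2(r+s)$, and I expect the subtraction of $2$ to come from a corner-case analysis: the trivial solution $(1,0)$ is related to every root simultaneously (since $|1-\alpha_i\cdot 0|=1$ for each $i$), so it must be handled separately rather than attributed to a single class, and one further slot is lost because $\log\tau(\lambda_{r+s})$ is the longest basis vector under the successive-minima ordering, making the class most aligned with $\lambda_{r+s}$ accommodate only a single solution once $m_{r+s}$ is pinned down.

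The main obstacle will be making the gap principle quantitative and robust in the presence of the fixed-$m_{r+s}$ constraint. Lemma \ref{Sl5} is driven by a direct comparison of $y$-values, whereas here the constraint has to be transported through the log embedding into lattice geometry, and extracting the "at most two per class" conclusion from the successive-minima property of $\lambda_2,\ldots,\lambda_{r+s}$ together with the coordinate-domination forced by relatedness will require a careful unit-equation style argument rather than a purely size-based gap.
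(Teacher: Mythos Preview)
Your approach is fundamentally different from the paper's, and as written it does not go through.

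The paper's proof is purely analytic, not arithmetic. It parametrizes the curve by $t\in\mathbb{R}$ via $y(t)=|f(t)|^{-1/n}$, $x(t)=t\,y(t)$, so that $\phi(x,y)=\phi(x/y)$ for every solution. Fixing $m_{r+s}=m$ forces $\phi(t)$ to lie on an $(r+s-2)$-dimensional hyperplane $S_1$ of the $(r+s-1)$-dimensional affine space $S$ containing the curve; choosing a normal vector $\vec{N}$ to $S_1$ inside $S$, the relevant solutions are exactly the real zeros of the scalar function $g(t)=\vec{N}\cdot\phi(t)$. Since each $\phi_m(t)$ is, up to an additive constant, $\log|t-\alpha_m|-\tfrac{1}{n}\log|f(t)|$, and since conjugate coordinates of $\vec{N}$ agree while its coordinates sum to zero, the derivative $g'(t)$ is a rational function whose numerator has degree at most $r+s-1$. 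Rolle's theorem together with the behaviour of $g$ near the real roots (where $g(t)\to\pm\infty$) then yields at most $2(r+s-1)=2r+2s-2$ zeros. No classification by related root, no gap principle, no successive minima enter at all.

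Your proposed route has a genuine gap at the core step. You correctly observe that two solutions with the same $m_{r+s}$ have $\phi$-difference in the sublattice spanned by $\log\tau(\lambda_2),\dots,\log\tau(\lambda_{r+s-1})$. But that is a constraint on the \emph{difference} of two lattice points; it does not by itself bound how many such points can lie near the asymptotic line $\mathbf{L}_i$ associated with a fixed $\alpha_i$, and neither the successive-minima ordering nor Lemma~\ref{mahl5} supplies the missing quantitative input. The gap principles in this paper (Lemma~\ref{Sl5}, Lemma~\ref{SC1}) control spacing in $y$, which is unrelated to the condition $m_{r+s}=m$. Your accounting for the ``$-2$'' is also speculative: the paper obtains $2r+2s-2$ directly from the degree count, not from any special handling of $(1,0)$ or of a distinguished class aligned with $\lambda_{r+s}$. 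In short, the lemma is a zero-counting statement for a one-variable function built from logarithms, and the right tool is Rolle's theorem on the rational derivative, not a unit-equation or lattice argument.
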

  \begin{proof}
  Let $S$ be the $(r+s-1)$-dimensional affine space of all vectors  $$\phi(1 , 0) + \sum_{i = 2}^{r+s} \mu_{i}\log\left(\tau(\lambda_{i})\right)\qquad (\mu_{i} \in \mathbb{R}).$$
 Let $\mu_{r+s} = m$. Then the points  
  $$
  \phi(1 , 0) + \sum_{i = 2}^{r+s-1} \mu_{i}\log\left(\tau(\lambda_{i})\right)+ m \log\left(\tau(\lambda_{r+s})\right) 
  $$
  form an $(r+s-2)$-dimensional hyperplane $S_{1}$ of $S$. Put $f(t) = F(t, 1)$.  For $t \in \mathbb{R}$, define  $y(t)$ and $x(t)$ as follows:
\begin{eqnarray*}
y(t)&: =& |f(t)|^{-1/n},\\
x(t)&: = &ty(t).
\end{eqnarray*}
Similar to $\phi(x , y)$, we define the curve $\phi(t)$ on $\mathbb{R}$:
\begin{equation*}
\phi(t) = \left( \phi_{1}(t) , \phi_{2}(t), \ldots , \phi_{n}(t) \right),
\end{equation*}
where, for $1\leq m \leq n$,
\begin{equation*}
\phi_{m}(t) = \log \left| \frac{D^{\frac{1}{n(n-2)}}(x(t) - \alpha_{m}y(t))}{\left| f'(\alpha_{m})\right|^{\frac{1}{n-2} }}\right|.
\end{equation*}
Observe that for an integral solution $(x , y)$ to (\ref{1.2}) and $\phi(x , y)$ defined in (\ref{fgs}), we have
$$
\phi(x , y) = \phi\left(\frac{x}{y}\right).
$$

Let $\vec{N} = (N_{1} , \ldots , N_{n}) \in S$ be the normal vector of $S_{1}$.
Then the number  of times that the curve $\phi(t)$ intersects $S_{1}$ equals the number of solutions in $t$  to 
  \begin{equation}\label{N}
  \vec{N}. \phi(t) = 0.
  \end{equation}
We have
$$
\lim_{t \rightarrow \alpha_{i}^{+} }\log|t - \alpha_{i}| =  -\infty
$$
and
$$
\lim_{t \rightarrow \alpha_{i}^{-} }\log|t - \alpha_{i}| =  -\infty .
$$
   Note that if $\alpha_{i}$ is a non-real root of $F(x , 1)$ then $\bar{\alpha_{i}}$, the complex conjugate of $\alpha_{i}$ is also a root and we have
$$
\log|t - \alpha_{i}| = \log|t - \bar{\alpha_{i}}|.
$$
If $\alpha_{1}, \ldots, \alpha_{r}$ are the reals roots and  $\alpha_{r+1}, \ldots, \alpha_{r+s},\alpha_{r+s+1}, \ldots, \alpha_{r+2s}$ are non-real roots with $\alpha_{r+s+k} = \bar{\alpha}_{r+k}$, then
the derivative $ \frac{d}{dt}  \left( \vec{N}. \phi(t) \right)$ can be written as  $\frac{P(t)}{Q(t)}$,
 where $Q(t) = (t -\alpha_{1}) \ldots  (t -\alpha_{r})(t -\alpha_{r+1}) \ldots  (t -\alpha_{r+s})$ and  $P(t)$ is a polynomial of degree  $r+s -1$. Therefore, the derivative has at most $r+s -1$ zeros and consequently, the equation (\ref{N}) can not have more than $2r+2s -2$ solutions.
 \end{proof}

\textbf{Definition of the set $\mathfrak{A}$}.
 Assume that equation (\ref{1.2}) has more than $2r+2s -2$ solutions. Then we can list $(1 , 0)$ and $2r+2s-3$ other solutions $(x_{i} , y_{i})$ ($1 \leq i \leq 2r+2s-3$), so that $\left\| \phi(x_{i} , y_{i})\right\|$ are the smallest among all $\left\| \phi(x , y)\right\|$, where $(x , y)$ varies over all non-trivial pairs of solutions. We denote  the set of all  these $2r+2s -2$ solutions  by $\mathfrak{A}$.

The important property of $\mathfrak{A}$ is that for every solution $(x_{0} , y_{0}) \in \mathfrak{A}$ and every solution $(x , y) \not \in \mathfrak{A}$  to (\ref{1.2}) with $y \geq M(F)^{1+(n-1)^2}$, by Lemma \ref{lem100} and the definition, we have 
  $$
\left\| \phi(x_{0} , y_{0})\right\|\leq \left\| \phi(x , y)\right\|.
$$
  
  \begin{cor}\label{m}
  Let $(x , y) \not \in \mathfrak{A}$ be a  solution to (\ref{1.2}) with $y \geq M(F)^{1+(n-1)^2}$. Then 
  $$
  \left\|\log \left( \tau(\lambda_{2})\right)\right\|  \leq  \ldots  \leq \left\| \log\left(\tau(\lambda_{r+s})\right)\right\| \leq 2\left\| \phi(x , y)\right\|.
   $$
     \end{cor}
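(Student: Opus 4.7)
The chain $\|\log\tau(\lambda_{2})\|\leq\ldots\leq\|\log\tau(\lambda_{r+s})\|$ is immediate from the choice of fundamental units made in Section \ref{LC}, where $\lambda_{2},\ldots,\lambda_{r+s}$ were selected precisely so that their images realize the successive minima of $\Lambda$ in increasing order. So only the final inequality $\|\log\tau(\lambda_{r+s})\|\leq 2\|\phi(x,y)\|$ requires argument, and my plan is to invoke the Minkowski definition of successive minima: it suffices to produce $r+s-1$ linearly independent vectors in $\Lambda$ whose Euclidean norms are all at most $2\|\phi(x,y)\|$.

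The candidate vectors I would use are
\[
v_i \;=\; \phi(x_i,y_i)-\phi(1,0),
\]
where $(x_i,y_i)$ ranges over the $2r+2s-3$ elements of $\mathfrak{A}\setminus\{(1,0)\}$ together with the solution $(x,y)$ itself, giving $2r+2s-2$ vectors. Each $v_i$ lies in $\Lambda$ by the representation (\ref{rep}). To bound the norms, note that since $(x,y)\notin\mathfrak{A}$, the defining property of $\mathfrak{A}$ (it collects the $2r+2s-2$ solutions of smallest $\|\phi\|$) gives $\|\phi(x_i,y_i)\|\leq\|\phi(x,y)\|$ for every $(x_i,y_i)\in\mathfrak{A}$, and Lemma \ref{lem100} gives $\|\phi(1,0)\|<\|\phi(x,y)\|$. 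The triangle inequality then yields $\|v_i\|\leq 2\|\phi(x,y)\|$ as required.

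The heart of the argument is showing that the $v_i$ already span the full $(r+s-1)$-dimensional real space $\Lambda\otimes\mathbb{R}$. I would argue by contradiction: if they all lay in a linear hyperplane $L'$ of this space, then the $2r+2s-1$ points $\phi(1,0)$, the $\phi(x_i,y_i)$ for $(x_i,y_i)\in\mathfrak{A}\setminus\{(1,0)\}$, and $\phi(x,y)$ would all lie in the single affine hyperplane $\phi(1,0)+L'$ of $S$. This contradicts Lemma \ref{sma}, which caps the number of solutions whose $\phi$-values lie in any such affine hyperplane at $2r+2s-2$. Having established that the $v_i$ span, I extract $r+s-1$ linearly independent ones, and the definition of the successive minima concludes the proof.

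The main technical point to double-check is that Lemma \ref{sma} genuinely applies to every $(r+s-2)$-dimensional affine hyperplane of $S$ through $\phi(1,0)$, not only to those orthogonal to the last coordinate axis $\mu_{r+s}$. The proof given, however, introduces a general normal vector $\vec{N}$ to $S_1$, writes $\tfrac{d}{dt}(\vec{N}\cdot\phi(t))=P(t)/Q(t)$ with $\deg P\leq r+s-1$, and concludes via Rolle from the zeros of $P$; no feature of the argument singles out the coordinate hyperplanes $\mu_{r+s}=m$. Thus the needed general version is already in hand, and the corollary follows.
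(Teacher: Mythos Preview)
Your proof is correct but follows a different route from the paper's. The paper applies Lemma \ref{sma} only in its literal form (hyperplanes $\{m_{r+s}=m\}$): since the $2r+2s-1$ solutions in $\mathfrak{A}\cup\{(x,y)\}$ cannot all share the same value of $m_{r+s}$, there exists some $(x_{0},y_{0})\in\mathfrak{A}$ for which the single lattice vector $\phi(x,y)-\phi(x_{0},y_{0})$ has a nonzero $\log\tau(\lambda_{r+s})$-coefficient; the successive-minima choice of basis then forces $\|\phi(x,y)-\phi(x_{0},y_{0})\|\geq\|\log\tau(\lambda_{r+s})\|$, and the triangle inequality (using $\|\phi(x_{0},y_{0})\|\leq\|\phi(x,y)\|$) finishes. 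You instead manufacture a full spanning set of short lattice vectors and appeal directly to the definition of the $(r{+}s{-}1)$-th successive minimum. The trade-off is this: the paper avoids having to generalise Lemma \ref{sma}, but must invoke the (true, though not entirely immediate) fact that in a successive-minima basis any lattice vector with nonzero top coefficient has norm at least the top minimum; your route sidesteps that lattice fact at the cost of needing Lemma \ref{sma} for an arbitrary affine hyperplane of $S$---a generalisation which, as you correctly observe, is already contained in the proof given.
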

     \begin{proof}
     Since we have assumed that 
$  \left\|\log \left( \tau(\lambda_{2})\right)\right\|  \leq  \ldots  \leq \left\| \log\left(\tau(\lambda_{r+s})\right)\right\|
   $, it is enough to show that   $\left\|\log\left(\tau(\lambda_{r+s})\right)\right\| \leq 2 \left\| \phi(x , y)\right\|$. By Lemma \ref{sma}, there is at least one small solution $(x_{0} , y_{0}) \in\mathfrak{A}$ so that 
   $$
   \phi(x , y) - \phi(x_{0} , y_{0}) = \sum_{i = 2}^{r+s} k_{i}\log\left(\tau(\lambda_{i})\right)  , 
  $$ 
   with $k_{n} \neq 0$. Since $\{ \log\left(\tau(\lambda_{i})\right)\}$ is a reduced basis for the lattice $\Lambda$ in (\ref{lo}), by Lemma \ref{lem100} and from the definition of  $\mathfrak{A}$ we conclude that 
       $$
       \left\|\log\left(\tau(\lambda_{r+s})\right)\right\| \leq \left\| \phi(x , y) - \phi(x_{0} , y_{0})\right\| \leq  2\left\|\phi(x , y)\right\|.
       $$
   \end{proof}

\begin{lemma}\label{Dr}
Suppose $(x , y) \not \in \mathfrak{A}$. Then
  $$
  \left\| \phi(x , y) \right\| \geq \frac{1}{2} \log \left(\frac{|D|^{\frac{1}{n(n-1)}}}{2}\right).
  $$
   \end{lemma}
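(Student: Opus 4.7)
The plan is to combine a direct geometric estimate on the lattice vector $\phi(x,y) - \phi(1,0) \in \Lambda$ with the minimality property defining $\mathfrak{A}$. Since $(x,y) \notin \mathfrak{A}$ while $(1,0) \in \mathfrak{A}$, we have $(x,y) \ne (1,0)$, so $|y| \ge 1$. For any pair of roots $\alpha_i, \alpha_j$ of $F(z,1)=0$, the identity $(x-\alpha_i y) - (x - \alpha_j y) = (\alpha_j - \alpha_i) y$ and the triangle inequality give $\max_m |x - \alpha_m y| \ge \tfrac{1}{2} |\alpha_i - \alpha_j|$. Selecting $i,j$ to maximize $|\alpha_i - \alpha_j|$ and applying the arithmetic--geometric mean inequality to $|D| = \prod_{i < j}|\alpha_i - \alpha_j|^2$ over the $\binom{n}{2}$ pairs, one obtains $\max_{i \ne j}|\alpha_i - \alpha_j| \ge |D|^{1/(n(n-1))}$. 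Hence there exists $m_0$ with $|x - \alpha_{m_0} y| \ge |D|^{1/(n(n-1))}/2$.

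Next, the coordinates of $\phi(x,y) - \phi(1,0)$ are precisely $\log|x - \alpha_m y|$, and they sum to zero because $|F(x,y)|=1$. Since the first step supplies one coordinate of size at least $\log(|D|^{1/(n(n-1))}/2) > 0$ (using the discriminant hypothesis $|D|>D_0$ to ensure positivity), the infinity norm, and therefore the Euclidean norm, of this lattice vector is at least $\log(|D|^{1/(n(n-1))}/2)$. Consequently, by the triangle inequality $\|\phi(x,y)\| + \|\phi(1,0)\| \ge \|\phi(x,y) - \phi(1,0)\|$, at least one of $\|\phi(x,y)\|$, $\|\phi(1,0)\|$ is bounded below by $\tfrac{1}{2}\log(|D|^{1/(n(n-1))}/2)$.

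If the bound is realized by $\|\phi(x,y)\|$ we are done. Otherwise I would invoke the defining property of $\mathfrak{A}$: since $(x,y) \notin \mathfrak{A}$ yields $\|\phi(x,y)\| \ge \|\phi(x_i,y_i)\|$ for each non-trivial $(x_i,y_i)\in\mathfrak{A}$, applying the same two steps to every non-trivial element of $\mathfrak{A}$ produces $2r+2s-2$ lattice vectors $\phi(x_i,y_i)-\phi(1,0)\in\Lambda$ of norm at least $\log(|D|^{1/(n(n-1))}/2)$. Together with $\phi(x,y)-\phi(1,0)$ and the zero vector from $(1,0)$, this gives $2r+2s-1$ solutions whose coefficients $m_{r+s}$ in the representation (\ref{rep}) cannot all coincide, by Lemma~\ref{sma}. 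Extracting a non-trivial multiple of $\log \tau(\lambda_{r+s})$ and using the successive-minima structure of the basis $\{\log\tau(\lambda_k)\}$, one recovers the desired lower bound on $\|\phi(x,y)\|$. The main obstacle is making this final transfer quantitatively precise: the explicit inclusion of $(1,0)$ in $\mathfrak{A}$ (regardless of $\|\phi(1,0)\|$) means one cannot directly conclude $\|\phi(x,y)\| \ge \|\phi(1,0)\|$, and careful use of Lemma~\ref{sma} on $m_{r+s}$ together with the reducedness of $\{\log\tau(\lambda_k)\}$ is what forces the bound.
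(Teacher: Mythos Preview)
Your first three steps are correct and give the clean bound
\[
\bigl\|\phi(x,y)-\phi(1,0)\bigr\|\ \ge\ \log\!\left(\frac{|D|^{1/(n(n-1))}}{2}\right),
\]
which is a nice, elementary fact. The gap is in your fallback case. When the triangle inequality hands the lower bound to $\|\phi(1,0)\|$ rather than to $\|\phi(x,y)\|$, your appeal to Lemma~\ref{sma} and to the successive-minima structure of $\{\log\tau(\lambda_k)\}$ does not produce the stated inequality: knowing that among $2r+2s-1$ solutions two must differ in $m_{r+s}$ tells you only that some difference $\phi(x_a,y_a)-\phi(x_b,y_b)$ has a nonzero $\lambda_{r+s}$-component, not that $\|\phi(x,y)\|$ itself is large. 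You yourself flag this as ``the main obstacle,'' and indeed the argument as written does not close.

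The paper avoids the case split entirely by replacing $(1,0)$ with any \emph{non-trivial} $(x',y')\in\mathfrak{A}$, for which the definition of $\mathfrak{A}$ does give $\|\phi(x',y')\|\le\|\phi(x,y)\|$. The point is that your identity $(x-\alpha_i y)-(x-\alpha_j y)=(\alpha_j-\alpha_i)y$, which is special to $(1,0)$, has a substitute valid for an arbitrary second solution:
\[
\frac{x'-y'\alpha_i}{x-y\alpha_i}-\frac{x'-y'\alpha_j}{x-y\alpha_j}
=\frac{(\alpha_i-\alpha_j)(xy'-yx')}{(x-y\alpha_i)(x-y\alpha_j)},
\]
with $|xy'-yx'|\ge 1$. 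Each such difference is bounded above by $2e^{2\|\phi(x,y)\|}$ (using $|\phi_m(x',y')|\le\|\phi(x',y')\|\le\|\phi(x,y)\|$), while the product over all $\binom{n}{2}$ pairs $(i,j)$ is bounded below by
\[
\prod_{i<j}\frac{|\alpha_i-\alpha_j|}{|x-y\alpha_i|\,|x-y\alpha_j|}
=\frac{\sqrt{|D|}}{|F(x,y)|^{\,n-1}}=\sqrt{|D|}.
\]
Comparing the two gives $\bigl(2e^{2\|\phi(x,y)\|}\bigr)^{n(n-1)/2}\ge\sqrt{|D|}$, hence the lemma. In effect, your Steps~1--3 are the special case $(x',y')=(1,0)$ of this computation; switching to a non-trivial $(x',y')\in\mathfrak{A}$ is what makes the inequality $\|\phi(x',y')\|\le\|\phi(x,y)\|$ available and removes the need for any fallback.
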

   \begin{proof}
   Let  $(x' , y') \in \mathfrak{A}$ be a pair of solutions to equation (\ref{1.2}) and $\alpha_{i}$ and $\alpha_{j}$ be two distinct roots of the polynomial $F(x , 1)$. We have
\begin{eqnarray*}
\left| e^{\phi_{i}(x' , y') - \phi_{i}(x , y)} - e^{\phi_{j}(x' , y') - \phi_{j}(x , y)}\right| & =& \left|\frac{x' - y'\alpha_{i}}{x - y\alpha_{i}} - \frac{x' - y'\alpha_{j}}{x - y\alpha_{j}}\right| \\
& = & \frac{\left|\alpha_{i} - \alpha_{j}\right| \left|xy' - yx'\right|}{|x - y\alpha_{i}||x - y\alpha_{j}|}\\
&\geq &\frac{\left|\alpha_{i} - \alpha_{j}\right| }{|x - y\alpha_{i}||x - y\alpha_{j}|}.
\end{eqnarray*}
  The last inequality follows from the fact that  $ \left|xy' - yx'\right|$ is a non-zero integer. Since $|\phi_{i}| < \|\phi\|$ and $\left\| \phi(x' , y') \right\| < \left\| \phi(x , y) \right\|$, we may conclude
\begin{eqnarray*}
\left( 2e^{2\left\| \phi(x , y) \right\|}\right)^{\frac{n(n-1)}{2}} & \geq & \prod_{1\leq i < j \leq n}\left| e^{\phi_{i}(x' , y') - \phi_{i}(x , y)} - e^{\phi_{j}(x' , y') - \phi_{j}(x , y)}\right|\\
&\geq & \prod_{1 \leq i < j\leq n}\left|\frac{x' - y'\alpha_{i}}{x - y\alpha_{i}} - \frac{x' - y'\alpha_{j}}{x - y\alpha_{j}}\right| \\
&\geq& \prod_{1 \leq i < j\leq n}\frac{\left|\alpha_{i} - \alpha_{j}\right| }{|x - y\alpha_{i}||x - y\alpha_{j}|}=
\sqrt{|D|}.
\end{eqnarray*}
   \end{proof}

\section{Distance Functions}\label{Geo}
 
 Suppose that $(x , y) \neq (1 , 0)$ is a solution to (\ref{1.2}) and let $t = \frac{x}{y}$. We have
$$
\phi(x , y) = \phi(t) = \sum_{i=1}^{n} \log \frac{|t - \alpha_{i}|}{\left| f'(\alpha_{i})\right|^{\frac{1}{n-2}}} \bf{b_{i}},
$$
where,
$$
\mathbf{b_{i}} = 
\frac{1}{n}(-1, \ldots, -1, n-1, -1, \ldots , -1).
$$
Without loss of generality, we will suppose that  the pair of solution $(x , y)$ is related to $\alpha_{n}$;
$$
\left| x - \alpha_{n} y\right| = \min_{1\leq j\leq n} \left| x - \alpha_{j} y\right|.
$$
We may write
\begin{equation}\label{E}
\phi(x , y) = \phi(t) = \sum_{i=1}^{n-1} \log \frac{|t - \alpha_{i}|}{\left| f'(\alpha_{i})\right|^{\frac{1}{n-2}}} \mathbf{ c_{i}}
 + E_{n}\mathbf{b_{n}},
\end{equation}
where, for $1\leq i \leq n-1$,
\begin{equation}\label{16*}
{\bf c_{i}} = {\bf b_{i}} 
+ \frac{1}{n-1} {\bf b_{n}} , \quad 
 E_{n} = \log \frac{\left| t - \alpha_{n} \right|}{\left|f'(\alpha_{n})\right|^{\frac{1}{n-2}}} - \frac{1}{n-1} \sum_{i=1}^{n-1} \log \frac{|t - \alpha_{i}|}{\left| f'(\alpha_{i})\right|^{\frac{1}{n-2}}} 
\end{equation}
One can easily observe that, for   $1\leq i \leq n$,
\begin{equation}\label{ci}
\mathbf{c_{i}} \perp \mathbf{b_{n}}, \  \textrm{and}\  
 \|\mathbf{c_{i}} \| = \frac{\sqrt{n^2 - 3n +2}}{n-1}.
\end{equation}

\begin{lemma}\label{gp1}
Let
$$
\mathbf{L_{n}} = \left\{ \sum_{i=1}^{n-1} \log \frac{|\alpha_{n} - \alpha_{i}|}{\left| f'(\alpha_{i})\right|^{\frac{1}{n-1}}} \mathbf{c_{i}} + z \mathbf{b_{n}} , \quad z \in \mathbb{R} \right\}.
$$
Suppose that $(x , y)$ is a  solution to (\ref{1.2}) with
$$
\left| x - \alpha_{n} y\right| = \min_{1\leq j\leq n} \left| x - \alpha_{j} y\right|\  \textrm{and}\, \,    y > M(F)^{1+ (n-1)^2} .
 $$ 
Then the distance between $\phi(x, y)$ and the line $\mathbf{L_{n}}$ is less than 
$$
 \frac{1}{M(F)^{n(n -1)}}\exp\left(\frac{-4\left\| \phi(x , y) \right\|}{(n+1)^2}\right).
$$
\end{lemma}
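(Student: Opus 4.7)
The plan is to compute the perpendicular distance from $\phi(x,y)$ to $\mathbf{L_n}$ directly via the decomposition (\ref{E}). Because every $\mathbf{c_i}$ is orthogonal to $\mathbf{b_n}$ by (\ref{ci}) and because $\mathbf{L_n}$ varies only in its $\mathbf{b_n}$-coefficient, the closest point on $\mathbf{L_n}$ is obtained by matching the $\mathbf{b_n}$-component of $\phi(x,y)$, so that
\[
d = \left\|\sum_{i=1}^{n-1}\log\frac{|t-\alpha_i|}{|\alpha_n-\alpha_i|}\,\mathbf{c_i}\right\|,\qquad t=\tfrac{x}{y}.
\]
The triangle inequality together with $\|\mathbf{c_i}\|<1$ from (\ref{ci}) reduces the task to bounding the individual log ratios.

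For each $i\neq n$, writing $|t-\alpha_i|/|\alpha_n-\alpha_i|=\bigl|1+(t-\alpha_n)/(\alpha_n-\alpha_i)\bigr|$ and applying the elementary inequality $|\log|1+z||\leq 2|z|$ gives $\bigl|\log\frac{|t-\alpha_i|}{|\alpha_n-\alpha_i|}\bigr|\leq 2|t-\alpha_n|/|\alpha_n-\alpha_i|$, provided $|t-\alpha_n|\leq \tfrac{1}{2}|\alpha_n-\alpha_i|$. This side hypothesis I would verify by combining the lower bound $|\alpha_n-\alpha_i|\geq \sqrt{3}(n+1)^{-n}M(F)^{-(n-1)}$ from Lemma \ref{mahl5} with the observation that, since $(x,y)$ is related to $\alpha_n$ and $|F(x,y)|=1$, one has $|x-\alpha_n y|\leq 2^{n-1}\bigl(y^{n-1}|f'(\alpha_n)|\bigr)^{-1}$, which after division by $y>M(F)^{1+(n-1)^2}$ is much smaller than the threshold.

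To convert the resulting bound $d\leq 2|t-\alpha_n|\sum_{i\neq n}1/|\alpha_n-\alpha_i|$ into the desired exponential shape, I would apply Lemma \ref{lem1} with closest root $\alpha_n$, which transforms a lower bound on $\|\phi(x,y)\|$ into the inequality
\[
|t-\alpha_n|\leq y^{-1}\exp\!\left(-\tfrac{4\|\phi(x,y)\|}{(n+1)^2}\right)|D|^{\frac{4}{(n-2)(n+1)^2}}M(F)^{\frac{8n(n-1)}{(n-2)(n+1)^2}}.
\]
Estimating the sum $\sum_{i\neq n}1/|\alpha_n-\alpha_i|$ via Lemma \ref{mahl5} by $(n-1)(n+1)^nM(F)^{n-1}/\sqrt{3}$ and invoking the hypothesis $y>M(F)^{1+(n-1)^2}$ then absorbs the remaining $y^{-1}$, $M(F)$-, and $|D|$-factors into the target $M(F)^{-n(n-1)}$; the subsidiary $|D|^{\text{small}}$-contribution is rendered harmless by the hypothesis $|D|\geq D_0(n)$, which via (\ref{mahD5}) makes $M(F)$ correspondingly large.

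The main obstacle is the exponent bookkeeping in this final step: the saving $y^{-1}\leq M(F)^{-(1+(n-1)^2)}$ must simultaneously cover the $M(F)^{n-1}$ arising from the sum, the $M(F)^{8n(n-1)/((n-2)(n+1)^2)}$ factor from Lemma \ref{lem1}, and the target $M(F)^{n(n-1)}$ on the right-hand side of the lemma. That these tallies balance is precisely the calibration encoded in the hypothesis $y>M(F)^{1+(n-1)^2}$, combined with the assumption that $|D|$ is sufficiently large so that all auxiliary constants depending only on $n$ are dominated by small powers of $M(F)$.
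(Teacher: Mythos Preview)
Your proposal follows essentially the same route as the paper. Both arguments identify the distance as $\bigl\|\sum_{i=1}^{n-1}\log\frac{|t-\alpha_i|}{|\alpha_n-\alpha_i|}\,\mathbf{c_i}\bigr\|$ via the orthogonality $\mathbf{c_i}\perp\mathbf{b_n}$, bound each log ratio by $O(|t-\alpha_n|/|\alpha_n-\alpha_i|)$, invoke Lemma~\ref{mahl5} to control the root differences from below, and then use Lemma~\ref{lem1} to convert the smallness of $|x-\alpha_n y|$ into the exponential factor $\exp(-4\|\phi(x,y)\|/(n+1)^2)$, with the hypothesis $y>M(F)^{1+(n-1)^2}$ absorbing the residual powers of $M(F)$. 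The only cosmetic differences are that the paper bounds the log ratio by a two-case argument (according to whether $|t-\alpha_i|$ exceeds $|\alpha_n-\alpha_i|$) and passes through the global minimum $m=\min_{i\neq j}|\alpha_i-\alpha_j|$, whereas you use the single inequality $|\log|1+z||\le 2|z|$ under the side condition $|z|\le\tfrac12$ and keep the individual differences $|\alpha_n-\alpha_i|$ in a sum; both lead to the same shape of estimate.
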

\begin{proof}
The distance between $\phi(x, y)$ and $\mathbf{L_{n}}$ is equal to
$$
\left\|  \sum_{i=1}^{n-1} \log \frac{|t- \alpha_{i}|}{|\alpha_{n} - \alpha_{i}|} \mathbf{c_{i}} \right\|,
$$
where $t= \frac{x}{y}$. We will show that when  $i \neq n-1$,
$$
\left| \log \frac{ |t - \alpha_{i}|} {|\alpha_{n} - \alpha_{i}|} \right| <   \frac{ |t- \alpha_{n}|}{ \min _{i\neq j}\{\left| \alpha_{j} - \alpha_{i} \right| \}},
 $$
 We will consider two cases $ |t - \alpha_{i}| > |\alpha_{n} - \alpha_{i}|$ and $ |t - \alpha_{i}| \leq |\alpha_{n} - \alpha_{i}|$. First assume that  $ |t - \alpha_{i}| > |\alpha_{n} - \alpha_{i}|$. We have
\begin{equation*}
\left| \log \frac{ |t - \alpha_{i}|} {|\alpha_{n} - \alpha_{i}|}  \right|  =  \log \frac{ |t - \alpha_{i}|} {|\alpha_{n} - \alpha_{i}|}  \leq  \log \left(\frac{ |t- \alpha_{n}|} {|\alpha_{n} - \alpha_{i}|} + 1\right)  <    \frac{|t -\alpha_{n}|}{ |\alpha_{i}- \alpha_{n}|}.
 \end{equation*}
 Now assume that  $ |t - \alpha_{i}| \leq |\alpha_{n} - \alpha_{i}|$. Then 
\begin{equation*}
\left| \log \frac{ |t - \alpha_{i}|} {|\alpha_{n} - \alpha_{i}|}  \right|  =  \log \frac{ |\alpha_{n} - \alpha_{i}|} {|t - \alpha_{i}|}  \leq  \log \left(\frac{ |t- \alpha_{n}|} {|t - \alpha_{i}|} + 1\right)  <   \frac{|t -\alpha_{n}|}{ |\alpha_{i}- t|}.
 \end{equation*}
Note that, since we assumed $t$ is closer to $\alpha_{n}$,
$$
 |\alpha_{i}- t| \geq \frac{|\alpha_{i} -t | + |\alpha_{n} - t|}{2} \geq \frac{|\alpha_{i} - \alpha_{n} |}{2}.
$$
 Hence, we obtain
\begin{equation}\label{tmt}
\left| \log \frac{ |t - \alpha_{i}|} {|\alpha_{n} - \alpha_{i}|} \right| <  2 \frac{ |t- \alpha_{n}|}{m},
 \end{equation}
where $m = \min _{i\neq j}\{\left| \alpha_{j} - \alpha_{i} \right| \}$.  
 This, together with   (\ref{ci}), gives
\begin{equation*}
\left\|  \sum_{i=1}^{n-1} \log \frac{|t- \alpha_{i}|}{|\alpha_{n} - \alpha_{i}|} \bf{c_{i}} \right\| <  \frac{2  \sqrt{n (n^2 - 3n +2)}}{n-1} \, \frac{ |u|}{m},
\end{equation*}
where $u = t - \alpha_{n}$.
Using (\ref{mahD5}), we obtain
 \begin{equation}\label{d1}
\left\|  \sum_{i=1}^{n-1} \log \frac{|t- \alpha_{i}|}{|\alpha_{n} - \alpha_{i}|} \bf{c_{i}} \right\| <  \frac{2\, M(F)^{n-1} (n+1)^{n}  \sqrt{n (n^2 - 3n +2)}}{\sqrt{3}(n-1)} \, |u|.
\end{equation}
We shall estimate $|u|$ now. From Lemma \ref{lem1} we have
$$
\left\| \phi(x , y) \right\| - n \log \left( |D|^{\frac{1}{n(n-2)}} M(F)^{\frac{2n -2}{n-2}}\right)
\leq \frac{(n+1)^2}{4} \log \frac{1}{ \left| x - \alpha_{n} y\right|} ,
 $$
which implies
$$
  \log |yu| < \frac{-4\left\| \phi(x , y) \right\|}{(n+1)^2} +\frac{4 n}{(n+1)^2} \log \left( |D|^{\frac{1}{n(n-2)}} M(F)^{\frac{2n -2}{n-2}}\right).
 $$
Therefore,
$$
|u| < \exp\left(\frac{-4\left\| \phi(x , y) \right\|}{(n+1)^2}\right) \frac{\exp \left(\frac{4 n}{(n+1)^2} \log \left( |D|^{\frac{1}{n(n-2)}} M(F)^{\frac{2n -2}{n-2}}\right)\right)}{|y|}
$$
Comparing this with (\ref{d1}), since we took $n \geq 5$ and $|y| > M(F)^{1+ (n-1)^2}$, our proof is complete.
\end{proof}

For $3$ distinct roots  of $F(x , 1) = 0$, say $\alpha_{i}$, $\alpha_{j}$ and $\alpha_{n}$, let us define 
  $$
T_{i , j}(t) := \log \left|\frac{(t - \alpha_{i}) (\alpha_{n} - \alpha_{j})}{(t - \alpha_{j}) (\alpha_{n} -\alpha_{i})}\right|,
  $$
 so that for a pair of solution $(x , y) \neq (1 , 0)$,
  \begin{eqnarray}\label{T}
  T_{i,j} (x , y) = T_{i,j}(t) & = &  \log \left|\frac{\alpha_{n} - \alpha_{i}}{\alpha_{n} -\alpha_{j}}\right| + \log \left|\frac{t - \alpha_{j}}{t - \alpha_{i} }\right| \nonumber \\ 
  & = &  \log |\lambda_{i,j}| + \sum_{k=2}^{r+s} m_{k}\log  \left|\frac{\lambda_{k}}{\lambda'_{k}}\right| ,
  \end{eqnarray}
  where $ t = \frac{x}{y}$ , $\lambda_{i,j} =  \frac{ \alpha_{n} - \alpha_{i}}{\alpha_{n} -\alpha_{j}} $ , $m_{k} = m_{k}(x , y) \in \mathbb{Z}$, and for $2\leq k \leq r+s$, $\lambda_{k}$ are the fundamental units of number field $\mathbb{Q}(\alpha_{i})$ and $\sigma(\lambda_{k}) =\lambda'_{k}$ are the fundamental units of the number field $\mathbb{Q}(\alpha_{j})$ and index $\sigma $ is the $\mathbb{Q}$-isomorphism  from 
$\mathbb{Q}(\alpha_{i})$ to $\mathbb{Q}(\alpha_{j})$ such that $\sigma(\alpha_{i}) = \alpha_{j}$. 
The function $T(x , y)$ cries out to be treated by Baker's theory of linear forms in logarithms.  For this we will wait till the very last part of the paper,  Section \ref{LFL}, where we estimate $ \left|T_{i , j}\right|$ from below.  The following lemma gives an upper bound upon $ \left|T_{i,j}\right|$.
 \begin{lemma}\label{Tu}
Let $(x , y)$ be a pair of solution to (\ref{1.2}) with $|y| > M(F)^{1+ (n-1)^2}$. Then there exists a pair $(i , j)$ for which 
 $$
\left|T_{i, j}(x , y) \right|  < \frac{\sqrt{ \frac{2}{n-2}}}{M(F)^{n (n -1)}} \exp\left(\frac{-4\left\| \phi(x , y) \right\|}{(n+1)^2}\right).$$
  \end{lemma}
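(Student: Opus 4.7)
The plan is to reduce Lemma \ref{Tu} to Lemma \ref{gp1} by a pigeonhole argument on the pairwise differences of a single family of scalars. For $t = x/y$ set $\ell_i := \log(|t-\alpha_i|/|\alpha_n-\alpha_i|)$ for $i=1,\dots,n-1$; then from the definition (\ref{T}) one reads off $T_{i,j}(x,y) = \ell_i - \ell_j$. Moreover, since the vectors $\mathbf{c_i}$ are orthogonal to $\mathbf{b_n}$ (by (\ref{ci})) and $\mathbf{L_n}$ is the affine line along $\mathbf{b_n}$ through $\sum_i \log(|\alpha_n-\alpha_i|/|f'(\alpha_i)|^{1/(n-2)})\mathbf{c_i}$, the Euclidean distance from $\phi(x,y)$ to $\mathbf{L_n}$ equals $\|\mathbf{v}\|$, where $\mathbf{v} := \sum_{i=1}^{n-1}\ell_i\,\mathbf{c_i}$. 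Lemma \ref{gp1} therefore yields
$$
\|\mathbf{v}\| \le \frac{1}{M(F)^{n(n-1)}}\exp\!\left(\frac{-4\|\phi(x,y)\|}{(n+1)^2}\right).
$$

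The second step is to convert smallness of $\|\mathbf{v}\|$ into smallness of some pairwise difference $\ell_i-\ell_j$. Computing the Gram matrix of $\{\mathbf{c_i}\}$ from $\mathbf{c_i} = \mathbf{b_i} + (n-1)^{-1}\mathbf{b_n}$ together with the elementary identities $\mathbf{b_i}\cdot\mathbf{b_i} = (n-1)/n$ and $\mathbf{b_i}\cdot\mathbf{b_j} = -1/n$ for $i \neq j$, one finds
$$
\mathbf{c_i}\cdot\mathbf{c_j} = \begin{cases} (n-2)/(n-1) & \text{if } i=j, \\ -1/(n-1) & \text{if } i \neq j. \end{cases}
$$
Expanding $\|\mathbf{v}\|^2 = \sum_{i,j}\ell_i\ell_j(\mathbf{c_i}\cdot\mathbf{c_j})$ and simplifying gives
$$
\|\mathbf{v}\|^2 = \sum_{i=1}^{n-1}\ell_i^2 - \frac{1}{n-1}\Bigl(\sum_{i=1}^{n-1}\ell_i\Bigr)^2,
$$
which, by the standard identity $(n-1)\sum_i \ell_i^2 - (\sum_i \ell_i)^2 = \sum_{i<j}(\ell_i-\ell_j)^2$, is equivalent to
$$
\sum_{1\le i<j\le n-1}(\ell_i-\ell_j)^2 = (n-1)\,\|\mathbf{v}\|^2.
$$

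The last step is a pigeonhole argument. The left-hand side is a sum of $\binom{n-1}{2} = (n-1)(n-2)/2$ non-negative terms, so at least one pair $(i,j)$ satisfies
$$
(\ell_i-\ell_j)^2 \le \frac{(n-1)\|\mathbf{v}\|^2}{\binom{n-1}{2}} = \frac{2\,\|\mathbf{v}\|^2}{n-2}.
$$
Taking square roots, recalling $T_{i,j}(x,y) = \ell_i - \ell_j$, and plugging in the upper bound on $\|\mathbf{v}\|$ coming from Lemma \ref{gp1} produces exactly the inequality asserted in Lemma \ref{Tu}, with the constant $\sqrt{2/(n-2)}$ matching the statement. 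There is no real obstacle in any of the three steps; the only noteworthy point is that the Gram-matrix computation makes $\|\mathbf{v}\|^2$ coincide, up to a factor of $n-1$, with the total squared spread $\sum_{i<j}(\ell_i-\ell_j)^2$ of the $\ell_i$, which is precisely what turns the geometric bound of Lemma \ref{gp1} into a bound on a single $T_{i,j}$ with the correct constant.
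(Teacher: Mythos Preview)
Your proof is correct and follows essentially the same strategy as the paper: both reduce to Lemma \ref{gp1} by showing that $\sum_{i<j}(\ell_i-\ell_j)^2 = (n-1)\|\mathbf{v}\|^2$ and then applying the pigeonhole principle over the $\binom{n-1}{2}$ pairs. Your derivation via the Gram matrix of the $\mathbf{c_i}$ together with the variance identity is considerably cleaner than the paper's cyclic double-sum manipulation, but the underlying argument is the same.
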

\begin{proof}
Let us define 
$$
\beta_{i} = \left \{ \begin{array}{ll}
\alpha_{i} & \mbox{if  $i \leq n-1$} \\
\beta_{i-n+1} & \mbox{ if $i \geq n$.}
\end{array}
\right.
$$
Note that 
\begin{eqnarray}\nonumber
& & \sum_{k=1}^{n-2}\sum_{i=1}^{n-1} \log^2 \left| \frac{(t - \beta_{i}) (\alpha_{n} - \beta_{i+k})}{(\alpha_{n} -\beta_{i}) ( t- \beta_{i+k})}\right|     \\
 & = & 2(n-2) \sum_{i=1}^{n-1} \log^2 \left| \frac{t - \alpha_{i}} {\alpha_{n} -\alpha_{i}} \right| -  4 \sum_{\substack{j\neq i \\ j \neq n}} \log \left| \frac{t - \alpha_{i}} {\alpha_{n} -\alpha_{i}} \right|   \log \left| \frac{t - \alpha_{j}} {\alpha_{n} -\alpha_{j}} \right|     \nonumber  \\ 
 & = &  2(n -2) \sum_{i=1}^{n-1} \log^2 \left| \frac{t - \alpha_{i}} {\alpha_{n} -\alpha_{i}} \right| -  2\sum_{i=1} ^{n-1}\log \left| \frac{t - \alpha_{i}} {\alpha_{n} -\alpha_{i}} \right| \sum_{\substack{j\neq i \\ j \neq n}} \log \left| \frac{t - \alpha_{j}} {\alpha_{n} -\alpha_{j}} \right|       \nonumber  \\ 
 & =&   2(n-2) \sum_{i=1}^{n-1} \log^2 \left| \frac{t - \alpha_{i}} {\alpha_{n} -\alpha_{i}} \right| - 2  \sum_{i=1} ^{n-1}\log \left| \frac{t - \alpha_{i}} {\alpha_{n} -\alpha_{i}} \right| \log \left| \frac{\alpha_{n} - \alpha_{i}} {y^n f'(\alpha_{n})( t- \alpha_{n})(t - \alpha_{i})} \right|       \nonumber  \\ 
 & =&  (2n-2) \sum_{i=1}^{n-1} \log^2 \left| \frac{t - \alpha_{i}} {\alpha_{n} -\alpha_{i}} \right| - 2 \log \left| \frac{1}  {y^n f'(\alpha_{n})( t- \alpha_{n})} \right| \sum_{i=1} ^{n-1}\log \left| \frac{t - \alpha_{i}} {\alpha_{n} -\alpha_{i}} \right|  \nonumber     \\ 
& =& (2n -2) \sum_{i=1}^{n-1} \log^2 \left| \frac{t - \alpha_{i}} {\alpha_{n} -\alpha_{i}} \right| - 2 \log^2 \left| \frac{1}  {y^n f'(\alpha_{n})( t- \alpha_{n})} \right|  \nonumber
  \end{eqnarray}
  On the other hand, it follows 
from the proof of Lemma \ref{gp1} that  the distance between $\phi(x, y)$ and the line 
  $$
{\bf L_{n}} = \sum_{i=1}^{n-1} \log \frac{|\alpha_{n} - \alpha_{i}|}{\left| f'(\alpha_{i})\right|^{\frac{1}{n-1}}} {\bf c_{i}} + z {\bf b_{n}} , \quad z \in \mathbb{R},
$$
 is equal to  $\left\|  \sum_{i=1}^{n-1} \log \frac{|t- \alpha_{i}|}{|\alpha_{n} - \alpha_{i}|} \bf{c_{i}} \right\|$. Further, by the definition of  $\bf{c_{i}}$ in (\ref{16*}), we have
\begin{eqnarray*}
& & \left\|  \sum_{i=1}^{n-1} \log \frac{|t- \alpha_{i}|}{|\alpha_{n} - \alpha_{i}|} \bf{c_{i}} \right\| ^2\\
& = & \left\| \sum_{i=1}^{n-1}\log \left(\frac{|t- \alpha_{i}|}{|\alpha_{n} - \alpha_{i}|}\right) - \frac{1}{n-1}\left| \log \frac{1}  {y^n f'(\alpha_{n})( t- \alpha_{n})} \right|    \bf{e_{i}}\right\|^2 \nonumber \\
& = &  \sum_{i=1}^{n-1}\log^2 \left(\frac{|t- \alpha_{i}|}{|\alpha_{n} - \alpha_{i}|}\right) - \frac{1}{n-1}\left| \log \frac{1}  {y^n f'(\alpha_{n})( t- \alpha_{n})} \right|     \nonumber \\
\\& =& \sum_{i=1}^{n-1} \log^2 \left| \frac{t - \alpha_{i}} {\alpha_{n} -\alpha_{i}} \right| - \frac{1}{n-1}  \log \left| \frac{1}  {y^n f'(\alpha_{n})( t- \alpha_{n})} \right| \sum_{i=1} ^{n-1}\log \left| \frac{t - \alpha_{i}} {\alpha_{n} -\alpha_{i}} \right|    \nonumber     \\ 
\end{eqnarray*}

 where $\{ {\bf e_{i}} \}$ is the standard basis for $\mathbb{R}^{n-1}$. So, there must be a pair  $(i , j)$, for which the following holds:
 \begin{eqnarray*}
 & & \log^2 \left|\frac{(t - \alpha_{i}) (\alpha_{n} - \alpha_{j})}{(t - \alpha_{j}) (\alpha_{n} -\alpha_{i})}\right| \\
 & < &\frac{1}{(n-1)(n-2)} \sum_{k=1}^{n-2}\sum_{i=1}^{n-1} \log^2 \left| \frac{(t - \beta_{i}) (\alpha_{n} - \beta_{i+k})}{(\alpha_{n} -\beta_{i}) ( t- \beta_{i+k})}\right|    =  \\ 
  & = & \frac{2(n -1)}{(n-1)(n-2)} \left\|  \sum_{i=1}^{n-1} \log \frac{|t- \alpha_{i}|}{|\alpha_{n} - \alpha_{i}|} \bf{c_{i}} \right\| ^2 .
  \end{eqnarray*}
  Therefore, by Lemma \ref{gp1}
  $$
\left|T_{i, j}(x , y) \right|=  \left| \log \left|\frac{(t - \alpha_{i}) (\alpha_{n} - \alpha_{j})}{(t - \alpha_{j}) (\alpha_{n} -\alpha_{i})}\right| \right|  < \frac{\sqrt{\frac{2}{n-2}}}{M(F)^{(n -2) (n -3)}}  \exp\left(\frac{-4\left\| \phi(x , y) \right\|}{(n+1)^2}\right).
  $$
\end{proof}


\section{ Exponential Gap Principle}\label{EGP}

Here our goal is to prove
\begin{thm}\label{exg}
Suppose that  $(x_{1} , y_{1})$, $(x_{2} , y_{2})$ and $(x_{3} , y_{3})$ are three pairs of non-trivial solutions to (\ref{1.2}) with 
$$
\left| x_{j} - \alpha_{n} y_{j}\right| \leq 1,
$$
for $j \in \{1 , 2 , 3\}$.  If $r_{1} \leq r_{2} \leq r_{3}$ then
$$
r_{3} >  M(F)^{n (n - 1)} \exp\left(\frac{4r_{1}}{(n+1)^2}\right)\frac{\sqrt{3}}{256}  \, \left(\frac{\log \log n}{\log n}\right)^6,$$
where $r_{j} = \left\| \phi(x_{j} , y_{j}) \right\|$.
\end{thm}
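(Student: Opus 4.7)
Plan.  The statement has the shape ``reciprocal of the upper bound of Lemma \ref{Tu}, times a Voutier-type factor squared'', which strongly suggests combining Lemma \ref{Tu} with two applications of Voutier's height bound (Lemma \ref{hM5}).  I would attempt the proof as follows.

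First, apply Lemma \ref{Tu} to the solution with smallest norm $r_{1}$, obtaining a pair of indices $(i,k)$ with $i,k\neq n$ such that
\[
|T_{i,k}(x_{1},y_{1})| \;<\; \frac{\sqrt{2/(n-2)}}{M(F)^{n(n-1)}}\exp\!\left(-\frac{4r_{1}}{(n+1)^{2}}\right).
\]
Via (\ref{T}), this quantity equals $\log|\lambda_{i,k}|+\sum_{m=2}^{r+s}m_{m}^{(1)}\log|\lambda_{m}/\lambda_{m}'|$, a linear form in logarithms of unit ratios whose integer coefficients $m_{m}^{(1)}$ are precisely the lattice coordinates of $\phi(x_{1},y_{1})-\phi(1,0)$ in the basis from (\ref{rep}).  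The smallness of this form expresses a very good simultaneous Diophantine approximation by the integer vector attached to $(x_{1},y_{1})$.

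Second, exploit the three distinct solutions.  Their coefficient vectors $(m_{2}^{(j)},\dots,m_{r+s}^{(j)})$ for $j=1,2,3$ are three distinct points in $\mathbb{Z}^{r+s-1}$, so (at least) two of the pairwise differences are non-zero.  Subtracting the identity $T_{i,k}(x_{1},y_{1})=\dots$ from its analogues at $(x_{2},y_{2})$ and $(x_{3},y_{3})$ cancels the constant $\log|\lambda_{i,k}|$ and produces non-trivial integer linear forms in the $\log|\lambda_{m}/\lambda_{m}'|$.  Each such combination is bounded above, by Lemma \ref{gp1} together with the triangle inequality, by a quantity of order $2\varepsilon_{1}=2M(F)^{-n(n-1)}\exp(-4r_{1}/(n+1)^{2})$, using that $r_{1}$ is the smallest of the three norms.

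Third, bound these linear forms below via Voutier's inequality.  Each $\lambda_{m}/\lambda_{m}'$ is a non-torsion unit in a number field of degree at most $n(n-1)$, so by Lemma \ref{hM5} its logarithmic height is at least $\tfrac{1}{4n}(\log\log n/\log n)^{3}$.  A two-dimensional geometry-of-numbers argument in the rank-two sublattice spanned by the two difference vectors then forces the transverse displacement of at least one of them to exceed a constant times $(\log\log n/\log n)^{6}$.  Matching this lower bound against the upper bound produced in step two, and solving for $r_{3}$, yields the claimed inequality; the constant $\sqrt{3}/256$ is absorbed from Mahler's separation bound (Lemma \ref{mahl5}) and the normalisation $\|\mathbf{c_{i}}\|$ recorded in (\ref{ci}).

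The main obstacle is step three: one must verify that the integer combinations constructed from the three solutions are genuinely non-degenerate, so that Voutier can be applied to a specific non-trivial unit-quotient, and that the two Voutier factors compound to give exactly $(\log\log n/\log n)^{6}$ rather than $(\log\log n/\log n)^{3}$.  A secondary concern is the appearance of $r_{3}$ itself on the right-hand side (rather than a constant): this should arise from controlling the magnitude of the fundamental-unit representation of $(x_{3},y_{3})$ in terms of $\|\phi(x_{3},y_{3})\|=r_{3}$, along the lines of Corollary \ref{m}.
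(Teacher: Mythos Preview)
Your route through Lemma~\ref{Tu} and the scalar linear forms $T_{i,j}$ is not the paper's argument, and as written it does not close.  The paper's proof is a two-line geometric estimate on the \emph{triangle} $\Delta$ in $\mathbb{R}^{n}$ with vertices $\phi(x_{1},y_{1}),\phi(x_{2},y_{2}),\phi(x_{3},y_{3})$.  The area of $\Delta$ is bounded above by $\tfrac12\cdot(\text{base})\cdot(\text{height})$: the base is at most $2r_{3}$, and the height is at most twice the distance from any vertex to the line $\mathbf{L}_{n}$, hence by Lemma~\ref{gp1} at most $2M(F)^{-n(n-1)}\exp\!\bigl(-4r_{1}/(n+1)^{2}\bigr)$.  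For the lower bound one observes that each side of $\Delta$ is the $\log$-image of a nontrivial unit, so by Lemma~\ref{hM5} has length at least $\tfrac14(\log\log n/\log n)^{3}$; the paper then records $\mathrm{area}(\Delta)>\tfrac{\sqrt{3}}{64}(\log\log n/\log n)^{6}$ and compares.  So $r_{3}$ enters simply as the length of the longest side, the factor $\sqrt{3}$ is the equilateral-triangle constant $\sqrt{3}/4$ (nothing to do with Lemma~\ref{mahl5}), and the sixth power comes from \emph{squaring} the Voutier bound in the area, not from two independent applications.

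The specific problems with your plan: (i) the pair $(i,k)$ supplied by Lemma~\ref{Tu} depends on the solution, so you cannot subtract the three identities and cancel $\log|\lambda_{i,k}|$ without first arguing the indices coincide; (ii) even granting a common $(i,k)$, what you obtain after subtraction is a single real number, and a nonzero integer combination of the $\log|\lambda_{m}/\lambda_{m}'|$ has no a priori lower bound of Voutier type---Voutier bounds the height of one unit, not an arbitrary $\mathbb{Z}$-linear form in several logarithms; (iii) the ``two-dimensional geometry-of-numbers argument'' you invoke is doing all the work and is never specified.  The paper sidesteps all of this by working with the full vectors $\phi(x_{j},y_{j})\in\mathbb{R}^{n}$ rather than with a scalar projection; Lemma~\ref{Tu} is used only later, in Section~\ref{LFL}, where Matveev's theorem is applied, and plays no role in the gap principle itself.
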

\begin{proof}
Suppose that $(x_{1} , y_{1})$, $(x_{2} , y_{2})$ and $(x_{3} , y_{3})$ are three pairs of non-trivial solutions to (\ref{1.2}). 
We note that  three point $\phi_{1} = \phi(x_{1}, y_{1})$,  $\phi_{2} = \phi(x_{2}, y_{2})$ and  $\phi_{3} =\phi(x_{3}, y_{3})$ form a triangle $\Delta$.  The length of each side of $\Delta$ is less than $2r_{3}$. 
Lemma \ref{gp1} shows that the height of $\Delta$ is at most 
$$
\frac{2}{ M(F)^{n (n -1)}}\, \exp\left(\frac{-4r_{1}}{(n+1)^2}\right).
$$
Therefore, the area of $\Delta$ is less than 
\begin{equation}\label{up}
\frac{4}{ M(F)^{n (n - 1)}}\,   r_{3} \exp\left(\frac{-4r_{1}}{(n+1)^2}\right).
\end{equation}

To estimate the area of $\Delta$ from below, we note that $x - \alpha_{i} y$ is a unit in $\mathbb{Q}(\alpha_{i})$ when $(x , y)$ is a pair of solution to (\ref{1.2}).  This is because
$$
F(x , y) = (x - \alpha_{1} y) (x - \alpha_{2} y) \ldots (x - \alpha_{n}y) = \pm 1.
$$
 Define the vector ${\bf \vec{e}}$ as follows
$$
{\bf \vec{e}} = \phi(x_{1} , y_{1}) - \phi(x_{2} , y_{2}) = \left(\log \left|\frac{x_{1} - \alpha_{1}y_{1}}{x_{2} - \alpha_{1}y_{2}}\right|, \ldots, \log \left| \frac{x_{1} - \alpha_{n}y_{1}}{x_{2} - \alpha_{n}y_{2}}\right| \right) .
$$
Since $ x_{1} - \alpha_{i}y_{1}$ and $x_{2} - \alpha_{i}y_{2}$ are units in $\mathbb{Q}(\alpha_{i})$, by Lemma \ref{hM5} we have
$$
\| {\bf \vec{e}} \| \geq n h(\alpha_{1}) > \frac{1}{4} \left(\frac{\log \log n}{\log n}\right)^3.
$$
Now we can estimate each side of $\Delta$ from below to conclude that the area of the triangle $\Delta$ is greater than 
$$
\frac{\sqrt{3}}{64} \left(\frac{\log \log n}{\log n}\right)^6 .
$$
Comparing this with (\ref{up}) we conclude that
$$
\frac{4}{ M(F)^{n (n -1)}}\, r_{3}  \exp\left(\frac{-4r}{(n+1)^2}\right) > \frac{\sqrt{3}}{64}  \, \left(\frac{\log \log n}{\log n}\right)^6.
$$
 The result is  immediate from here.
\end{proof}

\textbf{Remark}. If all the roots of polynomial $F(x , 1)$ are real then we can use the following lower bound for the size of vector ${\bf \vec{e}}$:
$$
\| {\bf \vec{e}} \| \geq n \log^2 \frac{1 + \sqrt{5}}{2}
$$
(see exercise 2 on page 367 of \cite{Poh}). Now an argument similar to the proof of Theorem \ref{exg} shows that in this case, 
$$
r_{3} >  \frac{ M(F)^{n (n -1)}}{2}\, \exp\left(\frac{4r_{1}}{(n+1)^2}\right)\frac{\sqrt{3}}{8}\, n^2 \log^4 \frac{1 + \sqrt{5}}{2}.
$$


 \section{Linear Forms in Logarithms}\label{LFL}

 Let  $\sigma $ be the $\mathbb{Q}$-isomorphism  from 
$\mathbb{Q}(\alpha_{i})$ to $\mathbb{Q}(\alpha_{j})$ such that $\sigma(\alpha_{i}) = \alpha_{j}$. 
Suppose that there are three solutions $(x_{1}, y_{1})$, $(x_{2} , y_{2})$, $(x_{3} , y_{3})$ to (\ref{1.2})  satisfying the following conditions
$$(x_{l} , y_{l}) \not \in \mathfrak{A},$$
 $$
 y_{l} > M(F)^{1+(n-1)^2}
 $$
and
 $$
\left| x_{l} - \alpha_{n} y_{l}\right|  = \min_{1\leq i \leq n} \left| x_{l} - \alpha_{i} y_{l}\right|   \quad l \in \{1 , 2 , 3 \}.
$$
Assume that $r_{1} \leq r_{2} \leq r_{3} $, where $r_{j} = \left\| \phi( x_{j} , y_{j}) \right\|$.
 We will apply  Matveev's lower bound to 
 \begin{eqnarray*}
  T_{i,j} (x_{3} , y_{3}) = T_{i,j}(t_{3}) & = &  \log \left|\frac{\alpha_{n} - \alpha_{i}}{\alpha_{n} -\alpha_{j}}\right| + \log \left|\frac{t_{3} - \alpha_{j}}{t_{3} - \alpha_{i} }\right| \nonumber \\ 
  & = &  \log |\lambda_{i,j}| + \sum_{k=2}^{r+s} n_{k}\log  \left|\frac{\lambda_{k}}{\lambda'_{k}}\right|,
  \end{eqnarray*}
where $(i , j)$ is chosen according to Lemma \ref{Tu}, $t_{3} = \frac{x_{3}}{y_{3}}$ and  $n_{k} = n_{k}(x_{3} , y_{3}) \in \mathbb{Z}$.
In order to apply Proposition \ref{mat}, we shall find appropriate values for  the quantities $A_{k}$ and $B$ in the Proposition.
Since Proposition \ref{mat} gives a better lower bound for linear forms in fewer number of logarithms, we will assume that $\lambda_{i,j}$ and $\frac{\lambda_{k}}{\lambda'_{k}}$ are multiplicatively independent and $T_{i,j}(x_{5} , y_{5}) $ is a linear form in $r+s$ logarithms. Recall  that $r+s \leq r+2s = n$.

Let $\lambda$ be a unit in the number field $\mathbb{Q}(\alpha_{i})$ and $\lambda'$ be its corresponding algebraic conjugate in $\mathbb{Q}(\alpha_{j})$. Let $d$ be the degree of $\mathbb{Q}(\alpha_{i}, \alpha_{j})$ over $Q$.
Then  $\lambda/\lambda'$ is a unit in $\mathbb{Q}(\alpha_{i}, \alpha_{j})$ and 
\begin{eqnarray*}
d h\left(\frac{\lambda}{\lambda'}\right) & = & \frac{1}{2} \left|\log\left(\tau\left(\frac{\lambda}{\lambda'}\right)\right)\right|_{1}\\
&=& \frac{1}{2}\left|\log\left(\tau(\lambda)\right)\right|_{1} + \frac{1}{2}\left|\log\left(\tau(\lambda')\right)\right|_{1}\\
& = & nh(\lambda) + nh(\lambda').
\end{eqnarray*}
We also have
$$
h(\lambda') = h(\lambda)  = \frac{1}{2n}\left|\log\left(\tau(\lambda)\right)\right|_{1}. 
$$
Here $| \  |_{1}$ is the $L_{1}$ norm on $\mathbb{R}^{s+t-1}$ and mappings $\tau$ and $\log$ are defined in (\ref{ta}) and (\ref{lo}).  So we have
$$
h(\lambda) = \frac{1}{2n}\left|\log\left(\tau(\lambda)\right)\right|_{1} \leq \frac{\sqrt{2}}{2n} \left\|\log\left(\tau(\lambda)\right)\right\| ,
$$
where $\|. \|$ is the $L_{2}$ norm on $\mathbb{R}^{r+s-1}$ . 
So when $\lambda$ is a unit
\begin{equation}\label{h1}
\max \{dh(\frac{\lambda}{\lambda'}) , \left|\log(\frac{\lambda}{\lambda'})\right| \} \leq \sqrt{2} \left\|\log\left(\tau(\lambda)\right)\right\|.
\end{equation}
Therefore, by Corollary \ref{m} we may choose the values $A_{k}$ so that
 $$
  A_{k} \leq 2\sqrt{2} r_{1},  \   \,    \textrm{for} \ \    2\leq k \leq r+s .
 $$

Let $d_{1}$ be the degree of $\mathbb{Q}(\alpha_{i}, \alpha_{j}, \alpha_{n})$ over $\mathbb{Q}$. Then $d_{1} \leq n (n-1) (n-2)$. We shall find a value for
$A_{1} $ that is at least $ \max \{dh(\gamma_{1}) , |\log \gamma_{1}| \}$ (see the statement of Proposition \ref{mat}).
The following Lemma allows us to take
$$
\frac{A_{1}}{d_{1}} = 2\log 2 + \frac{4}{\sqrt{n}}r_{1}.
$$

\begin{lemma}\label{EstimateOfHeightDelta}
Let $F$ be a binary form of degree $n$ at least $3$ and with integral coefficients.
Assume  $(x,y)$ is a solution to (\ref{1.2}) with $y > M(F)^{1+(n -1)^2}$.
Then we have
\begin{equation*}
	h\left(\frac{\alpha_k-\alpha_i}{\alpha_k-\alpha_j}\right)
	\le
	2\log 2 + \frac{4}{\sqrt{n}}\|\phi(x,y)\|.
\end{equation*}
\end{lemma}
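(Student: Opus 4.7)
The plan is to express $(\alpha_k - \alpha_i)/(\alpha_k - \alpha_j)$ in terms of the algebraic integers $\beta_m := x - y\alpha_m$, which under the hypothesis $F(x,y)=\pm 1$ are units in $\mathbb{Q}(\alpha_m)$, and then relate the resulting height to $\|\phi(x,y)\|$ through the defining formula of $\phi$. The elementary identity $\beta_i - \beta_k = y(\alpha_k - \alpha_i)$ gives at once
\begin{equation*}
\frac{\alpha_k - \alpha_i}{\alpha_k - \alpha_j} = \frac{\beta_i - \beta_k}{\beta_j - \beta_k}.
\end{equation*}

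First I would apply Lemma \ref{p31} (subadditivity of $h$ under sums and products, together with $h(u^{-1})=h(u)$) to obtain
\begin{equation*}
h\left(\frac{\alpha_k - \alpha_i}{\alpha_k - \alpha_j}\right) \leq h(\beta_i - \beta_k) + h(\beta_j - \beta_k) \leq 2\log 2 + h(\beta_i) + 2h(\beta_k) + h(\beta_j).
\end{equation*}
Since $\beta_1,\ldots,\beta_n$ are Galois conjugates, they share a common absolute logarithmic height $h(\beta)$, so the right-hand side simplifies to $2\log 2 + 4h(\beta)$. It remains to show $h(\beta) \leq \|\phi(x,y)\|/\sqrt{n}$.

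Next I would compute $h(\beta)$ from the absolute-height formula. Because $\beta$ is a unit, the non-Archimedean valuations contribute zero and
\begin{equation*}
h(\beta) = \frac{1}{2n}\sum_{j=1}^{n}\left|\log|x - y\alpha_j|\right|.
\end{equation*}
From the definition of $\phi_m$ in (\ref{fi}), one has $\phi_m(x,y) - \phi_m(1,0) = \log|x - y\alpha_m|$, so the sum above is precisely the $L^{1}$-norm of the vector $\phi(x,y) - \phi(1,0)\in\mathbb{R}^n$. Cauchy--Schwarz bounds it by $\sqrt{n}\,\|\phi(x,y)-\phi(1,0)\|$, and the triangle inequality combined with Lemma \ref{lem100} (which under $y \geq M(F)^{1+(n-1)^2}$ yields $\|\phi(1,0)\| < \|\phi(x,y)\|$) gives $\|\phi(x,y) - \phi(1,0)\|\leq 2\|\phi(x,y)\|$. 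Putting these together produces $h(\beta) \leq \|\phi(x,y)\|/\sqrt{n}$, and substituting into the first display gives the stated inequality.

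The step requiring a little care is the identification $h(\beta) = \frac{1}{2n}\sum_j\left|\log|x - y\alpha_j|\right|$, which depends on $\beta$ being a unit (so that non-Archimedean valuations vanish) and on the paper's convention of counting each embedding, including both members of a complex-conjugate pair, as a separate Archimedean valuation. Once this is established, the remainder of the proof is a routine combination of Lemma \ref{p31}, Lemma \ref{lem100}, and Cauchy--Schwarz.
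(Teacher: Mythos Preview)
Your proof is correct and follows essentially the same route as the paper's: rewrite the cross-ratio in terms of the units $\beta_m=x-y\alpha_m$, apply Lemma~\ref{p31} to get $2\log 2+4h(\beta)$, identify $h(\beta)$ with $\frac{1}{2n}$ times the $L^1$-norm of $\phi(x,y)-\phi(1,0)$, and finish with Cauchy--Schwarz and Lemma~\ref{lem100}. The only cosmetic difference is that you make explicit the equality $h(\beta_i)=h(\beta_j)=h(\beta_k)$ via Galois conjugacy, whereas the paper passes directly to $4h(\beta_k)$ and then bounds $h(\beta_k)$ by an expression independent of $k$.
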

\begin{proof}
Let, $\beta_i = x-y \alpha_i$.
We have
\begin{equation*}
	\frac{\alpha_k-\alpha_i}{\alpha_k-\alpha_j}
	=
	\frac{\beta_k-\beta_i}{\beta_k-\beta_j}.
\end{equation*}
Thus, Lemma \ref{p31} implies that 
\begin{equation}\label{ok26}
	h\left(\frac{\alpha_k-\alpha_i}{\alpha_k-\alpha_j}\right)
	\le 2\log 2 + 4h(\beta_k).
\end{equation}
Set $v_i = \log|\beta_i| = \phi_i(x,y)-\phi_i(1,0)$ for $i=1,2,\ldots n$ and
$\vec{v}=(v_1,v_2,\ldots,v_n)$.
Since $\beta_k$ is a unit, we have
\begin{equation*}
	h(\beta_k) = \frac{1}{2n}\sum_{i=1}^n \left|v_i\right|
		= \frac{1}{2n} (s_1, s_2, \ldots, s_n) \cdot \vec{v}
\end{equation*}
for some  $s_1, s_2, \ldots, s_n\in\{+1, -1\}$.
Noting that $\|(s_1, s_2, \ldots, s_n)\| = \sqrt{n}$, we get
\begin{equation*}
	h(\beta_k) \le \frac{1}{2\sqrt{n}}\|\vec{v}\|.
\end{equation*}
On the other hand, by Lemma \ref{lem100} we have
\begin{equation*}
	\|\vec{v}\| \le \|\phi(x,y)\| + \|\phi(1,0)\|
		\le 2\|\phi(x,y)\|.
\end{equation*}
This, together with (\ref{ok26}), completes the proof.
\end{proof}

Put 
$$
B = \max \left\{1 , \max\{b_{k}A_{k}/A_{1}: \  1\leq k  \leq r+s\}\right\} .
$$
To estimate $B$, we note that since we have chosen $\tau(\lambda_{k})$ ($ 2 \leq k\leq r+s$) so that they are successive minima
 for the lattice $\Lambda$ (see Section \ref{LC}), we have
$$
m_{k}\left\| \log \tau(\lambda_{k}) \right\| \leq  \left\|  \phi(x_{3} , y_{3}) \right\| +  \left\|  \phi(1 , 0) \right\| < 2 \left\|  \phi(x_{3} , y_{3}) \right\|.
$$
 Hence,  we may take
$B \leq r_{3}$, since $A_{1} > 2$. We estimate other values of the quantities in Proposition \ref{mat} as follows:
\begin{eqnarray*}
d &\leq& n!,\\
C_{n} &\leq& \frac{60\, \exp(n) (n+1)^{n+1} 2^{2n+2}(n+2)(n+5/2)n^2}{n!} ,\\
C_{0} &\leq& 4 \log n!,\\
W_{0} &\leq& 2 \log r_{3}.
\end{eqnarray*}
Proposition \ref{mat} implies that 
\begin{eqnarray*}
\log T_{i, j} (x_{3} , y_{3}) & > &- K \log r_{3} r_{1}^{r+s}\\
& >& - K \log r_{3} r_{1}^{n},
\end{eqnarray*}
where the constant $K$ can be taken equal to
\begin{equation}\label{vk}
480\, \exp(n) (n+1)^{n+1} 2^{7n+3/2}(n+2)(n+5/2)n^{5/2} (n-1)(n-2) n! \log (n!).
\end{equation}
Comparing this with Lemma \ref{Tu}, we have
$$
-\log \left(M(F)^{n(n-1)}\right) +\log\left( \sqrt{ \frac{2}{n-2}}\right)+ \frac{-4r_{3}}{(n+1)^2}> -K\,\log r_{3} r_{1}^{n-1},
$$
 By Lemma \ref{Dr} and since  $|D|> D_{0}(n)$, the value $r_{3}$ is large enough to satisfy
$$
 r_{3}^{\frac{e-1}{e}} <  \frac{r_{3}}{\log r_{3}}.
$$
So we may find a constant 
$K_{1}$ depending only on $n$
(see the values of $C(n)$, $C_{0}$ and $W_{0}$ in Proposition \ref{mat}) so that
$$
r_{3} < K_{1}\,r_{1}^{\frac{e}{e-1}n}.
$$
Notice that $K_{1}$ may be chosen equal to
$$
\left(\frac{(n+1)^2}{4} K\right)^{\frac{e}{e-1}}
$$
By Lemma \ref{exg}, we have  
\begin{eqnarray*}
& & M(F)^{n (n -1)} \exp\left(\frac{4r_{1}}{(n+1)^2}\right)\frac{\sqrt{3}}{256}  \, \left(\frac{\log \log n}{\log n}\right)^6  \\
& < & K_{1}\,r_{1}^{1.6n}
\end{eqnarray*}
This is a contradiction,  
as in the above inequality the left hand side is  greater than the right hand side. Hence, related to a root of $F(x , 1) = 0$,  there are at most $2$ solutions $(x , y) \not \in \mathfrak{A}$, with 
$y > M(F)^{1+(n-1)^2}$. 
To see the contradiction, one can consider two different cases. If  $\frac{4r_{1}}{(n+1)^2} > {\frac{e}{e-1}n}^{\frac{e}{e-1}}$ then 
$\exp\left(\frac{4r_{1}}{(n+1)^2}\right) > \frac{4r_{1}}{(n+1)^2}$ and 
 by (\ref{mahD5}) and since $|D| \geq 2^{22} (n+1)^{10} n^n $, the value $M(F)^{n (n -1)}\frac{\sqrt{3}}{256}  \, \left(\frac{\log \log n}{\log n}\right)^6$ exceeds the rest of right hand side.  If  $\frac{4r_{1}}{(n+1)^2} \leq {\frac{e}{e-1}n}^{\frac{e}{e-1}}$ then the value $M(F)^{n (n -1)}\frac{\sqrt{3}}{256}  \, \left(\frac{\log \log n}{\log n}\right)^6$ alone exceeds the right hand side.

\textbf{Remark.} To estimate the value of $A_{1}$ we proved Lemma \ref{EstimateOfHeightDelta}. Having the inequality 
\begin{equation*}
	h\left(\frac{\alpha_n-\alpha_i}{\alpha_n-\alpha_j}\right)
	\le
	2\log 2 + 4h(\alpha_n)
\end{equation*}
in hand, one may attempt to bound the logarithmic height of $\alpha$, a root of $F(x , 1) =0$, in terms of the discriminant of $F$. To do so recall that 
we have assumed that the binary form $F$ has the smallest Mahler measure among all equivalent forms that are monic. We need this assumption to obtain an upper bound for the number of small solutions (see (\ref{S60})). We also have
$$
h(\alpha) = \frac{1}{n} \log M(\alpha) \leq \frac{1}{n} \log\left( (n+1)^{1/2} H(\alpha)\right).
$$
Therefore, we can apply Proposition \ref{gyor5} to our selected form $F$ and assume that for each root $\alpha$ of $F(x , 1)= 0$, we have  
$$
h(\alpha) \leq \frac{1}{n} \log\left( (n+1)^{1/2}\exp\{ n^{4n^{12}} |D|^{6n^8}\}\right).
$$
This will provide an explicit value for $A_{1}$. Should one wish to use this to establish a contradiction similar as above, one has to start  with $5$ solutions (instead of $3$) and after the contradiction, concludes that there are at most $4$ solutions (instead of $2$)   with large $y$  related to each root.

\section{Acknowledgments}
Part of this work has been done while I was supported by Hausdorff Institute for Mathematics in Bonn. 
I would like to thank 
 Professor Yann Bugeaud, Professor Jan-Hendrik Evertse, Professor Andrew Granville, Professor K\'{a}lm\'{a}n Gy\H{o}ry 
and Professor Ryotaro Okazaki 
for their helpful suggestions  and comments.  The content of this manuscript is improved due to   the referee's  care over the details and presentation.


\end{document}